\newtheorem{dfn}{Definition}
\newtheorem{exam}{Example}
\newtheorem{rem}{Remark}
\newtheorem{rems}[rem]{Remarks}
\newtheorem{lemma}{Lemma}
\newtheorem{proposition}{Proposition}
\newtheorem{corollary}{Corollary}
\newtheorem{theorem}{Theorem}
\newcommand{\Stab}{{\rm Stab}}
\newcommand{\val}{{\rm val}}
\newcommand{\BS}{\mathord{\mathsf {BS}}}
\def\mapright#1{\smash{\mathop{\longrightarrow}\limits^{#1}}}
\newcommand{\semi}{{\,\rule[.1pt]{.4pt}{5.3pt}\hskip-1.9pt\times}}
\newcommand{\la}{\langle}
\newcommand{\ra}{\rangle}
\newcommand{\al}{\alpha}
\newcommand{\be}{\beta}
\newcommand{\alvee}{{\al^\vee}}
\newcommand{\de}{\delta}
\newcommand{\supp}{\text{supp}\,}
\newcommand{\om}{\omega}
\newcommand{\A}{\mathbb A}
\newcommand{\Xvee}{{X^\vee}}
\newcommand{\Xveep}{{X^\vee_+}}
\newcommand{\Phivee}{{\Phi^\vee}}
\newcommand{\Phiveep}{{\Phi^\vee_+}}
\newcommand{\Rvee}{{R^\vee}}
\newcommand{\Rveep}{{R^\vee_+}}
\newcommand{\Delvee}{\Delta^\vee}
\newcommand{\Waff}{{W^{{\mathfrak a}}}}
\newcommand{\sfrak}{{{\mathfrak s}}}
\newcommand{\lam}{{\lambda}}
\newcommand{\cs}{\mathcal{S}}
\newcommand{\co}{\mathcal{O}}
\newcommand{\ck}{\mathcal{K}}
\newcommand{\cg}{\mathcal{G}}
\newcommand{\cl}{\mathcal{L}}
\newcommand{\cj}{\mathcal{J}}
\newcommand{\cb}{\mathcal{B}}
\newcommand{\cp}{\mathcal{P}}
\newcommand{\cq}{\mathcal{Q}}
\newcommand{\ch}{{\mathtt{H}}}
\newcommand{\cn}{\mathcal{N}}
\newcommand{\ct}{\mathcal{T}}
\newcommand{\chara}{{\mathop{\rm Char}\,}}
\newcommand{\Ker}{{\mathop{\rm Ker}}}
\newcommand{\df}{{\mathop{\it def}}}
\newcommand{\Lie}{{\mathop{\hbox{\rm Lie}}\,}}
\newcommand{\Mor}{{{\hbox{\rm Mor}}\,}}
\newcommand{\Aut}{{{\hbox{\rm Aut}}\,}}
\newcommand{\bF}{{\mathbb F}}
\newcommand{\br}{{\mathbb R}}
\newcommand{\bz}{{\mathbb Z}}
\newcommand{\bc}{{\mathbb C}}
\newcommand{\La}{{\mathfrak{a}}}
\newcommand{\Lg}{{\mathfrak{g}}}
\newcommand{\Lo}{{\mathfrak{o}}}
 \gdef\Young(#1){\hbox{$\vcenter
 {\mathcode`,="8000\mathcode`|="8000
  \def,{\global\advance\cols by 1 &}%
  \def|{\cr
        \multispan{\the\cols}\hrulefill\cr
        &\global\cols=2 }%
  \offinterlineskip\everycr{}\tabskip=0pt
  \dimen0=\ht\strutbox \advance\dimen0 by \dp\strutbox
  \halign
   {\vrule height \ht\strutbox depth \dp\strutbox##
    &&\hbox to \dimen0{\hss$##$\hss}\vrule\cr
    \noalign{\hrule}&\global\cols=2 #1\crcr
    \multispan{\the\cols}\hrulefill\cr%
   }
 }$}}
 \gdef\Skew(#1:#2){\hbox{$\vcenter
 {\mathcode`,="8000\mathcode`|="8000
  \dimen0=\ht\strutbox \advance\dimen0 by \dp\strutbox
  \def\boxbeg{\vbox
    \bgroup\hrule\kern-0.4pt\hbox to\dimen0\bgroup\strut\vrule\hss$}%
  \def\boxend{$\hss\egroup\hrule\egroup}%
  \def,{\boxend\boxbeg}%
  \def|##1:{\boxend\vrule\egroup\nointerlineskip\kern-0.4pt
    \moveright##1\dimen0\hbox\bgroup\boxbeg}%
  \def\\##1\\##2:{\boxend\vrule\egroup\nointerlineskip\kern-0.4pt
    \kern ##1\dimen0\moveright##2\dimen0\hbox\bgroup\boxbeg}%
  \moveright#1\dimen0\hbox\bgroup\boxbeg#2\boxend\vrule\egroup
 }$}}
\begin{document}
\title{One-skeleton galleries, the path model and a generalization of Macdonald's formula for Hall-Littlewood polynomials}

\author{St\'ephane Gaussent and Peter Littelmann}

\maketitle

\emph{Institut \'Elie Cartan U.M.R. 7502, Universit\'e Henri Poincar\'e Nancy 1, Bo\^ite Postale 70239, F-54506 Vand\oe uvre-l\`es-Nancy CEDEX, France, and\\ 
\indent Mathematisches Institut, Universit\"at zu K\"oln, Weyertal 86-90, D-50931 K\"oln, Germany}

\begin{abstract}
We give a direct geometric interpretation of the path model using galleries in the $1-$skeleton of the Bruhat-Tits building 
associated to a semi-simple algebraic group. This interpretation allows us to compute the coefficients of the expansion 
of the Hall-Littlewood polynomials in the monomial basis. The formula we obtain is a ``geometric compression'' of the 
one proved by Schwer, its specialization to the case ${\tt A}_n$ turns out to be equivalent to Macdonald's formula.
\end{abstract}

\tableofcontents


\section{Introduction}
We give a 
direct geometric interpretation of the path model for representations and the associated 
Weyl group combinatorics \cite{L1}. As a consequence, we get a generalization of Macdonald's formula for Hall-Littelwood polynomials in type ${\tt A}_n$ \cite{Mac}. Our formula can be seen as a geometric
compression of Schwer's formula \cite{Schwer}. 

Concerning the connection with the path model, a first step in this direction was done in \cite{GL}. 
The advantage of the new 
approach is that galleries in the one-skeleton of the apartment can directly be identified
with piecewise linear paths running along the one-skeleton. They can be concatenated and they can also be easily translated in the language of tableaux, for classical groups. The goal now is to show that
the original approach by Lakshmibai, Musili and Seshadri \cite{LMS,LS1} towards what later became the path
model has an intrinsic geometric interpretation in the geometry of the affine Grassmannian,
respectively in the geometry of the associated affine building. Another instance of this approach can be found in the work of Kapovich and Millson \cite{KM} where they use paths in the one-skeleton in their proof of the ``saturation'' theorem.

To be more precise, let $G$ be a semisimple algebraic group
defined over $\bc$, fix a Borel subgroup $B$ and a maximal torus $T$. 
Let $U^-$ be the unipotent radical of the opposite Borel subgroup. Let $\co=\bc[[t]]$ be the ring of complex 
formal power series and let $\ck=\bc(\!(t)\!)$ be the quotient field. For a dominant coweight $\lam$ and an arbitrary
coweight $\mu$ consider the following intersection in the affine Grassmannian $G(\ck)/G(\co)$:
$$
Z_{\lam,\mu}= G(\co).\lam\cap U^-(\ck).\mu.
$$
Let $\mathbb F_q$ be the finite field with $q$ elements and replace 
the field of complex numbers by the algebraic closure $K$ of $\mathbb F_q$. 
Assume that all groups are defined and split over $\mathbb F_q$. Replace
$\ck$ by $\ck_q = \mathbb F_q(\!(t)\!)$  and $\co$ by $\co_q = \mathbb F_q[[t]]$; 
the Laurent polynomials $L_{\lam,\mu}$ defined by $L_{\lam,\mu}(q)=\vert Z_{\lam,\mu}^q\vert$ show 
up as coefficients in the Hall-Littlewood polynomial:
$P_\lam=\sum_{\mu\in\Xveep}q^{-\langle\rho,\lam+\mu\rangle} L_{\lam,\mu} m_\mu.$

We replace the desingularization of the Schubert variety  $X_\lam$ in \cite{GL} by a 
Bott-Samelson type variety $\Sigma$ which is a fibred space having as factors varieties
of the form $H/R$, where $H$ is a semisimple algebraic group and 
$R$ is a maximal parabolic subgroup. In terms of the affine building, a point in this variety
is a sequence $\delta = (P_0=G(\co),Q_0,P_1,Q_1,\ldots,P_r, Q_r, P_{r+1})$ of parahoric subgroups of $G(\ck)$ 
reciprocative contained in each other, i.e. $G(\co)\supset Q_0\subset P_1\supset Q_1\subset\ldots \supset Q_r\subset P_{r+1}$. These desingularizations are smaller than the ones used in \cite{GL}, in the sense that the fibres are of smaller dimensions. In type ${\tt A}_n$, these coincide with convolution morphisms.

In terms of the faces of the building, a point in $\Sigma$ is a sequence of closed one-dimensional faces
(corresponding to the parahoric subgroups $Q_0,\ldots,Q_r$), where successive faces have (at least) a common 
zero-dimensional face (i.e. a vertex corresponding to one of the maximal parahoric subgroups $P_0,\ldots,P_{r+1}$). 
So if the sequence is contained in an apartment, then the point in $\Sigma$ can be seen as a piecewise linear path in the apartment joining the origin with a special vertex. 

We introduce the notion of a minimal one-skeleton gallery (which always lies in some apartment)
and of a positively folded combinatorial gallery in the one-skeleton. 
The points in $\Sigma$ corresponding to the points in the open orbit $G(\co).\lam\subset X_\lam$
are exactly the minimal galleries, 
we identify those two sets. By choosing a generic 
one parameter subgroup of $T$ in the anti-dominant Weyl chamber, we get a Bia\l ynicki-Birula
decomposition of $\Sigma$, the centers $\delta$ of the cells $C_\delta$ correspond to combinatorial 
one-skeleton galleries $\delta$ (i.e. the galleries lying in the standard apartment). We
show that $C_\delta\cap G(\co).\lam\not=\emptyset$ if and only if $\delta$ is positively folded.

The Bia\l ynicki-Birula decomposition of $\Sigma$ can be used to define a decomposition 
$Z_{\lam,\mu}=\bigcup_\delta Z_{\lam,\mu}\cap C_{\delta}$, the indexing set of the strata are positively folded one-skeleton galleries. To see
the {\it geometrical compression} (compare Lenart \cite{Len}, \cite{Len2}), recall the decomposition in \cite{GL}, $Z_{\lam,\mu}^q=\bigcup S_\Delta$, where the $\Delta$'s are certain galleries of alcoves of a fixed type in the appartment. Now, fix a minimal gallery of alcoves $\Delta_\lambda$ between $0$ and $\lambda$ and a minimal one-skeleton gallery $\gamma_\lambda$ contained in $\Delta_\lambda$. This allows to build a map from the galleries of alcoves in the standard apartment to the one-skeleton ones staying in this apartment. This map  sends positively folded galleries of alcoves onto positively folded one-skeleton galleries. Note that this application [even when restricted to positively folded galleries] is surjective but not injective. Further, the pieces $S_\Delta$ group together to build the pieces $Z_{\lambda,\mu}\cap C_\delta$ of our new decomposition of $Z_{\lambda,\mu}$.

For example, in the ${\tt A}_n$-case the galleries can be translated into the language of Young tableaux, 
and the positively folded galleries ending in $\mu$ correspond exactly to the semi-standard Young tableaux of shape 
$\lam$ and weight $\mu$. In this sense, the new decomposition can be viewed as the optimal geometric decomposition
for type ${\tt A}_n$. 

To investigate the intersection $Z_{\lam,\mu}\cap C_{\delta}$ we need to {\it unfold} the 
(possibly) folded gallery $\delta$.
As a consequence of the unfolding procedure we present the formula for the coefficients
of the Hall-Littlewood polynomials, the summands below counting the number of points in the intersection of $Z_{\lam,\mu}^q\cap C_{\delta}$ for $\delta$ being positively folded and ending in $\mu$:
\par\vskip 5pt\noindent
{\bf Theorem~\ref{Lpolynomialformula}.}
\par\noindent
$$
L_{\lambda,\mu} (q) = \sum_{\delta\in\Gamma^+(\gamma_\lambda, \mu)} q^{\ell(w_{D_0})} \prod_{j=1}^r \ U_i(q),
$$ where $U_i(q)$ is a polynomial of the form $\sum_{\mathbf c\in \Gamma_{\sfrak^j_{V_j}}^+(\mathbf i_j, op)} q^{t(\mathbf c)} (q-1)^{r(\mathbf c)}$ that counts the number of points over $\mathbb F_q$ in a subvariety of a generalised Grassmannian $H/R$, where $H$ and $R$ are determined by $\delta$.

\par\vskip5pt
To get a rough idea of what this formula means without getting drowned by the technical details, let us consider
the case where $G$ is of type ${\tt A}_n$. We identify the positively folded galleries with the semi-standard Young 
tableaux of shape $\lam$ having weight $\mu$.
We use the convention that the entries in the tableau are weakly increasing in the rows and
stricly increasing in the columns, so the one dimensional faces of the gallery correspond to the 
columns of the tableau. We enumerate the columns such that the right most column is the first one.
Given such a tableau $\delta$, let $E_0,\ldots,E_r$ be the columns. We want
to investigate the set of all minimal galleries in $C_\delta$ lying in $Z_{\lam,\mu}$. 
Proposition~\ref{prMinGlobal} shows that this set has a product structure 
$$
B^- w_{D_0} Q^-_{E_0}/Q^-_{E_0}\times \prod_{j=1}^{r} Min(E_{j-1},E_j)\ ,
$$
which explains the product structure for each summand in Theorem \ref{Lpolynomialformula}. To get a minimal gallery in the building 
that lifts $\delta$, i.e. is an element of $C_\delta$ and lies in $Z_{\lam,\mu}$, the possibilities 
for the first column $E_0$ form a Schubert cell leading to the term $q^{\ell(w_{D_0})}$. For $j\ge 1$, the 
possibilities for lifts of $E_j$ depend on the column $E_{j-1}$ before. 
It can be shown that Macdonald's algorithm (see \cite{Mac}) can be expressed also columnwise.
More precisely, Klostermann \cite{Kl} has shown in the framework of her thesis that the structure of the 
second sum in the formula above in Theorem~\ref{Lpolynomialformula} can be simplified in the ${\tt A}_n$-case 
so that,  in terms of Young tableaux, the resulting algorithm is exactly the same as Macdonald's algorithm.  

The positively folded one-skeleton galleries having $q^{\langle\lam+\mu,\rho\rangle}$ as a leading
term in the counting formula for $\vert Z_{\lam,\mu}^q\cap C_{\delta}\vert$, 
are called {\it LS-galleries}; this is an abbreviation for Lakshmibai-Seshadri galleries. As in \cite{GL}, for an LS-gallery $\delta$, $\overline{Z_{\lambda,\mu}\cap C_\delta}$ is an MV-cycle.

In section~\ref{LSAndYoungTableaux} we discuss the special role of the LS-galleries
and the connection with the indexing system by generalized Young tableaux
introduced by Lakshmibai, Musili and Seshadri in a series of papers, see for example
\cite{LMS, LS1, LS2}. Recall that these papers were the background for the path model theory started in \cite{L1}.
An important notion introduced in the theory of standard monomials is the 
defining chain (\cite{LMS, LS1}, see also section~\ref{seMinimal}), which was a 
breakthrough on the way for the definition of standard monomials and generalized Young tableaux.
In the context of the crystal structure of the path theory this notion again turned up to be 
an important combinatorial tool to check whether a concatenation of paths is in the Cartan component or not.
Still, the definition had the air of an ad hoc combinatorial tool. But in the context of
Bia\l ynicki-Birula cells, the folding of a minimal gallery by the action of the torus occurs
naturally: during the limit process (going to the center of the cell) the direction 
(= the sector, see section~\ref{seMinimal}) attached to a minimal gallery
is transformed into the weakly decreasing sequence of Weyl group
elements, the defining chain for the positively folded one-skeleton gallery in the center of the cell. 

The connection between the path model theory and the one-skeleton galleries
is summarized in the following corollary. For a fundamental coweight $\om$
let $\pi_{\om_{i}}:[0,1]\rightarrow X^\vee_\br$, $t\mapsto t\om$ be the path which is just the straight line 
joining $\Lo$ with $\om$ and let $\gamma_{\om}$ be the one-skeleton gallery
obtained as the sequence of edges and vertices lying on the path (see also Example~\ref{exGalsimpleCoweight} in Section \ref{suseCombiOneSk}).
\par\vskip5pt\noindent
{\bf Corollary~\ref{gallerpathcoro}.} {\it
Write a dominant coweight $\lam=\om_{i_1}+\ldots+\om_{i_r}$ as a sum of fundamental coweights,
write $\underline{\lam}$ for this ordered decomposition. Let ${\mathcal P}_{\underline \lam}$ be the associated
path model of LS-paths of shape $\underline{\lam}$ defined in \cite{L1} having as starting path the 
concatenation $\pi_{\om_{i_1}}*\ldots*\pi_{\om_{i_r}}$. For a path $\pi$ in the path 
model denote by $\gamma_\pi$ the associated gallery in the one-skeleton of $\A$
obtained as the sequence of edges and vertices lying on the path.
The one-skeleton galleries $\gamma_\pi$ obtained in this way are precisely the LS-galleries of the same 
type as $\gamma_{\om_{i_1}}*\ldots*\gamma_{\om_{i_r}}$. 
}
\par\vskip5pt
In fact, the notion of a {\it defining chain for LS-paths}
coincides in this case with the notion of a defining chain for the associated gallery.

Since the number of the LS-galleries is the coefficient of the leading term of $L_{\lam,\mu}$,
and since $P_\lam\rightarrow s_\lam$ for $q\rightarrow\infty$, we get
as an immediate consequence of Theorem~\ref{Lpolynomialformula} 
the following character formula. In combination with Corollary~\ref{gallerpathcoro}, this provides a 
geometric proof of the path character formula, first conjectured by Lakshmibai 
(see for example \cite{LS2}) and proved in \cite{L1}:
\par\vskip 5pt\noindent
{\bf Corollary~\ref{characterformula}.} {\it $\chara V(\lam) = \sum_{\delta} e^{target(\delta)}$, where the sum runs over all
LS-galleries of the same type as $\gamma_\lam$. 
}

\medskip
The article is organized as follows: In section~\ref{sePrel} we recall 
some basic facts about the affine Grassmannian and Hall-Littlewood polynomials,
in section~\ref{AppartChamberBuilding} we recall the main facts from building theory
needed later. In section~\ref{oneskeletonsection}, we introduce the main object of this article,
the one-skeleton galleries of a fixed type, and its geometric counterpart, the Bott-Samelson
variety $\Sigma$. We give a description of the Bia\l ynicki-Birula cells of $\Sigma$. For groups of type ${\tt A}_n, {\tt B}_n, {\tt C}_n$, we establish a bijection between galleries and tableaux. In section~\ref{seMinimal} we introduce the notion of a minimal one-skeleton gallery
and of a positively folded combinatorial gallery in the one-skeleton. We show that the correspondence between galleries and tableaux restricts to a bijection between positively folded galleries and semistandard tableaux.
In section~\ref{localmin} we unfold the folded galleries locally, in section~\ref{localglobalproperties} we do this 
stepwise for the full gallery and we prove: a cell $C_\delta$ contains minimal galleries if and only if 
$\delta$ is positively folded. In subsection~\ref{formulaL} we present the formula for the coefficients
of the Hall-Littlewood polynomials.
In section~\ref{LSAndYoungTableaux} we discuss the special role of the LS-galleries
and the connection with the indexing system by generalized Young tableaux
introduced by Lakshmibai, Musili and Seshadri.

\section{Preliminaries}
\label{sePrel}
Let $G$ be a connected complex semisimple algebraic group, we fix a Borel
subgroup $B\subset G$ and a maximal torus $T\subset B$.
Let $\co=\bc[[t]]$ be the ring of complex formal power series and let $\ck=\bc(\!(t)\!)$ 
be the quotient field. Denote by $v:\ck^*\rightarrow\bz$ the standard valuation 
such that $\co=\{f\in\ck\mid v(f)\ge 0\}$. As a set, the {\it affine Grassmannian} $\cg$ is the quotient
$$
\cg=G(\ck)/G(\co).
$$ 
Note that $G(\ck)$ and $\cg$ are {\it ind}--schemes and $G(\co)$ is a group 
scheme (\cite{Ku}). The $G(\co)$-orbits in $\cg$ are finite dimensional quasi-projective
varieties.

\subsection{Schubert varieties in the affine Grassmannian}\label{SchuAndGrassmann}
We recall the classification of $G(\co)$-orbits and the associated $G(\co)$-stable Schubert varieties.
Denote by $\la\cdot,\cdot\ra$ the non--degenerate pairing between 
the character group $X:=\Mor(T,\bc^*)$ of $T$ and its group $\Xvee:=\Mor(\bc^*,T)$ of
cocharacters. Let $\Phi\subset X$ be the root system of the pair 
$(G,T)$, and, corresponding to the choice of $B$, denote $\Phi^+$ the set of positive roots, 
let $\Delta=\{\al_1,\ldots,\al_n\}$ be the set of simple roots, and let $\rho$ be half the 
sum of the positive roots.

Let $\Phivee\subset \Xvee$ be the dual root system, together with a bijection 
$\Delta\rightarrow \Delvee$, $\al\mapsto\alvee$.  We denote by $\Rveep$ the 
submonoide of the coroot lattice $\Rvee$ generated by the positive coroots $\Phiveep$.
We define on $\Xvee$ a partial order by setting $\lam\succ \nu\Leftrightarrow \lam-\nu\in\Rveep$.
Let $\Xveep$ be the cone of 
dominant cocharacters:
$$\Xveep:=\{\lam\in\Xvee \mid \la\lam,\al\ra\ge 0\,\forall\,\al\in\Phi^+ \}.$$
Given $\lam\in \Xvee$, we can view in fact $\lam$ as an element of $G(\ck)$. By abuse of notation
we write also $\lam$ for the corresponding class in $\cg$. 

Let $ev:G(\co)\rightarrow G$ be the evaluation 
maps at $t=0$ and let $\cb=ev^{-1}(B)$ 
be the corresponding Iwahori subgroup. Then
$$
\cg=\bigcup_{\lam\in \Xvee}\cb.\lam=\bigcup_{\lam\in \Xveep}G(\co).\lam
$$
We denote by $X(\lam)=\overline{\cb.\lam}$ the corresponding {\it Schubert variety}.
Let $N=N_G(T)$ be the normalizer in $G$ of the fixed maximal torus $T$, we denote
by $W$ the {\it Weyl group} $N/T$ of $G$. Note that for $\lam\in \Xveep$ we have 
$$
\overline{G(\co).\lam}=X(w_0(\lam))
$$
where $w_0$ is the longest element in the Weyl group $W$. By abuse of notation we just write
$X_\lam$ for the variety $X(w_0(\lam))$ of dimension $\langle2\lam,\rho\rangle$.

\subsection{Reduction to the simply connected case}
Let now $p:G'\rightarrow G$ be an isogeny with $G'$ being simply connected. The
 natural map $p_\co:G'(\co)\rightarrow G(\co)$ is surjective
and has the same kernel as $p$. Let $X'$ and ${X'}^\vee$ be the character group respectively
group of cocharacters of $G'$ for a maximal torus $T'\subset G'$ such that $p(T')=T$,
then $p:T'\rightarrow T$ induces an inclusion ${X'}^\vee\hookrightarrow {X}^\vee$. 

The quotient  $\Xvee/{X'}^\vee$ measures the difference between $\cg$ and the affine
grassmannian $\cg'=G'(\ck)/G'(\co)$. In fact, $\cg'$ is connected, and the connected 
components of $\cg$ are indexed by  $\Xvee/{X'}^\vee$. The natural maps 
$p_\ck:G'(\ck)\rightarrow G(\ck)$ and $p_\co:G'(\co)\rightarrow G(\co)$ induce a 
$G'(\ck)$--equivariant inclusion $\cg'\hookrightarrow \cg$, which is an isomorphism onto the 
component of $\cg$ containing the class of $1$. Now $G'(\ck)$ acts via $p_\ck$ on
all of $\cg$, and each connected component is a homogeneous space for $G'(\ck)$,
isomorphic to $G'(\ck)/\cq$ for some parahoric subgroup $\cq$ of $G'(\ck)$ which is
conjugate to $G(\co)$ by an outer automorphism.

So to study $G(\co)$--orbits on $G(\ck)/G(\co)$ for $G$ semisimple, without loss of generality
we may sometimes for convenience assume that $G$ is simply connected, but we have to 
investigate more generally $G(\co)$--orbits on $G(\ck)/\cq$ for
all parahoric subgroups $\cq\subset G(\ck)$ conjugate to 
$G(\co)$ by an outer automorphism. 

\subsection{Affine Kac-Moody groups}
In the following let $G$ be a simply connected semisimple complex algebraic group.  
The {\it rotation operation} $\gamma:\bc^*\rightarrow \Aut(\ck)$, 
$\gamma(z)\big(f(t)\big)=f(zt)$ gives rise to group automorphisms
$\gamma_G:\bc^*\rightarrow \Aut(G(\ck))$, we denote $\cl(G(\ck))$ 
the semidirect product $\bc^*\semi G(\ck)$.  The rotation operation 
on $\ck$ restricts to an operation on $\co$ and hence we have a natural
subgroup $\cl(G(\co)):=\bc^*\semi G(\co)$ (for this and the following see \cite{Ku},
Chapter 13).

Let $\hat\cl(G)$ be the affine Kac-Moody group associated to
the affine Kac--Moody algebra 
$$
\hat\cl(\Lg)=\Lg\otimes \ck\oplus\bc c\oplus \bc d,
$$
where $0\rightarrow \bc c\rightarrow \Lg\otimes \ck\oplus\bc c\rightarrow
\Lg\otimes \ck\rightarrow 0$ is the universal central extension of the
{\it loop algebra} $ \Lg\otimes \ck$ and $d$ denotes the scaling element.
We have corresponding exact sequences also on the level of groups,
i.e., $\hat\cl(G)$ is a central extension of $\cl(G(\ck))$: 
$$
1\rightarrow\bc^*\rightarrow \hat\cl(G)\mapright{\pi} \cl(G(\ck))\rightarrow 1.
$$
Denote $\cp_\co\subset \hat\cl(G)$ the ``parabolic'' subgroup $\pi^{-1}(\cl(G(\co)))$, then
\begin{equation}\label{fourgrassmann}
\cg=G(\ck)/G(\co)=\cl(G(\ck))/\cl(G(\co))=\hat\cl(G)/\cp_\co .
\end{equation} 
Let $N_\ck$ be the subgroup of $G(\ck)$ 
generated by $N$ and $T(\ck)$, let $\ct\subset\hat\cl(G)$ be the corresponding standard
maximal torus (i.e. $\pi(\ct)\supset \bc^*\semi T$) and let $\cn$ 
be its normalizer in $\hat\cl(G)$. We get two incarnations of the affine Weyl group:
$$
\Waff=N_\ck/T=\cn/\ct.
$$

So to study $G(\co)$--orbits on $G(\ck)/G(\co)$ for $G$ semisimple, without loss of generality
we may assume that $G$ is simply connected and study $\cp_\co$-orbits in 
$\hat\cl(G)/\cq$, where $\cq$ is a parabolic subgroup of the affine Kac-Moody group $\hat\cl(G)$
conjugate to $\cp_\co$ by an outer automorphism.

\subsection{Hall-Littlewood polynomials}
There is a natural action of $W$ on the group algebra
$R[\Xvee]$ with coefficients in some ring $R$. For $\mu\in \Xvee$ we denote the
corresponding basis element by $x^\mu$. The algebra of symmetric polynomials
$R[\Xvee]^W$ is the algebra of invariants under this action. There are several
classical bases known for $R[\Xvee]^W$, all indexed by dominant coweights. Two
important ones are the monomial symmetric polynomials $\{m_\lam\}_{\lam\in\Xveep}$
and the Schur polynomials $\{s_\lam\}_{\lam\in\Xveep}$.
The monomial polynomials are just the orbit sums
$m_\lam=\sum_{\mu\in W\lam} x^\mu$. The Schur polynomial $s_\lam$
is the character of the irreducible representation $V(\lam)$ of the Lie algebra
${\mathfrak g}^\vee$ of the Langlands' dual group $G^\vee$ of $G$.

Specializing the ring of coefficients $R$ to the ring $\cl := \bz[q, q^{-1}]$ of Laurent
polynomials we have another basis for $\cl[\Xvee]^W$, the Hall-Littlewood
polynomials $\{P_\lam\}_{\lam\in\Xveep}$. They are defined by
$$
P_\lam=\frac{1}{W_\lam(q^{-1})}\sum_{w\in W} w\big(x^\lam\prod_{\alpha\in\Phi^+}\frac{1-q^{-1}x^{-\alpha^\vee}}{1-x^{-\alpha^\vee}}\big)
$$
where $W_\lam\subset W$ is the stabilizer of $\lam$ and $W_\lam(t) = \sum_{w\in W_\lam} t^{\ell(w)}$.
The Hall- Littlewood polynomials interpolate between the monomial symmetric polynomials
and the Schur polynomials because $P_\lambda(1) = m_\lam$ and $P_\lam\rightarrow s_\lam$ for
$q\rightarrow\infty$.

We define Laurent polynomials $L_{\lam,\mu}$ for $\lam,\mu\in\Xveep$ by
$$
P_\lam=\sum_{\mu\in\Xveep}q^{-\langle\rho,\lam+\mu\rangle} L_{\lam,\mu} m_\mu.
$$
Since $P_\lam\rightarrow s_\lam$ for $q\rightarrow\infty$, we know that 
$q^{-\langle\rho,\lam+\mu\rangle} L_{\lam,\mu}\in\bz[q^{-1}]$.

The Hall-Littlewood polynomials are connected with the geometry of the affine Grassmannian.
Let $B^-\subset G$ be the opposite Borel subgroup and denote by $U^-$ its unipotent radical.
We are interested in the structure of the irreducible components of the intersection of the following orbits in $\cg$:
\begin{equation}
\label{intersectionMV}
Z_{\lam,\mu}:= G(\co).\lam\cap U^-(\ck).\mu \subset \cg,\quad\lam\in\Xveep,\mu\in\Xvee.
\end{equation}
For a prime power $q$ let $\bF_q$ be the {finite} field with $q$ elements, set $\ck_q:=\bF_q(\!(t)\!)$
and $\co_q:=\bF_q[[t]]$, and let $Z_{\lam,\mu}^q$ be defined as above, only $\ck$ and $\co$ being
replaced by $\ck_q$ and $\co_q$.  The Laurent polynomials $L_{\lam,\mu}$ have the following
geometric interpretation coming from the Satake isomorphism. 
\vskip 5pt\noindent 
{\bf Fact.} $\vert Z_{\lam,\mu}^q\vert = L_{\lam,\mu}(q)$.
\section{Apartments, chambers and buildings}\label{AppartChamberBuilding}
Instead of studying directly the intersection $Z_{\lam,\mu}$ in (\ref{intersectionMV}), we replace the Schubert
variety $X_\lam$ by a desingularization given by an appropriately chosen Bott-Samelson variety or {\it variety of galleries}.
In this context the $U^-(\ck)$-orbits are replaced by Bia\l ynicki-Birula cells associated to a generic anti-dominant coweight.
To describe the choice of the desingularization and get hold of the combinatorial tools to calculate
$\vert Z_{\lam,\mu}^q\vert $, we need to recall some notation from the theory of buildings.
As references we suggest  \cite{B}, \cite{BT}, \cite{R}  and/or \cite{T74}.
\subsection{Apartment}
The apartment associated to the root and coroot datum is the 
real vector space $\A = X^\vee \otimes_{\mathbb Z}\mathbb R$ together
with the hyperplane arrangement defined by the set $\{(\alpha,n)\mid  \alpha\in\Phi,n\in\bz\}$ of affine roots. 
In terms of affine Kac-Moody algebras, a couple $(\alpha,n)$ corresponds to the real affine root $\alpha+n\delta$, 
where $\delta$ denotes the smallest positive imaginary root. For an affine root $(\alpha,n)$ we write $s_{\alpha,n}:x\mapsto x-\bigl(\langle\alpha,
x\rangle+n\bigr)\,\alpha^\vee$ for the affine reflection and $\ch_{\alpha,n}=\{x\in\A
\mid\langle\alpha,x\rangle +n = 0\}$ for the corresponding affine hyperplane of fixed points,  and we write
$$
\ch^+_{\alpha,n} =\{x\in\A \mid\langle\alpha,x\rangle +n\geqslant 0\}\ ;
$$
for the corresponding closed half-space. Similarly we define the negative half space $\ch^-_{\alpha,n}$.

\subsection{Chambers, alcoves, faces and sectors}
\begin{dfn} \rm
The irreducible components of $\A-\bigcup_{\alpha\in \Phi^+}\ch_{\alpha,0}$ are called {\it open
(spherical) chambers}, the closure is called a {\it closed chamber}  or {\it Weyl chamber},
or just {\it chamber}.
The irreducible components of $\A-\bigcup_{(\alpha,n)\in \Phi^+\times\bz}\ch_{\alpha,n}$ are called
{\it open alcoves}, the closure is called a {\it closed alcove} or just an {\it alcove}.
\end{dfn}
The Weyl group $W$ and the affine Weyl group $\Waff$ can be realized in this context as follows: 
$W$ is the finite subgroup of $GL(\A)$ generated by the reflections
$s_{\alpha,0}$, $\alpha\in \Phi$, the affine Weyl group $\Waff$ is the group of affine transformations of $\A$
generated by the affine reflections $s_{\alpha,n}$, $(\alpha,n)\in \Phi\times \bz$. 
The dominant Weyl chamber
$$
C^+:=\{x\in\A\mid \forall\alpha\in\Phi^+: \langle \alpha,x\rangle \geqslant 0\}=\bigcap_{\alpha\in\Phi^+}\ch_{\alpha,0}^+
$$ is a fundamental domain for the action of $W$ on $\A$ and the {\it fundamental alcove} 
$$
\Delta_f=\{x\in \A\mid \forall\alpha\in\Phi^+: 0\le\langle \alpha,x\rangle\le 1\}=
\bigcap_{\alpha\in\Phi^+}\ch_{\alpha,0}^+\cap\bigcap_{\alpha\in\Phi,n>0}\ch_{\alpha,n}^+
$$ 
is a fundamental domain for the action of $\Waff$ on $\A$.
\begin{dfn}\rm
By a {\it face} $F$ we mean a subset of $\A$ obtained as the intersection $\bigcap_{(\beta,m)} \ch_{\beta,n}^\bullet$,
where for each pair $(\beta,n)$, $\beta\in \Phi^+,n\in \bz$, one choses $\ch_{\beta,n}^\bullet$ to be either the hyperplane, the positive or the negative
halfspace. By the corresponding
 {\it open face} $F^o$ we mean the subset of $F$ obtained when replacing
 the closed affine halfspaces in the definition of $F$ by the corresponding
 open affine halfspaces. 
\end{dfn} 
  
We call the affine span $\langle F^o\rangle_{\rm aff}=\langle F\rangle_{\rm aff}$ the 
{\it support} of the (open) face, the {\it dimension} of the face is the dimension of its support. 
A {\it wall of an alcove} is the support of a codimension one face. In general, instead of
the term hyperplane we use often the term {\it wall}, which is more common in the language of buildings.

For any subset $\Omega$ and any face $F$ contained in an apartment
$A$ of a building, we say that a
wall $\ch$ {\it separates $\Omega$ and $F$} if  $\Omega$ is
contained in a closed half space defined by $\ch$ and $F^o$ is a
subset of the opposite open half space.

We call a face of dimension one in $\A$ an {\it edge} and a face of dimension zero a {\it vertex}.
For a vertex $\nu$ let $\Phi_\nu\subset \Phi$ be the subrootsystem consisting of all roots
$\alpha$ such that $\nu\in\ch_{(\alpha,n)}$ for some integer $n$.
A vertex $\nu$ is called a {\it special vertex} if $\Phi_\nu=\Phi$. The special vertices are 
precisely the coweights for $G$ of adjoint type.

By a {\it sector} $\sfrak$ with vertex $\nu\in\A$ we mean a closed chamber translated by $\nu$, i.e.,
there exists a closed chamber ${C}$ such that
\begin{equation}
\label{sfrak1}
\sfrak:=\{\lam\in \A\mid \lam=\nu + z\text{\ for some\ }z\in {C}\}.
\end{equation}
By abuse of notation we write $-\sfrak$ for the sector
\begin{equation}
\label{sfrak2}
-\sfrak=\nu-C=\{\mu\in A\mid \mu=\nu-x\text{\ for some\ }x\in C\}.
\end{equation}
For a sector $\sfrak$ with vertex $\nu$ and an element $\mu\in \A$ let $\sfrak(\mu)$ be the sector obtained
from $\sfrak$ by translating the sector by $\mu-\nu$: If $\sfrak$ is as in (\ref{sfrak1}), then
\begin{equation}
\label{sfrak3}
\begin{array}{rcl}
\sfrak(\mu)&=&\{\lam\in \A\mid \lam=(\mu-\nu)+ z\text{\ for some\ }z\in {\sfrak}\}. \\
&=&\{\lam\in \A\mid \lam=\mu + z\text{\ for some\ }z\in {C}\} \\
\end{array}
\end{equation}
If $\mu\in \sfrak$, then obviously $\sfrak(\mu)\subset \sfrak\subset \sfrak(-\mu)$.
 \subsection{Faces, parahoric and parabolic subgroups}
 The faces in $\A$ are in bijection with parabolic subgroups of the affine
 Kac-Moody group $\hat\cl(G)$
 containing $\ct$ and parahoric subgroups in $G(\ck)$ containing $T$. 
 
 To a root vector $X_\alpha\in\Lie G$, one associates the one-parameter subgroup $U_\alpha  = \{x_\alpha(f) = exp(X_\alpha\otimes f )\mid f\in\ck\}$ of $G(\ck)$ (resp. of $\hat\cl(G)$). If $f = at^n$ for some $a\in\mathbb C$ and $n\in\mathbb Z$, then, for a fixed $n$, the set $U_{\alpha+n\delta} = \{x_\alpha(at^n)\mid a\in\mathbb C\}$ is a one-parameter subgroup associated to the real affine root $\alpha+n\delta$.
 
 \begin{dfn}\rm 
 Given a face $F$, let $\hat P_F$ be the {\it unique parabolic subgroup}
 of $\hat\cl(G)$ containing $\ct$ and all root subgroups $U_{\alpha+n\delta}$ such that $F\subset \ch^+_{\alpha,n}$. 
 
 Given a face $F$, let $U_F$ be the subgroup {of $G(\ck)$} generated by 
 all elements of the form
 $x_\alpha( f )$, where $f\in\ck^*$
 is such that $v(f)\ge n$ and $F\subset \ch^+_{\alpha,n}$.
 Let  $P_F$ be the {\it unique parahoric subgroup}
 of $G(\ck)$ containing $T$ and $U_F$.
  \end{dfn}

For example, if $F$ is a face of the fundamental alcove, then $\cb\subset P_F$. Indeed,
the fundamental alcove itself corresponds to the Iwahori subgroup $\cb\subset G(\ck)$ respectively 
the fixed Borel subgroup $\hat B$ of $\hat\cl(G)$. The origin corresponds to the parahoric subgroup $G(\co)\subset G(\ck)$
respectively the parabolic subgroup $\cp_\co\subset \hat\cl(G)$.

\subsection{The affine building}
Let $\co=\bc[[t]]$ be the ring of complex formal power series and let $\ck=\bc(\!(t)\!)$ 
be the quotient field. Let $N=N_G(T)$ be the normalizer in $G$ of the fixed maximal torus 
$T\subset G$, then  the Weyl group $W$ of $G$ is isomorphic to $N/T$. 
For a real number $r$ let $U_{\beta,r}\subset U_\beta(\ck)$ be the unipotent subgroup
$$
U_{\beta,r}=\{1\}\cup \Big\{x_\beta ( f )\mid f\in\ck^*, v(f)\ge r \Big\}.
$$
For a non-empty subset $\Omega\subset \A$ let 
$\ell_\beta(\Omega)=-\inf_{x\in \Omega} \langle\beta,x\rangle$. We attach to 
$\Omega$ a subgroup of $G(\ck)$ by setting
\begin{equation}\label{uomega}
U_\Omega:=\langle U_{\beta,\ell_\beta(\Omega)}\mid\beta\in\Phi\rangle.
\end{equation}
Let $N(\ck)$ be the subgroup of $G(\ck)$ 
generated by $N$ and $T(\ck)$. To define the affine building $\cj^{\mathfrak a}$,
let $\sim$ be the relation on $G(\ck)\times\A$ defined by:
$$
(g,x)\sim (h,y)\quad\hbox{\rm if }\exists\,n\in {N(\ck)}\ \hbox{\rm such that\ }
nx=y\ \hbox{\rm and\ }g^{-1}hn\in U_x,
$$ where $U_x = U_{\{x\}}$.
\begin{dfn}\rm
The {\it affine building} $\cj^{\mathfrak a}:=G(\ck)\times\A/\sim$ associated to $G$ is the
quotient of $G(\ck)\times\A$ by ``$\sim$''.
The building $\cj^{\mathfrak a}$ comes naturally equipped with a $G(\ck)$--action
$g\cdot(h,y):=(gh,y)$ for $g\in G(\ck)$ and $(h,y)\in \cj^{\mathfrak a}$.
\end{dfn}
The map $\A\rightarrow \cj^{\mathfrak a}$, $x\mapsto(1,x)$
is injective and $N(\ck)-$equivariant, we will identify in the following 
$\A$ with its image in $\cj^{\mathfrak a}$. More generally, a subset $A$
of $\cj^{\mathfrak a}$ is called an {\it apartment} if it is of the form
$g\A$ for some $g\in G(\ck)$. We extend in the same way the notion of a face $F$,
a sector $\sfrak$, a chamber $C$ and the notion of a parahoric subgroup $P_F$ associated to a face. Moreover, the action of $G(\ck)$ is such that the subgroup $U_{\alpha + n\delta}$ fixes the halfspace $\ch^+_{\alpha, n}$; indeed, $x_\alpha(at^n)$ belongs to $U_x$, whence, $(x_\alpha(at^n), x)\sim (1,x)$. 

We denote by $r_{-\infty} : \cj^{\mathfrak a} \to \A$ the retraction centered at $-\infty$. 
It is a chamber complex map and the fibers of $r_{-\infty}$ are the $U^-(\ck)-$orbits in 
$ \cj^{\mathfrak a}$ (see \cite{GL} Definition 8 and Proposition~1, or \cite{BT} Sections 6,7).

\subsection{Residue building}

Let $V$ be a vertex in $\cj^{\mathfrak a}$. Let $\cj^{\mathfrak a}_V$ be the set of all faces $F$ in $\cj^{\mathfrak a}$ such that 
$F\supset V$. Following Bruhat and Tits in Remark 4.6.35 of \cite{BT2}, one endows $\cj^{\mathfrak a}_V$ 
with the complex simplicial structure given by the relation $F\subset F'$, for two faces containing $V$. 
Further, let $\ch_V$ be the connected reductive subgroup of $G$ with root system $\Phi_V$. Then, 
Theorem 4.6.33 of {\it loc. cit.} shows that the structure of a spherical building on the set of all parabolic 
subgroups of $\ch_V$ is isomorphic to the one on $\cj^{\mathfrak a}_V$. 

This isomorphism restricts to any apartment and implies that if $A$ is an apartment in 
$\cj^{\mathfrak a}$, then the set $A_V$
of all faces $F\supset V$ contained in $A$ is an apartment in $\cj^{\mathfrak a}_V$. The simplicial structure 
on $A_V$ is the one associated to the Coxeter complex given by the spherical group $W^v_V$. The latter 
is the subgroup of $W$ generated by the reflections along $\Ker (\alpha)$, for all $\alpha\in\Phi_V$.
\vskip 5pt\noindent
{\bf Notation.}
The set $\cj^{\mathfrak a}_V$, endowed with this structure, is called the {\it residue building of $\cj^{\mathfrak a}$ at $V$}. 
The group $\ch_V$ acts transitively on the set of pairs $(C_V\subset A_V)$ of a chamber in an apartment in $\cj^{\mathfrak a}_V$.
\vskip 5pt
For any face $F$ of $\cj^{\mathfrak a}$ containing $V$, we denote the associated face in $\cj^{\mathfrak a}_V$ by $F_V$. 
Given a sector $\sfrak = V+ C$ in $\A$ with vertex $V$, one associates the chamber $\sfrak_V$ of $\A_V$ 
in the following way: let $\Delta\supset V$ be the unique alcove in $\A$ such that
$\Delta^o\cap\sfrak^o\not=\emptyset$, then $\sfrak_V:=\Delta_V$. By abuse of notation, 
$-\sfrak_V$ will denote the chamber associated to $(V-C)$. Let $C_V^\pm$ denote the 
positive (resp. negative) chamber in $\A_V$ associated to $V + C^\pm$. The stabilizers of 
$C_V^\pm$ in $\ch_V$ are opposite Borel subgroups, denoted by $B_V^\pm$.

Let now $V\subset F$ be a one-dimensional face containing a vertex in $\cj^{\mathfrak a}$. Let
$P_V\supset P_F$ be the parahoric subgroups associated to $V$ and $F$, then
$P_V/P_F$ is isomorphic to a Grassmannian $\ch_V/Q_F$ where $Q_F\supset B_V$ is the 
maximal parabolic subgroup in $\ch_V$ associated to the simple root $\al_F$ defined by the type of $F_V$. 
\section{One-skeleton galleries}\label{oneskeletonsection}
Roughly speaking, a {\it one-skeleton gallery} is a sequence of edges in $\cj^{\mathfrak a}$,
two subsequent ones having a common vertex. A {\it combinatorial one-skeleton gallery} is essentially a gallery that stays in the apartment $\A$. We will see that the set of one-skeleton galleries
of fixed type inherits in a natural way the structure of a Bott-Samelson variety $\Sigma$ and provides
the desired desingularization of the Schubert variety $X_\lam$ (see Proposition~\ref{pr:BS}).
The combinatorial one-skeleton galleries correspond precisely to the centers of 
Bia\l ynicki-Birula cells (Section~\ref{suseCells}) for the smooth variety $\Sigma$.
\subsection{Combinatorial one-skeleton galleries}\label{suseCombiOneSk}
\begin{dfn}\rm 
We call a sequence 
$\gamma = (V_0\subset E_0\supset V_1 \subset E_1 \supset \cdots \supset
V_r\subset E_r\supset V_{r+1})$ of faces in $\A$  
a {\it combinatorial one-skeleton} gallery if
\begin{itemize}
\item the faces $V_i$, $i=0,\ldots,r+1$, are vertices in $\A$; 
\item  the vertex $V_0$ (the {\it source} of the gallery) and the vertex $V_{r+1}$
(the {\it target} of the gallery) are special vertices;
\item the faces $E_i$, $i=0,\ldots,r$, are edges in $\A$.
\end{itemize}
\end{dfn}
If $\gamma' =(V'_0\subset E'_0\supset \cdots\subset E'_t\supset
V'_{t+1})$ is another one-skeleton gallery such that $V'_0 = V_{r+1}$, then one
can concatenate the two galleries to get a new one:
$$
\gamma * \gamma' = (V_0\subset E_0\supset  \cdots \supset V_r
\subset E_r\supset V_{r+1} =
V'_0\subset E'_0\supset \cdots\subset E'_t\supset V'_{t+1}).
$$
By abuse of notation we often write $\gamma * \gamma'$ even if $V'_0 \not= V_{r+1}$. In this case, we mean the concatenation of  $\gamma$ with the displaced gallery $\gamma'+(V_{r+1}-V'_0)$. This construction
makes sense since, by assumption, $V'_0$ and $V_{r+1}$ are special vertices.
\begin{exam}\label{exGalsimpleCoweight}\rm
Suppose $G$ is simple, of adjoint type and $\omega$ is a fundamental coweight. Let $\br_{\ge 0}\omega\subset\A$ 
be the extremal ray of the dominant Weyl chamber $C^+$ spanned by $\omega$. Set $V_0=\Lo$ 
and let $E_0$ be the unique face of dimension one in the intersection of $\br_{\ge 0}\omega$ with the fundamental alcove.
If the second vertex $V_1$ of $E_0$ is different from $\omega$, 
then let subsequently $E_i$ be the unique dimension one face in $\br_{\ge 0}\omega$ (different from $E_{i-1}$) having
$V_i$ as a common vertex with $E_{i-1}$. We obtain a one-skeleton gallery 
$$
\gamma_\omega=(V_0=\Lo\subset E_0\supset V_1 \subset \cdots \supset
V_r\subset E_r\supset \omega=V_{r+1})
$$ {\it joining} $\Lo$ with $\omega$.
We refer to these kind of galleries as {\it fundamental galleries},
the faces $E_j$ of such a gallery are called {\it fundamental faces} (although, they might not be contained in the fundamental alcove).
\end{exam}
\begin{exam} \label{exGalCoweightarbifreesequence}\rm 
Let $\lam$ be an arbitrary dominant coweight. We call a one-skeleton gallery $\gamma=
(V_0\subset E_0\supset  \cdots \subset E_r\supset V_{r+1})$
a {\it dominant combinatorial gallery joining $\Lo$ and $\lam$ along the coweight lattice} if
$\gamma=\gamma_{\om_{i_1}}*\gamma_{\om_{i_2}}*\cdots*\gamma_{\om_{i_r}}$ is a 
concatenation of fundamental galleries such that $\sum_{j=1}^s \om_{i_j}=\lam$.
\end{exam} 
\begin{exam} \label{exGalCoweightarbi}\rm
If we have fixed an enumeration $\omega_1,\ldots,\omega_n$ of the fundamental coweights 
and $\lam=\sum a_i\omega_i$, then we write $\gamma_{\underline \lam}$ for the gallery
$\gamma_{a_1\om_{1}}*\cdots*\gamma_{a_n\om_{n}}$ joining $\Lo$ and $\lam$.
\end{exam} 
\begin{exam} \label{exGalCoweightarbicompletelyfree}\rm
Let $\lam$ be again an arbitrary dominant coweight. We call a one-skeleton gallery $\gamma=
(V_0\subset E_0\supset  \cdots \subset E_r\supset V_{r+1})$
a {\it dominant combinatorial gallery joining $\Lo$ and $\lam$} if the source is $\Lo$,
the target is $\lam$ and all the faces $E_j$ are displaced fundamental faces.
\end{exam}
\begin{dfn}\label{defType}
\rm
Let $\cs^\La$ be the set of affine roots $(\alpha,n)$ such that $\Delta_f\cap\ch_{(\alpha,n)}$
is a face of codimension one. Given a face $F$ of the fundamental alcove $\Delta_f$, we call 
$\cs^\La(F):=\{(\alpha,n)\in \cs^\La\mid F\subset \ch_{(\alpha,n)}\}$ the {\it type} of $F$.
Given an arbitrary face $F\subset \A$, there exists a unique face $F^f$ of the fundamental alcove
which is $\Waff$-conjugate to $F$. We set $\cs^\La(F):=\cs^\La(F^f)$ and call this the type of $F$.\end{dfn}

\begin{dfn}\label{gallerytype}\rm
Given a combinatorial one-skeleton gallery 
$\gamma=(V_0\subset E_0\supset V_1 \subset  \cdots \supset V_r\subset E_r\supset V_{r+1})$,
we call the sequence
$$
t_\gamma:=(\cs^\La(V_0)\supset \cs^\La(E_0)\subset \cs^\La(V_1)\supset\ldots\subset \cs^\La(V_r)
\supset \cs^\La(E_r)\subset \cs^\La(V_{r+1}))
$$
the gallery of types or the type of $\gamma$. We denote by $\Gamma(t_\gamma, V_0)$ the set of {\it all combinatorial galleries starting in $V_0$ and
having $t_\gamma$ as type}.
\end{dfn}
\vskip 5pt\noindent
{\bf Notation.}
Because a face $F$ is always contained in an apartment $A = g\A$, the notion of a {\it one-skeleton gallery}, 
of the {\it type of a face} and the {\it type of a gallery} extends to the whole building $\cj^{\mathfrak a}$. 
\vskip 5pt
Let $W_{V_i}\subset\Waff$ be the Weyl group of $P_{V_i}$, i.e., $W_{V_i}$ is the stabilizer of the vertex $V_i$,
and let $W_{E_i}\subset\Waff$ be the Weyl group of $P_{E_i}$, i.e., $W_{E_i}$ is the stabilizer of the edge $E_i$.
\begin{lemma}\label{weylgroupclasssequence}
Let $\gamma=(V_0\subset E_0\supset V_1 \subset  \cdots \supset V_r\subset E_r\supset V_{r+1})$ be 
a combinatorial one-skeleton gallery.
The set $\Gamma(t_\gamma, V_0)$ can be identified with sequences of Weyl group 
classes in $\prod_{i=0}^r W_{V_i}/W_{E_i}$
via the map
$$
\begin{array}{rcl}
(w_0,\ldots,w_r)\mapsto &(V_0\subset w_0(E_0)\supset w_0(V_1) \subset  w_0w_1(E_1)\supset \cdots\hfill\\
& \hskip 80pt \cdots \subset w_0\cdots w_r(E_r)\supset w_0\cdots w_r(V_{r+1})).
\end{array}
$$ 
In particular, the set $\Gamma(t_\gamma, V_0)$ is finite.
\end{lemma}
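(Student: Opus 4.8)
The plan is to argue by induction on the length $r$, the essential geometric input being the transitivity properties of the residue building recalled above. First I would record two elementary facts about the reference gallery $\gamma$. Since each $W_{E_i}$ is the pointwise stabilizer of the edge $E_i$ and $V_i$ is an endpoint of $E_i$, the walls $\ch_{\alpha,n}$ through $E_i$ form a subset of those through $V_i$; hence $W_{E_i}\subseteq W_{V_i}$, and likewise $W_{E_i}\subseteq W_{V_{i+1}}$, so the quotients $W_{V_i}/W_{E_i}$ make sense and every element of $W_{E_i}$ fixes $V_{i+1}$. Second, because $\A$ is a type-coloured Coxeter complex, the two endpoints of any edge carry distinct types; consequently, in any gallery of type $t_\gamma$ the vertices are forced by the edges, so it suffices to keep track of the edges.

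The key step is the local statement at a single vertex, which is exactly the base case $r=0$: the edges of $\A$ of type $\cs^\La(E_0)$ containing $V_0$ form a single orbit under $W_{V_0}$, with the stabilizer of $E_0$ equal to $W_{E_0}$. This follows from the description of the residue building $\cj^{\mathfrak a}_{V_0}$: the apartment $\A_{V_0}$ is the Coxeter complex of the finite reflection group $W^v_{V_0}\cong W_{V_0}$, on which $W_{V_0}$ acts with the usual transitivity on faces of a fixed type and with parabolic stabilizers. Thus the type-$\cs^\La(E_0)$ edges at $V_0$ are in bijection with $W_{V_0}/W_{E_0}$ via $w_0W_{E_0}\mapsto w_0(E_0)$, and the second vertex is then necessarily $w_0(V_1)$ by the coloring remark.

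For the inductive step I would split a gallery of type $t_\gamma$ as its first edge $(V_0\subset E_0''\supset V_1'')$ followed by a gallery $\delta$ of type $t_{\gamma'}$ starting at $V_1''$, where $\gamma'=(V_1\subset E_1\supset\cdots\supset V_{r+1})$. The base case gives a unique class $w_0W_{E_0}$ with $E_0''=w_0(E_0)$ and $V_1''=w_0(V_1)$; applying $w_0^{-1}$ carries $\delta$ to a gallery of type $t_{\gamma'}$ starting at $V_1$, to which the induction hypothesis associates classes $(w_1,\ldots,w_r)$ with edges $w_1(E_1),w_1w_2(E_2),\ldots$. Translating back by $w_0$ reproduces exactly the faces $w_0\cdots w_i(E_i)$ of the displayed formula; conversely every choice of classes yields, by the incidence checks $w_i(V_i)=V_i$ together with the $\Waff$-invariance of types, a genuine gallery of type $t_\gamma$ starting at $V_0$. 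This gives the asserted bijection once coset representatives $w_i\in W_{V_i}$ are fixed: the inverse reads the classes off from left to right, each $w_iW_{E_i}$ being recovered from the $i$-th edge after undoing the previously chosen $w_0,\ldots,w_{i-1}$.

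Finally, finiteness is immediate: through the point $V_i$ pass only finitely many of the hyperplanes $\ch_{\alpha,n}$ (at most one per root $\alpha$), so $W_{V_i}$ is a finite reflection group and each factor $W_{V_i}/W_{E_i}$ is finite; hence so is $\prod_{i=0}^r W_{V_i}/W_{E_i}$, and therefore $\Gamma(t_\gamma,V_0)$. I expect the main obstacle to be precisely the coset bookkeeping in the inductive step: a change of representative for an early $w_i$ alters all later faces and must be absorbed by a compensating change in $w_{i+1}$, which is legitimate exactly because $W_{E_i}\subseteq W_{V_{i+1}}$. The transitivity statement itself is essentially handed to us by the residue-building description, so the real care lies in verifying that the successive classes are well defined modulo the $W_{E_i}$ and in keeping the products $w_0\cdots w_i$ consistent throughout.
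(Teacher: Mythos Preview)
Your proof is correct and follows essentially the same inductive strategy as the paper: both argue that at each vertex the edges of a fixed type form a single orbit under the local finite reflection group $W_{V_i}$ with stabilizer $W_{E_i}$, and then proceed by induction on the length of the gallery. The paper's proof is extremely terse---it simply notes that $E_0$ and $E_0'$ of the same type must be $W_{V_0}$-conjugate and then says ``proceeding by induction''---whereas you have carefully spelled out the coset bookkeeping (in particular the inclusion $W_{E_i}\subseteq W_{V_{i+1}}$ needed to make the map well defined on cosets) and the explicit translation by $w_0^{-1}$ in the inductive step.
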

\vskip 3pt\noindent
{\it Proof.} Let $\gamma'=(V_0\subset E_0'\supset V_1'\subset E_1'\supset V_2'\supset\ldots)$ be a one-skeleton 
gallery in $\Gamma(t_\gamma, V_0)$.
Since the type of $E_0$ and $E_0'$ are the same, the two have to be conjugate by the finite 
reflection group $W_{V_0}$ generated
by all affine reflections $s_{\alpha,n}$ such that $V_0\subset\ch_{\alpha,n}$. Proceeding by induction, we see that the map
defined above is a bijection. Therefore
\begin{equation}
\label{numberofgalleries}
\vert \Gamma(t_\gamma, V_0)\vert= \sum_{i=1}^r \vert W_{V_i}/W_{E_i}\vert,
\end{equation}in particular, the set $\Gamma(t_\gamma, V_0)$ is finite.
\qed

\subsection{Young tableaux and one-skeleton galleries for classical groups of type ${\tt A}_n,{\tt B}_n,{\tt C}_n$}\label{Youngcombi}


Throughout this section
we use  the Bourbaki enumeration of the weights and coweights. 
Given a partition $p=(p_1,\ldots,p_n)$, the associated {\it Young diagram} of 
shape $p$ consists of left justified rows of boxes with 
$p_1$ boxes in the first row, $p_2$ boxes in the second row, etc.
We enumerate the rows from top to bottom ($R_1,\ldots$) and the columns from the right to the left
($C_1,\ldots$).
\par\noindent
{\bf Young tableaux of type ${\tt A_n}$:}
For a dominant coweight $\lambda = \sum_{i=1}^n a_i\omega_i$ set $p_i=a_i+\ldots+a_n$,
we call $p_{\underline{\lambda}}=(p_1,\ldots,p_n)$ the associated partition.
By a {\it Young tableau ${\mathcal T}$ of shape $p_{\underline{\lambda}}$ and type ${\tt A_n}$}
we mean a filling of the boxes of the Young diagram of shape $p_{\underline{\lambda}}$
with positive integers such that the entries are smaller or equal to $n+1$ and
the entries are strictly increasing in the columns (top to bottom). The tableau is
called {\it semistandard} if in addition the entries are weakly increasing in the rows (left to right).
\par
We use the linearly ordered alphabet ${\frak N}=\{1<2<\ldots<n<\bar n<\ldots<\bar2<\bar 1\}$
with the convention $\bar{\bar i}=i$.
\par\noindent
{\bf Young tableaux of type ${\tt B_n}$:}
For a dominant coweight $\lambda = \sum_{i=1}^n a_i\omega_i$ set $p_i=2a_i+\ldots+2a_{n-1}+a_n$,
we call  $p_{\underline{\lambda}}=(p_1, \ldots,p_n)$ the associated partition.
By a {\it Young tableau of shape $p_{\underline{\lambda}}$ and type ${\tt B_n}$}
we mean a filling of the boxes of the Young diagram of shape $p_{\underline{\lambda}}$ with elements of 
${\frak N}$ such that the entries are strictly increasing in the columns (top to bottom), 
and $i$ and $\bar i$ are never entries in the same column.
Further, for each pair of columns $(C_{2j-1},C_{2j})$, $j=1,\ldots,a_1+\ldots+a_{n-1}$,
either $C_{2j-1}=C_{2j}$ or the column $C_{2j}$ is obtained from $C_{2j-1}$ by exchanging
some of the entries $k$, $1\le k\le \bar1$, in $C_{2j-1}$ by $\bar k$. The tableau is
called {\it semistandard} if in addition the entries are weakly increasing in the rows (left to right).
\par\noindent
{\bf Young tableaux of type ${\tt C_n}$:}
For a dominant coweight $\lambda = \sum_{i=1}^n a_i\omega_i$ set $p_1=a_1+\sum_{j=2}^n 2a_j$
and for $i\ge 2$ set $p_i=2a_i+\ldots+2a_n$,
we call $p_{\underline{\lambda}}=(p_1, \ldots,p_n)$ the associated partition.
By a {\it Young tableau of shape $p_{\underline{\lambda}}$ and type ${\tt C_n}$}
we mean a filling of the boxes of the Young diagram of shape $p_{\underline{\lambda}}$ 
with elements of ${\frak N}$, strictly increasing in the columns (top to bottom), but $i$ and $\bar i$ are never entries in the same column.
Further, for each pair of columns $(C_{a_1+2j-1},C_{a_1+2j})$, $j=1,\ldots,a_2+\ldots+a_n$,
either $C_{a_1+2j-1}=C_{a_1+2j}$ or the column $C_{a_1+2j}$ is obtained from $C_{a_1+2j-1}$ by exchanging for an even number of times
an entry $k$, $1\le k\le \bar1$, in $C_{a_1+2j-1}$ by $\bar k$. 
The tableau is called {\it semistandard} if in addition the entries are weakly increasing in the rows (left to right).
\begin{exam}
\label{exTableaux}
The following tableaux are semistandard Young tableaux of shape $p_{\underline{\lambda}}$ for 
$\lambda=\omega_1+\omega_2+\omega_3$:
$$
\text{type ${\tt A}_3$:}\ \Young(1,1,3|2,3|3),\quad
\text{type ${\tt B}_3$:}\ \Young(1,1,2,2,\bar 2|2,2,\bar 1|\bar 3),\quad
\text{type ${\tt C}_3$:}\ \Young(1,1,2,3,\bar 3|2,\bar 3,\bar 3,\bar 2|3,\bar 2).
$$
\end{exam}
\par\noindent
{\bf The one-skeleton gallery $\gamma_{\omega_i}$:}
\par\noindent
$(1)$ If $\omega_i$ is a {\it minuscule fundamental coweight}
(i.e. $i$ is arbitrary for type ${\tt A}_n$, $i=1$ for type ${\tt C}_n$, $i=n$ for type ${\tt B}_n$),
then $\gamma_{\om_i}=(\Lo\subset E\supset \omega_i)$,
where $E$ is the closed face $\{t\omega_i\mid t\in[0,1]\}$. The galleries of the same type as $\gamma_{\om_i}$ are the galleries
$\gamma_{\sigma(\om_i)}=(\Lo\subset \sigma(E)\supset \sigma(\omega_i))$, where $\sigma(E)= \{t\sigma(\omega_i)\mid t\in[0,1]\}$ and 
$\sigma\in W/W_{\omega_i}$. It follows that the gallery is completely determined by the weight $\sigma(\omega_i)$. 
\par\noindent
$(2)$ If $\omega_i$ is a not a minuscule fundamental weight, then $\langle \omega_i,\beta^\vee\rangle\le 2$
for all positive roots (because we consider only groups of classical type), and there exists at least one root such 
that $\langle \omega_i,\beta^\vee\rangle=2$. Hence 
$\gamma_{\om_i}=(\Lo\subset E_1\supset V\subset E_2\supset \omega_i)$, where $V=\frac{1}{2}\om_i$,
$E_1$ is the closed face $\{t\omega_i\mid t\in[0,\frac{1}{2}]\}$ and $E_2$ is the displaced face 
$E_1+\frac{1}{2}\om_i=\{t\omega_i\mid t\in[\frac{1}{2},1]\}$. The galleries of the same type having $E_1$ as a first
one-dimensional face are of the form $\gamma=(\Lo\subset E_1\supset V\subset \sigma(E_1)+\frac{1}{2}\om_i\supset \frac{\om_i+ \sigma(\om_i)}{2})$,
where $\sigma$ is an element in the subgroup $W^v_V$ of $W$ generated by the simple reflection $s_{\alpha_j}$, $j\not=i$, and the reflection
$s_\beta$, where $\beta$ is the dominant short root. An arbitrary one-skeleton gallery of the same type as $\gamma_{\om_i}$
is of the form $\gamma=(\Lo\subset \tau(E_1)\supset \frac{\tau(\om_i)}{2}\subset \tau\sigma(E_1)+\frac{1}{2}\tau(\om_i)\supset  
\frac{\tau(\om_i)+ \tau\sigma(\om_i)}{2})$,
where $\tau\in W$ and $\sigma$ is a above. So the gallery is completely determined by the pair of weights 
$\tau(\om_i)$ and $\tau\sigma(\om_i)$,
\par\noindent
{\bf Weights, one column tableaux and two column tableaux:}
\par\noindent
We encode the Weyl group conjugates of a fundamental coweight in a tableau consisting of one
column. Then $\omega_i=\epsilon_1+\ldots+\epsilon_i$
for $i=1,\ldots,n$, except for type ${\tt B}_n$, in this case $\omega_n=\frac{1}{2}(\epsilon_1+\ldots+\epsilon_n)$.
To have a uniform notation, for $1\le i\le n$ we write $\epsilon_{\bar i}$ for $-\epsilon_{i}$. In type ${\tt A}_n$
we have $W\cdot \omega_i=\{\epsilon_{j_1}+\ldots+\epsilon_{j_i}\mid 1\le i_1<\ldots<j_i\le n+1\}$.
By writing the indices $j_1<\ldots<j_i$ as entries in a Young tableaux of shape $p_{\underline{\om_i}}$ we get a bijection
between the elements in the orbit and 
the Young tableaux of shape $p_{\underline{\om_i}}$ and type ${\tt A}_n$.

In type ${\tt C}_n$ and ${\tt B}_n$ we have
$
W\cdot \omega_i=\{\epsilon_{j_1}+\ldots+\epsilon_{j_i}\mid 
1\le j_1<\ldots<j_i\le \bar 1,\ \forall\, k,\ell: j_k\not=\bar{j_\ell}\},
$
except for $\omega_n$ in type ${\tt B}_n$, in this case $W\cdot \omega_n
=\{\frac{1}{2}(\epsilon_{j_1}+\ldots+\epsilon_{j_n})\mid 
1\le j_1<\ldots<j_n\le \bar 1, \forall\, k,\ell:j_k\not=\bar{j_\ell}\}$.
So by writing the indices as entries in a Young tableaux consisting of one column with $i$ boxes,
this provides a bijection between the orbit $W\cdot\omega_i$ and 
the one column Young tableaux satisfying the column conditions in the definition of Young tableaux of type ${\tt B}_n$ and ${\tt C}_n$.
In particular:
\begin{lemma}
If $\omega_i$ is minuscule, then this correspondence gives a bijection between the galleries of the same type as 
$\gamma_{\omega_i}$ and the set of Young tableaux of shape $p_{\underline{\omega_i}}$ and type ${\tt A}_n$ respectively 
${\tt B}_n$ or ${\tt C}_n$.
\end{lemma}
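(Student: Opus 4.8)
The plan is to obtain the asserted bijection as the composition of two correspondences already set up in this subsection, so that essentially no new work is needed beyond identifying the intermediate object. First I would invoke part $(1)$ of the description of $\gamma_{\omega_i}$: when $\omega_i$ is minuscule the gallery $\gamma_{\omega_i}=(\Lo\subset E\supset\omega_i)$ has a single edge, and every gallery of the same type has the form $\gamma_{\sigma(\omega_i)}=(\Lo\subset\sigma(E)\supset\sigma(\omega_i))$ with $\sigma\in W/W_{\omega_i}$, hence is completely determined by its target $\sigma(\omega_i)$. Sending such a gallery to its target therefore gives a bijection between the galleries of the same type as $\gamma_{\omega_i}$ and the orbit $W\cdot\omega_i$.

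Next I would compose this with the encoding of $W\cdot\omega_i$ by one-column tableaux described in the paragraph on weights and one-column tableaux. The key observation making the hypothesis \emph{minuscule} effective is that $p_{\underline{\omega_i}}$ is then a single column: in type ${\tt A}_n$ one computes $p_1=\cdots=p_i=1$ and $p_j=0$ for $j>i$, while $\omega_1$ in type ${\tt C}_n$ and $\omega_n$ in type ${\tt B}_n$ give the shapes $(1)$ and $(1,\ldots,1)$ of length $n$. Writing $\sigma(\omega_i)=\epsilon_{j_1}+\cdots+\epsilon_{j_i}$ (respectively $\tfrac12(\epsilon_{j_1}+\cdots+\epsilon_{j_n})$ for $\omega_n$ in type ${\tt B}_n$) and recording the strictly increasing indices $j_1<j_2<\cdots$ in the boxes of this column is, by that paragraph, a bijection onto the one-column fillings satisfying the relevant column conditions.

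Finally I would check that these one-column fillings are exactly the Young tableaux of shape $p_{\underline{\omega_i}}$ and the given type. The column conditions — strict increase from top to bottom, and in types ${\tt B}_n$ and ${\tt C}_n$ the requirement that $k$ and $\bar k$ never lie in the same column — are precisely the conditions imposed by the definition of a tableau of that type, whereas the extra pairing conditions on successive pairs of columns $(C_{2j-1},C_{2j})$ are vacuous because there is only one column. Composing the two bijections then gives the statement. The only point that really requires attention, and the main (if minor) step of the argument, is this last reduction of the full defining conditions of a type-${\tt B}_n$ or type-${\tt C}_n$ tableau to the single-column conditions; once the shape has been seen to be a single column, the rest is a direct composition of correspondences already in hand.
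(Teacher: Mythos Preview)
Your proposal is correct and matches the paper's approach exactly: the paper states this lemma immediately after the two ingredients you compose (part $(1)$ on minuscule galleries and the one-column encoding of $W\cdot\omega_i$) with the words ``In particular:'', offering no further proof. You have simply made explicit the composition the paper leaves implicit, including the check that the pairing conditions on column pairs are vacuous because the relevant index ranges $j=1,\ldots,a_1+\cdots+a_{n-1}$ (type ${\tt B}_n$) and $j=1,\ldots,a_2+\cdots+a_n$ (type ${\tt C}_n$) are empty for the minuscule coweights in question.
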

Suppose  $\omega_i$ is not minuscule and 
\begin{equation}\label{nonmini}
\gamma=(\Lo\subset \tau(E_1)\supset 
V=\frac{\tau(\om_i)}{2}\subset \tau\sigma(E_1)+\frac{1}{2}\tau(\om_i)\supset  \frac{\tau(\om_i)+ \tau\sigma(\om_i)}{2})
\end{equation}
is a gallery of the same type as $\gamma_{\omega_i}$. If $\tau=id$, then $W^v_V$ is a subgroup of $W$ of type ${\tt B_i}\times {\tt B_{n-i}}$
for type ${\tt B}_n$ and of type ${\tt D_i}\times {\tt C}_{n-i}$ for type ${\tt C}_n$, we write $W^v_V=(W_V^1)^v\times (W_V^2)^v$ for this product decomposition.
We have seen above that the possible choices for $\sigma(E_1)$ is in bijection with the orbit $W^v_V\cdot \omega_i$. 
The second part in the product decomposition of $W^v_V$ is in the stabilizer of $\omega_i$, so the possible choices for $\sigma(E_1)$ are in bijection 
with the orbit $(W_V^1)^v\cdot\omega_i$. The weights occurring in this orbit are twice the weights of the $Spin_{2i+1}$ respectively $Spin_{2i}$ representation,
i.e. the weights are obtained from $\omega_i=\epsilon_1+\ldots+\epsilon_i$ just by a change of the signs in the ${\tt B}_n$-case and by
an even number of sign changes in the ${\tt C}_n$-case. Now if $\gamma$ is an arbitrary gallery  of the same type as 
$\gamma_{\omega_i}$ as in $(\ref{nonmini})$, by linearity the weight $\tau\sigma(\om_i)$ is obtained from the weight 
$\tau(\om_i)$ by a change of some signs
in the ${\tt B}_n$-case respectively by an even number of sign changes in the ${\tt C}_n$-case. So attach to $\gamma$ the (two column) 
tableau of shape $p_{\underline{\omega_i}}$ having as first column the one corresponding to $\tau(\omega_i)$ and as second the one corresponding
to $\tau\sigma(\om_i)$. It follows: 
\begin{lemma} If $\omega_i$ is not minuscule,
then this correspondence describes a bijection between the galleries of the same type as $\gamma_{\omega_i}$ and the set of
Young tableaux of shape $p_{\underline{\omega_i}}$ and type ${\tt B}_n$ respectively ${\tt C}_n$.
\end{lemma}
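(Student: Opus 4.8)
The plan is to show that the assignment $\gamma\mapsto{\mathcal T}$, sending a gallery of the same type as $\gamma_{\om_i}$ to the two-column tableau whose first column encodes $\tau(\om_i)$ and whose second encodes $\tau\sigma(\om_i)$, is a well-defined bijection onto the Young tableaux of shape $p_{\underline{\om_i}}$ and type ${\tt B}_n$ (resp. ${\tt C}_n$). I would first record that for a non-minuscule $\om_i$ the partition $p_{\underline{\om_i}}$ has all parts equal to $2$, with exactly $i$ of them; hence a tableau of this shape consists of precisely two columns of height $i$, and being of type ${\tt B}_n$ (resp. ${\tt C}_n$) means that each column satisfies the column conditions and that the ordered pair of columns satisfies the pairing condition of the respective definition.

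Well-definedness has essentially been carried out in the discussion preceding the statement: both $\tau(\om_i)$ and $\tau\sigma(\om_i)$ lie in $W\cdot\om_i$ since $\tau,\tau\sigma\in W$, so by the already established bijection between $W\cdot\om_i$ and the one-column tableaux obeying the column conditions, each of the two columns of ${\mathcal T}$ is legitimate. That the pair satisfies the pairing condition is exactly the assertion that $\tau\sigma(\om_i)$ arises from $\tau(\om_i)$ by sign changes---arbitrarily many in type ${\tt B}_n$, an even number in type ${\tt C}_n$---which follows by applying $\tau$ to the orbit description of $(W_V^1)^v\cdot\om_i$ recalled just before the lemma.

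For injectivity I would simply note that a gallery of this type is completely determined by the pair of weights $(\tau(\om_i),\tau\sigma(\om_i))$, as exhibited in $(\ref{nonmini})$, and that the weight-to-column correspondence is a bijection; hence distinct galleries produce distinct tableaux. For surjectivity I would start from an arbitrary Young tableau ${\mathcal T}$ of the given shape and type. Its first column determines a weight $\mu_1\in W\cdot\om_i$; fix $\tau\in W$ with $\tau(\om_i)=\mu_1$, so that $V=\frac12\mu_1$ and $W^v_V$ are determined. Its second column determines $\mu_2\in W\cdot\om_i$, and by the pairing condition $\mu_2$ differs from $\mu_1$ by (an even number of, in type ${\tt C}_n$) sign changes. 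Applying $\tau^{-1}$, the weight $\tau^{-1}\mu_2$ differs from $\om_i$ by the same sign changes, so by the orbit description it lies in $(W_V^1)^v\cdot\om_i\subset W^v_V\cdot\om_i$; writing $\tau^{-1}\mu_2=\sigma(\om_i)$ with $\sigma\in W^v_V$, the gallery attached to the pair $(\tau,\sigma)$ maps to ${\mathcal T}$.

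The one point requiring care---and the crux of the argument---is that the pairing condition in the definition of Young tableaux corresponds to realizability by some $\sigma\in W^v_V$ in both directions, which rests on the equality of $(W_V^1)^v\cdot\om_i$ with the full set of sign-variants of $\om_i$ (all sign changes for the ${\tt B}_i$-factor, an even number for the ${\tt D}_i$-factor). Since this orbit equality is precisely what the identification with twice the weights of the spin representation of $Spin_{2i+1}$ (resp. $Spin_{2i}$) provides in the preceding paragraph, the bijection then assembles routinely from the two columnwise bijections together with this pairing translation.
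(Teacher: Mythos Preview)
Your proposal is correct and follows precisely the approach of the paper: the lemma there is stated with ``It follows:'' immediately after the construction of the correspondence, and your argument simply makes explicit the well-definedness, injectivity, and surjectivity that the paper leaves implicit in that construction. In particular, your key observation---that a signed permutation $\tau^{-1}$ carries sign changes among the coordinates of $\mu_1=\tau(\om_i)$ to the same number (hence same parity) of sign changes among the coordinates of $\om_i$---is exactly the ``by linearity'' step the paper invokes in the paragraph preceding the lemma.
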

\par\noindent
{\bf One-skeleton galleries and tableaux:}
\par\noindent
For a dominant coweight $\lambda = \sum a_i\omega_i$ let $\gamma_{\underline{\lambda}}$ 
be the concatenation of the galleries $\gamma_{\omega_j}$ associated to the fundamental 
coweights:
$$
\begin{array}{rcl}
\gamma_{\underline{\lambda}} & = & 
\underbrace{\gamma_{\omega_1} *\cdots *\gamma_{\omega_1}}_{a_1 \hbox{\footnotesize --times }} * 
\cdots *\underbrace{\gamma_{\omega_n} *\cdots *\gamma_{\omega_n}}_{a_n \hbox{\footnotesize --times }}\\
\end{array}
$$ 
and let $\gamma=\gamma_1*\cdots*\gamma_r$, $r=\sum a_i$, be a gallery of the same type as $\gamma_{\underline{\lambda}}$.
One can associate to $\gamma$ in a natural way a tableau of shape $p_{\underline{\lam}}$: fix $j$ minimal such that $a_j\not=0$
and let ${\mathcal T}_1$ be the one- respectively two column tableau of shape $p_{\underline{\omega_j}}$ associated to $\gamma_1$.
Suppose we have already defined ${\mathcal T}_k$, $1\le k\le r$. If $r=k$, then set ${\mathcal T}_\gamma={\mathcal T}_k$. 
If $k<r$, let $\ell$ be such that $\gamma_{k+1}$ is of shape
$\omega_\ell$ and let ${\mathcal T}_{k+1}$ be the tableau obtained from ${\mathcal T}_k$ by adding to the left the one column 
(if $\omega_\ell$ is minuscule) respectively the two column tableau (if $\omega_\ell$ is not minuscule) corresponding to 
$\gamma_{k+1}$. The construction above implies:
\begin{proposition}\label{gallerytableau}
The correspondence $\gamma\leftrightarrow {\mathcal T}_\gamma$ describes a bijection between the set of galleries of the same type as 
$\gamma_{\underline{\lambda}}$ and the set of
Young tableaux of shape $p_{\underline{\lambda}}$ and type ${\tt A}_n$ respectively ${\tt B}_n$ or ${\tt C}_n$.
\end{proposition}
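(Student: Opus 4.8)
The plan is to realise the correspondence $\gamma\leftrightarrow{\mathcal T}_\gamma$ as the product of the one-factor bijections furnished by the two preceding lemmas (the minuscule and non-minuscule cases for a single $\gamma_{\omega_i}$), glued together along the concatenation $\gamma_{\underline{\lambda}}=\gamma_{\omega_1}*\cdots*\gamma_{\omega_n}$ with multiplicities. First I would establish that an arbitrary gallery $\gamma$ of the same type as $\gamma_{\underline{\lambda}}$ admits a \emph{canonical} factorisation $\gamma=\gamma_1*\cdots*\gamma_r$ in which $\gamma_k$ has the same type as the $k$-th fundamental factor. The key observation is that the junction vertices of $\gamma_{\underline{\lambda}}$ --- the targets of the successive fundamental factors --- are coweights, hence special vertices, whereas the interior vertex $\tfrac12\omega_i$ occurring inside a non-minuscule factor is not special (its spherical group $W^v_V$ is of type ${\tt B}_i\times{\tt B}_{n-i}$ or ${\tt D}_i\times{\tt C}_{n-i}$, strictly smaller than $W$). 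Since being special is a property of the \emph{type} of a vertex, every gallery of type $t_{\gamma_{\underline{\lambda}}}$ has its special vertices in exactly these positions, so the factorisation is forced by the type alone.

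Next I would argue that the factors $\gamma_1,\dots,\gamma_r$ may be chosen freely and independently, so that $\gamma\mapsto(\gamma_1,\dots,\gamma_r)$ is a bijection onto the product of the one-factor gallery sets. Concretely, once the source of $\gamma_k$ (the target of $\gamma_{k-1}$) is fixed, $\gamma_k$ can be any gallery of type $\gamma_{\omega_{j(k)}}$ issuing from that special vertex, and by translation these are the same as the galleries of that type issuing from $\Lo$; no compatibility between consecutive factors is imposed beyond the matching of source and target, which the concatenation convention arranges automatically since these are special vertices. This is consistent with Lemma~\ref{weylgroupclasssequence}: the parametrising product $\prod_i W_{V_i}/W_{E_i}$ splits as a product over the $r$ factors, and passing from the intra-factor cosets to the absolute directions $\tau_k(\omega_{j(k)})$ --- and, in the non-minuscule case, the pairs $\tau_k(\omega_{j(k)}),\,\tau_k\sigma_k(\omega_{j(k)})$ --- recorded by the columns only amounts to composition with the invertible cumulative prefactor, hence preserves freeness and independence.

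Now, invoking the two lemmas above, each minuscule factor corresponds bijectively to one column and each non-minuscule factor to a pair of columns of shape $p_{\underline{\omega_{j(k)}}}$ satisfying the column conditions --- and, for types ${\tt B}_n$ and ${\tt C}_n$, the two-column conditions --- of the relevant tableau type. Since the construction of ${\mathcal T}_\gamma$ reads off precisely these columns from right to left in the order of the factors, $\gamma\mapsto{\mathcal T}_\gamma$ is the composite of the factorisation bijection with the product of the one-factor bijections, hence a bijection onto its image. Finally I would match the image with the set of all Young tableaux of shape $p_{\underline{\lambda}}$: a minuscule $\omega_\ell$ contributes one column of height $\ell$ and a non-minuscule $\omega_\ell$ two columns of height $\ell$, and summing over the $a_\ell$ copies of each shows that the assembled columns fill exactly the Young diagram of shape $p_{\underline{\lambda}}$ (the number of columns of height $\ge i$ equals $p_i$ in each of the three types), with the non-minuscule factors producing exactly the adjacent pairs singled out in the definitions for types ${\tt B}_n$ and ${\tt C}_n$. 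As the statement concerns all tableaux, and no semistandardness or row condition is imposed, no further constraint on the assembled columns appears. I expect the main obstacle to be the first two steps --- the canonicity of the factorisation and the genuine independence of the factor directions --- since the remaining verifications follow directly from the two lemmas and the shape count.
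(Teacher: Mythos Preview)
Your proposal is correct and follows essentially the same approach as the paper, which treats the proposition as an immediate consequence of the construction and the two preceding lemmas (the paper's entire proof is the sentence ``The construction above implies:'' preceding the statement). Your elaboration on the canonical factorisation via specialness of the junction vertices is sound but slightly more than is needed: since the type $t_{\gamma_{\underline{\lambda}}}$ is a fixed sequence of vertex and edge types, the positions at which the fundamental pieces begin and end are already recorded in the type, so the factorisation of any gallery of the same type is forced directly, and Lemma~\ref{weylgroupclasssequence} gives the product parametrisation $\prod_i W_{V_i}/W_{E_i}$ exhibiting the independence of the factors without further argument.
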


\subsection{Varieties of galleries and Bott-Samelson varieties}

Fix a combinatorial one-skeleton gallery 
$\gamma=(V_0 = \Lo\subset E_0\supset V_1 \subset  \cdots \supset V_r\subset E_r\supset V_{r+1})$, we can associate
to the gallery a sequence of parahoric subgroups:
$$
G(\co)\supset P_{E^f_0}  \subset P_{V^f_1} \supset\ldots \subset P_{V^f_r}\supset P_{E^f_r} \subset P_{V^f_{r+1}}.
$$
We use now this correspondence to identify one-skeleton galleries with points in (generalized) 
Bott-Samelson varieties.
\begin{dfn}\rm
The {\it variety $\Sigma(t_\gamma)$ of galleries of type $t_\gamma$ starting in $V_0 = \Lo$} is the closed subvariety of
$$
G(\ck)/G(\co)\times G(\ck)/P_{E^f_0}\times \ldots \times G(\ck)/P_{E^f_r}\times G(\ck)/P_{V^f_{r+1}}
$$ 
given by all sequences of parahoric subgroups of shape 
$$
G(\co)\supset Q_0\subset R_1\supset Q_1\subset\cdots\supset  Q_r\subset R_{r+1},
$$
where $R_i$ is conjugate to $P_{V^f_i}$ for $i=1,\ldots,r+1$ and $Q_i$ is conjugate to $P_{E^f_i}$ for $i=0,\ldots,r$. 
\end{dfn} 
The action of the group $G(\ck)$ on $\cj^{\mathfrak a}$ naturally extends to an action of $G(\ck)$ on the set of galleries. 
The action of $G(\ck)$ is type preserving, the variety of galleries of fixed type $\Sigma(t_\gamma)$ starting in $V_0$
is stable under the action of $G(\co)$.
Because of the bijection of parahoric subgroups with faces of $\cj^{\mathfrak a}$, the set of all points of the 
variety $\Sigma(t_\gamma)$ is in bijection with the one-skeleton galleries in $\cj^{\mathfrak a}$
$$
g = (V_0 = \Lo\subset E'_0\supset V'_1\subset E'_1\supset \cdots\subset E'_r\supset
V'_{r+1})
$$ 
having type $t_\gamma$. The combinatorial galleries correspond to sequences of subgroups 
conjugated to the $P_{E^f_i}$'s and $P_{V^f_i}$'s by elements in $\Waff$, 
these are precisely the $T$-fixed points in $\Sigma(t_\gamma)$.
Given a sequence of parahoric subgroups
$$
G(\co)\supset P_{E^f_0}\subset P_{V^f_1}\supset P_{E^f_0}\subset\cdots\supset  P_{E^f_r}\subset P_{V^f_{r+1}},
$$ one defines the fibred product 
$$
G(\co)\times_{P_{E^f_0}} P_{V^f_1} \times_{P_{E^f_1}} \ldots  
\times_{P_{E^f_{r-1}}}  P_{V^f_r}/{P_{E^f_r}}
$$ as the quotient of $P_{V^f_0}\times P_{V^f_1}\times\cdots 
\times P_{V^f_r}$ by $P_{E^f_0}\times P_{E^f_1}\times\cdots \times P_{E^f_r}$ given by the action : 
$$
(p_0,p_1,..., p_r)\cdot (q_0,q_1,...,q_r)  = (q_0p_0,p_0^{-1}q_1p_1,..., p_{r-1}^{-1}q_rp_r).
$$ This fibred product is a smooth projective complex variety. Its points are denoted by $[g_0,\ldots,g_r]$. The following proposition is proved in \cite{CC} in the case of varieties of galleries in the spherical building associated to a semi-simple group. The proof extends naturally to our setting.

\begin{proposition}
As a variety, $\Sigma(t_\gamma)$ is isomorphic to the fibred product via the map 										
$$
\begin{array}{rcl}
[g_0,\ldots,g_r]\mapsto & 
(P_{V_0}\supset g_0{P_{E_0}}g_0^{-1}\subset g_0P_{V_1}g_0^{-1}\supset g_0g_1P_{E_1}g_1^{-1}g_0^{-1}
\subset\cdots\hskip 30pt \\
&\hskip 50pt\hfill\cdots \subset g_0\cdots g_rP_{V_{r+1}}g_r^{-1}\cdots g_0^{-1}).
\end{array}
$$
\end{proposition}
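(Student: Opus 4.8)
The plan is to exhibit the stated map as an isomorphism by building an explicit inverse out of the local structure of the building and then running an induction on the length $r$ of the gallery; the engine of the induction is the identification $P_V/P_F\cong\ch_V/Q_F$ recorded at the end of Section~\ref{AppartChamberBuilding}. Throughout I write $\phi$ for the map $[g_0,\ldots,g_r]\mapsto(P_{V_0}\supset g_0P_{E_0}g_0^{-1}\subset\cdots\subset g_0\cdots g_rP_{V_{r+1}}g_r^{-1}\cdots g_0^{-1})$, with $g_i$ a representative in $P_{V_i^f}$ and $P_{V_0}=G(\co)$.

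First I would check that $\phi$ is well defined. The inclusions $P_{E_i}\subset P_{V_i}$ and $P_{E_i}\subset P_{V_{i+1}}$, which come from $V_i,V_{i+1}\subset E_i$, guarantee that each conjugated subgroup in the image is contained in, respectively contains, its neighbour, so that the image is a genuine one-skeleton gallery; since conjugation preserves types, it has type $t_\gamma$ and lies in $\Sigma(t_\gamma)$. Independence of the representative amounts to seeing that replacing $(g_0,\ldots,g_r)$ by $(g_0q_0,q_0^{-1}g_1q_1,\ldots,q_{r-1}^{-1}g_rq_r)$ with $q_i\in P_{E_i}$ leaves every term of the image unchanged. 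Indeed the partial products telescope, $g_0\cdots g_i\mapsto g_0\cdots g_iq_i$, and the extra factor $q_i$ normalises $P_{E_i}$ (trivially) and $P_{V_{i+1}}$ (because $q_i\in P_{E_i}\subset P_{V_{i+1}}$), so each edge and each vertex of $\phi([g])$ is unaltered; hence $\phi$ descends to the fibred product.

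For the isomorphism I would argue by induction on $r$. Let $\gamma'$ be the gallery obtained by deleting the last edge and vertex. There is a forgetful morphism $\Sigma(t_\gamma)\to\Sigma(t_{\gamma'})$ remembering the initial segment $(V_0\subset\cdots\supset V_r')$, and, on the Bott--Samelson side, the projection forgetting the factor $g_r$; by construction $\phi$ intertwines these with the analogous map $\phi'$ for $\gamma'$, which is an isomorphism by the inductive hypothesis. It then suffices to identify the fibres and to check that $\phi$ restricts to an isomorphism between them. Over a point of $\Sigma(t_{\gamma'})$ a lift is exactly a choice of edge $E_r'$ of the type of $E_r$ with $V_r'\subset E_r'$ (the final vertex is then the endpoint of $E_r'$ of prescribed type, hence determined); by the residue building at $V_r'$ (Theorem~4.6.33 of \cite{BT2}, quoted above) such edges form the Grassmannian $\ch_{V_r'}/Q_{E_r}\cong P_{V_r'}/P_{E_r}$. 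On the Bott--Samelson side the fibre is $P_{V_r^f}/P_{E_r^f}$, and $\phi$ sends $g_rP_{E_r}$ to the edge $g_0\cdots g_rE_r$, i.e. the translate by $g_0\cdots g_{r-1}$ of the local identification $P_{V_r}/P_{E_r}\xrightarrow{\sim}\{\text{edges of type }E_r\text{ at }V_r\}$. Thus $\phi$ is an isomorphism on fibres; being an isomorphism on base and fibres of the tower it is an isomorphism, the base case $r=0$ being precisely $G(\co)/P_{E_0}\cong\{\text{edges of type }E_0\text{ at }\Lo\}$.

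The step I expect to require the most care is promoting this fibrewise bijection to an isomorphism of \emph{varieties} rather than of point sets. This has two ingredients: that the tower $\Sigma(t_\gamma)\to\Sigma(t_{\gamma'})$ is Zariski-locally trivial with fibre $\ch_{V_r}/Q_{E_r}$, which follows from the local triviality of the quotient maps $P_V\to P_V/P_E$ for these Grassmannians and furnishes local sections lifting a gallery to a representative $(g_0,\ldots,g_r)$; and that all identifications are algebraic inside the ind-scheme $G(\ck)$, handled by restricting to a finite-dimensional $G(\co)$-stable approximation of $\cg$ containing the relevant Schubert data, exactly as in \cite{CC} for the spherical building. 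Granting these, the inverse assembled from the local sections is a morphism, and the induction closes.
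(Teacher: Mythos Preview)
Your argument is correct and is the standard inductive proof of this kind of statement: well-definedness via the telescoping of partial products, then identification of the fibre of the forgetful map with the Grassmannian $P_{V_r}/P_{E_r}$ at each stage, plus the observation that the local sections of $P_V\to P_V/P_E$ assemble into an algebraic inverse. The paper itself does not supply a proof; it simply says that the result is proved in \cite{CC} for galleries in the spherical building of a semisimple group and that the argument carries over to the affine setting. Your induction on $r$ using the residue identification $P_V/P_E\cong \ch_V/Q_E$ is exactly the mechanism one expects in Contou-Carr\`ere's argument, so there is nothing to compare: you have written out what the paper leaves implicit.

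One cosmetic point: when you write ``$\phi$ sends $g_rP_{E_r}$ to the edge $g_0\cdots g_rE_r$'', keep in mind that the Bott--Samelson factor is $P_{V_r^f}/P_{E_r^f}$ while the edge sits at the vertex $V_r'=g_0\cdots g_{r-1}V_r^f$; the conjugation by $g_0\cdots g_{r-1}$ that you invoke is precisely the isomorphism $P_{V_r^f}/P_{E_r^f}\to P_{V_r'}/P_{E_r'}$ induced by the $G(\ck)$-action, and it is worth saying this once explicitly so that the ``translate'' is visibly algebraic.
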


Given a dominant coweight $\lam$, let $\gamma_\lam=(\Lo\subset E_0\supset\ldots\subset E_r\supset \lam)$ 
be a corresponding one-skeleton gallery as in Example \ref{exGalCoweightarbicompletelyfree}. In this case, the variety of galleries of type $t_{\gamma_\lam}$ starting in $\Lo$ is called the Bott-Samelson variety associated to the gallery $\gamma_\lam$ and is denoted by:
$$
\Sigma(\gamma_\lam):=G(\co)\times_{P_{E^f_0}} P_{V^f_1} \times_{P_{E^f_1}} \ldots 
P_{V^f_{r-1}}\times_{P_{E^f_{r-1}}} P_{V^f_r}/{P_{E^f_r}}.
$$ The set of all combinatorial galleries in the Bott-Samelson variety is denoted by $\Gamma(\gamma_\lam)$. For instance, the gallery $\gamma_\lam$ corresponds to $[1,w_1,...,w_r]$, where $w_j$ is the minimal representative of the longest element in $W_{V^f_j}/W_{E^f_j}$.

\begin{exam}\rm
If $\omega$ is a minuscule fundamental coweight, then $\gamma_\omega=(\Lo\subset E_0\supset \omega)$ and 
$\Sigma(\gamma_\om)=G(\co)/P_{E_0}$ is just a homogeneous space, isomorphic 
to the orbit $G(\co).\om=\overline{G(\co).\om}=X_\om$.
\end{exam}
In general, the connection between $X_\lam$ and $\Sigma (\gamma_\lambda)$ is given by the following proposition: 

\begin{proposition}
\label{pr:BS}
As in Definition \ref{defType}, denote by $\lam^f$ also the point in $\cg=G(\ck)/G(\co)$ corresponding to the 
vertex of the fundamental alcove of the same type as $\lambda$.
The canonical product map
$$
\begin{array}{rcl}
\pi:\ \Sigma(\gamma_\lam):=G(\co)\times_{P_{E_0}} P_{V_1} \times_{P_{E^f_1}} \ldots P_{V^f_r}/{P_{E^f_r}}&\rightarrow& \cg \\
\left [g_0,g_1,\ldots,g_r\right ] &\mapsto & g_0g_1\cdots g_r {\lambda^f}
\end{array}
$$
has as image the Schubert variety $X_\lam$. The induced map $\pi: \Sigma(\gamma_\lam)\rightarrow X_\lam$
defines a desingularization of the variety $X_\lam$.
\end{proposition}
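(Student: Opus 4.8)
The plan is to verify two things: first, that the image of $\pi$ is exactly $X_\lam$, and second, that $\pi$ is birational onto its image, so that together with the smoothness and projectivity of $\Sigma(\gamma_\lam)$ (already established for the fibred product) this yields a desingularization. The essential geometric input is the identification, via the correspondence between parahoric subgroups and faces, of a point $[g_0,\ldots,g_r]$ with a one-skeleton gallery $(V_0=\Lo\subset g_0E_0\supset\cdots\supset g_0\cdots g_rV_{r+1})$ in $\cj^{\mathfrak a}$ of type $t_{\gamma_\lam}$; under this dictionary $\pi$ sends the gallery to its target $g_0\cdots g_r\lam^f\in\cg$.

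First I would compute the image. Since $\Sigma(\gamma_\lam)$ is irreducible (it is an iterated fibre bundle with connected fibres $P_{V^f_i}/P_{E^f_i}$ over the connected base $G(\co)/P_{E^f_0}$) and $\pi$ is $G(\co)$-equivariant and proper, the image is a closed, irreducible, $G(\co)$-stable subvariety of $\cg$. The distinguished point $[1,w_1,\ldots,w_r]$ corresponding to $\gamma_\lam$ itself maps to $\lam$, so the image contains $G(\co).\lam$; its closure is $X_\lam=X(w_0(\lam))$ by the discussion in Section~\ref{SchuAndGrassmann}. To see the image is contained in $X_\lam$, I would check that the targets of all galleries of type $t_{\gamma_\lam}$ lie in $\overline{G(\co).\lam}$: by $G(\co)$-equivariance it suffices to bound the $T$-fixed points, i.e. the targets $w_0\cdots w_r(\lam^f)$ of combinatorial galleries, and these are coweights $\nu$ obtained from $\lam$ by the fundamental-gallery combinatorics, all satisfying $\nu\preceq\lam$ in the dominance order, hence $\nu\in X_\lam$. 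Thus $\Ima\pi=X_\lam$.

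Second, I would establish birationality. The key observation is that the open orbit $G(\co).\lam\subset X_\lam$ has a unique preimage structure: a point of $\cg$ lying in the open orbit determines its defining one-skeleton gallery uniquely, because the sequence of parahorics $Q_0\subset R_1\supset\cdots$ is forced by the requirement that the gallery be \emph{minimal} (stretched, unfolded) with target in the open orbit. Concretely, over the open cell the fibre of $\pi$ is a single point, since reconstructing $[g_0,\ldots,g_r]$ from $g_0\cdots g_r\lam^f$ amounts to reading off the unique minimal gallery joining $\Lo$ to the given special vertex, and $\dim\Sigma(\gamma_\lam)=\langle 2\lam,\rho\rangle=\dim X_\lam$ matches by the dimension formula for $X_\lam$ in Section~\ref{SchuAndGrassmann}. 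Therefore $\pi$ is a proper birational morphism from a smooth projective variety onto $X_\lam$, i.e. a desingularization.

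I expect the main obstacle to be the birationality step, specifically the proof that over the open orbit $G(\co).\lam$ the fibre of $\pi$ is reduced to one point. This requires showing that a minimal one-skeleton gallery is uniquely determined by its endpoints together with its type — equivalently, that no nontrivial folding or rearrangement of the intermediate faces can produce another preimage with the same target in the open orbit. This is precisely the content that Section~\ref{seMinimal} develops (minimal galleries always lie in a single apartment, and the correspondence with points in the open orbit is a bijection), so I would either appeal to those later results or, to keep the argument self-contained here, verify directly the matching of dimensions and the generic injectivity using the Bia\l ynicki-Birula decomposition indicated in the introduction.
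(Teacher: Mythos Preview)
Your plan is correct and matches the paper's own approach: the paper gives no detailed argument, stating only that the proof is ``similar to the proof in the classical case and is based on the fact that the gallery $\gamma_\lam$ is minimal,'' with minimality developed in Section~\ref{seMinimal}. Your outline --- image via properness, irreducibility and $G(\co)$-equivariance, birationality via uniqueness of minimal galleries with given type and target --- is exactly the classical Bott--Samelson argument transported to this setting, and is more explicit than what the paper records.

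One caution on the birationality step: when you write that you would ``appeal to those later results'' in Section~\ref{seMinimal}, be careful not to invoke Proposition~\ref{orbit}, since its proof \emph{uses} Proposition~\ref{pr:BS} (it deduces the bijection $G(\co).\gamma_\lam\simeq G(\co).\lam$ from $\pi$ being a desingularization). What you may safely use is Proposition~\ref{minsklel1}~(c), whose proof is independent of Proposition~\ref{pr:BS} and gives exactly the uniqueness statement you need: a minimal one-skeleton gallery in $G(\co).\gamma_\lam$ lying in $C^+$ is unique. Combined with the dimension equality $\dim\Sigma(\gamma_\lam)=\langle 2\lam,\rho\rangle=\dim X_\lam$ (which you already note), this yields birationality without circularity.
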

\noindent 
The proof of this proposition is similar to the proof in the classical case and is based on the fact that the gallery $\gamma_\lam$ is minimal. The notion of minimality in our context is defined and discussed in Section \ref{seMinimal}.
\subsection{Cells}
\label{suseCells}
Let $\eta:\bc^*\rightarrow T$ be a generic anti-dominant coweight. Then the set of $\eta$-fixed-points 
in $\Sigma(\gamma_\lam)$
is finite and is in bijection with the set of all combinatorial galleries of the same type as $\gamma_\lambda$. For such a
fixed $\gamma$ denote by $C_{\gamma}$ the corresponding Bia\l ynicki-Birula cell, i.e. the set of points such 
that $\lim_{t\rightarrow 0}\eta(t).x=\gamma$. 

For a face $F$ in $\cj^{\mathfrak a}$, $\lim_{t\rightarrow 0}\eta(t).F = r_{-\infty}(F)$, and for a face $F$ in $\A$, 
$r_{-\infty}^{-1}(F) = U^-(\ck).F$. Therefore, we want to determine as precisely as possible the group 
$\Stab_-(F) = \Stab_{U^-(\ck)}(F)$ and the set $\Stab_-(V)/\Stab_-(F)$ when $F$ and $V$ are faces of the Coxeter
complex such that $V\subset F$. 

Bruhat and Tits (see (7.1.1) in \cite{BT}) associate to a face $F$ of the Coxeter complex 
the function $f_F:\alpha\mapsto\inf_{k\in\mathbb Z}\{\alpha(F) + k\geq 0\}$. 
If $\alpha\in\Phi$, then $f_F(\alpha)$ is the smallest integer $n$ such that $F$ 
lies in the closed half-space $H^+_{\alpha,n}$. The function $f_F$ is convex
and positively homogeneous of degree $1$; in particular,
$f_F(i\alpha+j\beta)\leqslant if_F(\alpha)+jf_F(\beta)$ for all roots
$\alpha,\beta\in\Phi$ and all positive integers $i,j$. 

When $F$ and
$V$ are two faces of $\A$ such that
$V\subset F$, then we denote by $\Phi_-^{\mathfrak a}(V,F)$ the set of
all affine roots $\beta\in\Phi_-\times\mathbb Z$ such that
$V\subset \ch_\beta$ and $F\not\subset H^+_\beta$; in other words,
$(\alpha,n)\in\Phi_-^{\mathfrak a}(V,F)$ if and only if $\alpha\in\Phi_-$,
$n=f_{V}(\alpha)$ and $n+1=f_F(\alpha)$. We denote by
$\Stab_-(V,F)$ the subgroup of $U^-(\ck)$ generated by the
elements of the form $x_\beta(a)$ with $\beta\in\Phi_-^{\mathfrak a}(V,F)$
and $a\in\mathbb C$. We plot an example to help the understanding 
of all these definitions. In the following picture, $\alpha$ is a positive root.

\begin{center}
 \setlength{\unitlength}{1cm}
\vskip-60pt
\begin{picture}(6,8)
\put(0.4,0.5){$\ch_{-\alpha,-2}$}
\put(1.6,0.5){$\ch_{-\alpha,-1}$}
\put(2.8,0.5){$\ch_{-\alpha,0}$}
\put(3.8,0.5){$\ch_{-\alpha,1}$}
\put(4.9,0.5){$\ch_{-\alpha,2}$}
\put(3,5.5){$\ch^+_{-\alpha,1}$}

\put(6.5,2.5){$f_V(-\alpha) = 1$}
\put(6.5,3.5){$f_F(-\alpha) = 2$}

\put(3,2){\circle*{0.15}}
\put(2.7,1.8){$0$}
\put(3,2){\vector(1,0){2}}
\put(5.1,2){$\alpha$}

\put(4,3){\circle*{0.15}}
\put(3.5,3){$V$}
\put(4,3){\line(1,1){1}}
\put(4.4,3.7){$F$}

\thicklines
\put(0,1){\line(0,1){4}}
\put(1,1){\line(0,1){4}}
\put(2,1){\line(0,1){4}}
\put(3,1){\line(0,1){4}}
\put(4,1){\line(0,1){5}}
\put(5,1){\line(0,1){4}}
\put(6,1){\line(0,1){4}}
\end{picture}
\end{center}
\vskip-10pt
The following proposition is proved in \cite{BaGa}, Proposition 19.
\begin{proposition}
\label{pr:DescStab-}
\begin{enumerate}
\item\label{it:PrDS-a}
The stabilizer $\Stab_-(F)$ of a face $F$ of the Coxeter
complex is generated by the elements $x_\alpha(p)$, where
$\alpha\in\Phi_-$ and $p\in\ck$ satisfy $\val(p)\geqslant
f_F(\alpha)$.
\item\label{it:PrDS-b}
Let $F$ and $V$ be two faces of the Coxeter complex such that
$V\subset F$. Then $\Stab_-(V,F)$ is a set of
representatives for the right cosets of $\Stab_-(F)$ in $\Stab_-(V)$.
For any total order on the set $\Phi_-^{\mathfrak a}(V,F)$, the map
$$
(a_\beta)_{\beta\in\Phi_-^{\mathfrak a}(V,F)}\mapsto\prod_{\beta\in\Phi_-^{\mathfrak a}(V,F)}x_\beta(a_\beta)
$$ is a bijection from $\mathbb C^{\Phi_-^{\mathfrak a}(V,F)}$ onto $\Stab_-(V,F)$, where $\mathbb C^{\Phi_-^{\mathfrak a}(V,F)}$ is the set of all mappings from $\Phi_-^{\mathfrak a}(V,F)$ to $\mathbb C$. 
\end{enumerate}
\end{proposition}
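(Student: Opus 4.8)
The plan is to treat both assertions as consequences of the Bruhat--Tits theory of the valued root datum attached to $U^-(\ck)$, the only inputs being the Chevalley commutator formula and the concavity of the function $f_F$ recorded above. Throughout I write $U_{\alpha,r}=\{1\}\cup\{x_\alpha(p)\mid p\in\ck,\ \val(p)\ge r\}$, and I use repeatedly the defining relation of the building: $x_\alpha(p)$ fixes a point $x\in\A$ if and only if $x_\alpha(p)\in U_x$, i.e. $\val(p)\ge -\langle\alpha,x\rangle$; hence $x_\alpha(p)$ fixes all of $F$ exactly when $\val(p)\ge\ell_\alpha(F)$, equivalently (as $\val(p)\in\bz$) when $\val(p)\ge f_F(\alpha)$.

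For part~\eqref{it:PrDS-a}, the inclusion $\supseteq$ is immediate: each proposed generator fixes $F$ by the relation above and lies in $U^-(\ck)$ since $\alpha\in\Phi_-$, so the subgroup $N_F$ they generate is contained in $\Stab_-(F)$. To see that $N_F$ consists exactly of these products and to obtain the reverse inclusion, I would first fix a total order on $\Phi_-$ and show that the product map $\prod_{\alpha\in\Phi_-}U_{\alpha,f_F(\alpha)}\to N_F$ is a bijection independent of the order: the commutator formula writes $[x_\alpha(p),x_\beta(q)]$ as a product of terms $x_{i\alpha+j\beta}(c\,p^iq^j)$, and each coefficient has valuation at least $i\,\val(p)+j\,\val(q)\ge i f_F(\alpha)+j f_F(\beta)\ge f_F(i\alpha+j\beta)$ by concavity, so commutators stay inside $N_F$ and the usual unipotent-group argument gives unique factorization. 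The reverse inclusion $\Stab_-(F)\subseteq N_F$ is the delicate point; I would prove it by descending induction on the height filtration of $\Phi_-$. Writing $u\in\Stab_-(F)$ in the order-fixed factorization $u=\prod_\alpha x_\alpha(p_\alpha)$ of $U^-(\ck)$ and reducing modulo the (normal) subgroup generated by the non-simple negative root groups, $u$ becomes a commuting product over the simple negative roots; if some $\val(p_{-\alpha_i})<f_F(-\alpha_i)$ then $u$ would push $F$ strictly across the wall $\ch_{-\alpha_i,\val(p_{-\alpha_i})}$ in a way the deeper and independent factors cannot compensate, which one detects via the retraction $r_{-\infty}$ or the $U^-(\ck)$-orbit description of its fibres. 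This forces $\val(p_{-\alpha_i})\ge f_F(-\alpha_i)$; peeling off these factors, which now lie in $N_F$, and passing to the quotient closes the induction.

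For part~\eqref{it:PrDS-b} I would run everything through the two concave functions $f_V\le f_F$. Since $V\subseteq F$ one has $f_V\le f_F$ everywhere, and for the faces in question (a vertex inside an edge, or two consecutive faces of a gallery) the difference lies in $\{0,1\}$; by definition $\Psi:=\Phi_-^{\mathfrak a}(V,F)$ collects exactly the negative roots $\alpha$ where it equals $1$, i.e. $n=f_V(\alpha)$, $n+1=f_F(\alpha)$. The first key step is that $\Psi$ is closed under addition of affine roots: if $(\alpha,m),(\beta,n)\in\Psi$ and $\alpha+\beta\in\Phi$, then $V$ lies on both walls $\ch_{\alpha,m}$, $\ch_{\beta,n}$, hence on $\ch_{\alpha+\beta,m+n}$, giving $f_V(\alpha+\beta)=m+n$; and $F^o$ is strictly on the negative side of each, hence of their sum, giving $f_F(\alpha+\beta)=m+n+1$. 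The same computation handles higher combinations, so by the commutator estimate of part~\eqref{it:PrDS-a} the set $\Stab_-(V,F)$ is a subgroup and the map $\bc^{\Phi_-^{\mathfrak a}(V,F)}\to\Stab_-(V,F)$ is a bijection for every order.

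It then remains to prove that multiplication $\Stab_-(F)\times\Stab_-(V,F)\to\Stab_-(V)$ is a bijection. Using the factorizations of part~\eqref{it:PrDS-a} for $f_V$ and $f_F$ with an order placing the roots of $\Psi$ last, an element $g\in\Stab_-(V)$ factors as $g=g'b$ with $g'\in\Stab_-(F)$ (its non-$\Psi$ part, where $f_V=f_F$) and $b=\prod_{\alpha\in\Psi}x_\alpha(p_\alpha)$; splitting $p_\alpha=a_\alpha t^{f_V(\alpha)}+p_\alpha'$ with $\val(p_\alpha')\ge f_F(\alpha)$ and moving the $p_\alpha'$-parts to the right, the closure of $\Psi$ guarantees that every commutator produced lands in $\langle U_{\alpha,f_F(\alpha)}\mid\alpha\in\Psi\rangle$. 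Because $\Stab_-(V,F)$ normalizes this last group — the exponent bound $i f_V(\beta)+j f_F(\alpha)\ge f_F(i\beta+j\alpha)=i f_V(\beta)+j f_V(\alpha)+1$ holds since $j\ge 1$ — one collects everything as $g=hs$ with $h\in\Stab_-(F)$ and $s=\prod_{\alpha\in\Psi}x_\alpha(a_\alpha t^{f_V(\alpha)})\in\Stab_-(V,F)$, proving surjectivity onto the coset space. Uniqueness follows from $\Stab_-(F)\cap\Stab_-(V,F)=\{1\}$: in the factorization with $\Psi$ last, an element of the intersection has its $\Psi$-coordinates both equal to $a_\alpha t^{f_V(\alpha)}$ and of valuation $\ge f_F(\alpha)=f_V(\alpha)+1$, forcing each $a_\alpha=0$. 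I expect the genuinely hard part to be the reverse inclusion in~\eqref{it:PrDS-a} — that a fixer element has every coordinate in the prescribed filtration piece — since this is where one must rule out cancellation among the non-commuting factors; once that is in place, part~\eqref{it:PrDS-b} is a bookkeeping of commutator and valuation estimates controlled entirely by the closure of $\Phi_-^{\mathfrak a}(V,F)$.
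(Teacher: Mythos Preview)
The paper does not actually prove this proposition: it simply records that ``the following proposition is proved in \cite{BaGa}, Proposition~19'' and moves on. So there is no in-paper argument to compare against; what you have written is a genuine proof where the paper offers only a citation.

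Your argument is essentially correct. For part~(1) the inclusion $N_F\subseteq\Stab_-(F)$ is clear, and your identification of the reverse inclusion as the delicate step is accurate: your height-filtration sketch is the right idea, though in practice one usually packages it as the Bruhat--Tits statement that for a concave function $f$ the group $U_f^-:=\prod_{\alpha\in\Phi_-}U_{\alpha,f(\alpha)}$ is independent of the order and equals the pointwise stabilizer of the set $\{x:\langle\alpha,x\rangle\ge -f(\alpha)\ \forall\alpha\}$ in $U^-(\ck)$; invoking this once cleans up the ``cannot compensate'' step. For part~(2) your key computation---that $\Phi_-^{\mathfrak a}(V,F)$ is closed under forming $i\alpha+j\beta$ whenever this is a root, because $V$ lies on both walls and $F^o$ lies strictly on the negative side of both---is correct and is exactly what makes $\Stab_-(V,F)$ a group with the asserted product decomposition. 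Your observation that $f_F-f_V\in\{0,1\}$ holds for \emph{any} pair of faces $V\subset F$ (not just a vertex in an edge), since a face never crosses a wall; this slightly strengthens what you wrote. The coset-representative assertion then follows from the two product decompositions $\Stab_-(V)=\prod_{\alpha}U_{\alpha,f_V(\alpha)}$ and $\Stab_-(F)=\prod_{\alpha}U_{\alpha,f_F(\alpha)}$ together with uniqueness of factorization, and your commutator bookkeeping (with $j\ge 1$ forcing the extra~$+1$) is right.
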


\medskip
In $\cj^{\mathfrak a}_V$, $F_V$ corresponds to a spherical face of dimension one
given by an element $\overline {w_F}\in W_V/W_F$ such that $F =
\overline {w_F} \phi^-_F$, where $\phi^-_F$ is the face having the same type as $F_V$
contained in $C_V^-$. Let $D = proj_F(C_V^-)$ be the closest chamber to $C_V^-$
containing $F_V$, then $\overline{w_F} = \overline{w(C_V^-, D)}$.

\begin{proposition}
The walls $\ch_\beta$, $\beta\in\Phi^{\mathfrak a}_-(V,F)$, viewed as walls in
$\A_V$, are the walls crossed by any minimal gallery of chambers between
$C_V^-$ and $D$.
\end{proposition}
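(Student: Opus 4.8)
The plan is to reduce the statement to the standard fact that, in a Coxeter complex, the set of walls crossed by a minimal gallery of chambers between two fixed chambers depends only on that pair and equals the set of walls separating them. First I would fix the book-keeping for walls of the residue apartment $\A_V$: by the description of $\A_V$ as the Coxeter complex of $W^v_V$ (generated by the reflections along $\Ker(\alpha)$, $\alpha\in\Phi_V$), every wall of $\A_V$ is the restriction to $\cj^{\mathfrak a}_V$ of a unique affine wall $\ch_{\alpha,n}$ of $\A$ passing through $V$; among the two roots $\pm\alpha\in\Phi_V$ cutting it out I may always take the negative one, so walls of $\A_V$ are labelled by pairs $\beta=(\alpha,n)$ with $\alpha\in\Phi_-\cap\Phi_V$ and $n=f_V(\alpha)$, equivalently $V\subset\ch_\beta$. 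With this, it suffices to show that such a wall $\ch_\beta$ separates $C_V^-$ and $D$ if and only if $\beta\in\Phi_-^{\mathfrak a}(V,F)$.

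Next I would exploit that $D=\mathrm{proj}_{F_V}(C_V^-)$ is the gate of $F_V$ towards $C_V^-$, and argue by two cases for a wall $m=\ch_\beta$ through $V$. If $m\supset F_V$, then every chamber having $F_V$ as a face, in particular $D$, touches $m$; by the gate property $D$ minimizes the distance to $C_V^-$ among such chambers, so it lies on the same side of $m$ as $C_V^-$, and $m$ does not separate them. If $m\not\supset F_V$, then $F_V$ lies strictly in one open half-space of $m$ (its relative interior cannot meet $m$, else $m$ would contain $F_V$), and every chamber containing $F_V$ — again $D$ in particular — lies on that same side; hence $m$ separates $C_V^-$ and $D$ exactly when $F_V$ sits strictly on the side opposite to $C_V^-$. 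Combining the cases, $\ch_\beta$ separates $C_V^-$ and $D$ if and only if $F_V$, equivalently $F$, lies strictly in the open half-space of $\ch_\beta$ not containing $C_V^-$.

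Finally I would match this with the definition of $\Phi_-^{\mathfrak a}(V,F)$. For $\alpha\in\Phi_-\cap\Phi_V$ and $n=f_V(\alpha)$, writing a point of $C_V^-$ as $V+z$ with $z\in C^-$ gives $\langle\alpha,V+z\rangle+n=\langle\alpha,z\rangle\ge 0$, since $z\in C^-$ forces $\langle\alpha,z\rangle\ge 0$ for $\alpha\in\Phi_-$; thus $C_V^-\subset\ch^+_{\alpha,n}$, i.e. $C_V^-$ always sits on the positive side of $\ch_\beta$. Consequently the condition "$F$ lies strictly on the side opposite to $C_V^-$" reads $F^o\subset\ch^-_{\alpha,n}$ (open), which is precisely $F\not\subset H^+_\beta$; the borderline case $F\subset\ch_\beta$ is automatically excluded, as it would give $F\subset H^+_\beta$. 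Together with $\alpha\in\Phi_-$ and $V\subset\ch_\beta$, this is exactly the defining condition $\beta\in\Phi_-^{\mathfrak a}(V,F)$, so the separating walls are precisely the $\ch_\beta$ with $\beta\in\Phi_-^{\mathfrak a}(V,F)$, as claimed.

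The main obstacle will be the middle step: the clean characterization of the walls separating $C_V^-$ from $D$ through the gate property, in particular verifying that walls containing $F_V$ never separate (so they are not over-counted) and that a face is never straddled by a wall (so the dichotomy "$F$ on the wall" versus "$F$ strictly on one side" is exhaustive). Everything else is a routine translation between the affine half-space conventions in $\A$ and the spherical ones in $\A_V$.
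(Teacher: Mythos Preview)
Your proof is correct and follows the same basic line as the paper's: identify the walls crossed by a minimal gallery with the walls separating $C_V^-$ from $D$, then translate this separation condition into the defining condition for $\Phi_-^{\mathfrak a}(V,F)$.

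The main difference is one of completeness. The paper's proof is two sentences and only argues one inclusion: for $\beta\in\Phi_-^{\mathfrak a}(V,F)$, the condition $F\not\subset H^+_\beta$ forces $\ch_\beta$ to separate $C_V^-$ from $F_V$, hence from $D\supset F_V$, hence $\ch_\beta$ is crossed. The reverse inclusion is left implicit. You supply it explicitly via the gate property of $D=\mathrm{proj}_{F_V}(C_V^-)$: walls containing $F_V$ never separate $C_V^-$ from $D$ (reflecting $D$ would give a closer chamber with face $F_V$), and walls not containing $F_V$ separate $C_V^-$ from $D$ exactly when they separate $C_V^-$ from $F_V$. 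This case analysis is the genuine extra content in your argument, and it is what makes the biconditional airtight. Your verification that $C_V^-\subset H^+_{\alpha,n}$ for $\alpha\in\Phi_-$, $n=f_V(\alpha)$ is also more explicit than the paper's (where it is tacitly used).
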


\noindent{\it Proof. } By definition, $\Phi^{\mathfrak a}_-(V,F) = \{\beta\in\Phi_-\times\mathbb Z \mid V\subset \ch_\beta,\ F\not\subset H^+_\beta\}$. So, for any $\beta$ in this set, the wall $\ch_\beta$ separates $C_V^-$ from $F_V$. Moreover, $F\not\subset H^+_\beta$ implies that it separates also $C_V^-$ from $D$. Hence, $\ch_\beta$ is crossed by any minimal gallery of chambers between $C^-_V$ and $D$.
\qed
\bigskip

Therefore, $\Stab_-(V,F)$ can be identified with $U_V^-(\overline {w_F})$, where
the latter is defined as $B_V^-\overline {w_F} P^-_F/P^-_F = U_V^-(\overline {w_F})\overline {w_F} P^-_F/P^-_F.$
Let 
$$
\delta = [\de_0,\de_1,...,\de_r] = (0 = V_0\subset E_0\supset V_1 \subset 
\cdots \supset V_r\subset E_r\supset V_{r+1}) \in\Gamma(\gamma_\lambda) 
$$ 
and set
$$
\Stab_-(\delta)=\Stab_-(V_0,E_0)\times
\Stab_-(V_1,E_1)\times\cdots\times
\Stab_-(V_r,E_r).
$$
\begin{proposition}\label{pr:Cell}
The map
$$
\begin{array}{l}
f:(v_0,v_1,\ldots,v_r)\quad\mapsto \\
\quad\quad\quad\bigl[v_0\;\overline{\delta_0}\,,\;
\overline{\delta_0}^{-1}\,v_1\;\overline{\delta_0\delta_1}\,,\;
\overline{\delta_0\delta_1}^{-1}\,v_2\;\overline{\delta_0\delta_1
\delta_2}\,,\ldots,\;\overline{\delta_0\cdots\delta_{r-1}}^{-1}\,
v_r\;\overline{\delta_0\cdots\delta_r}\,\bigr]
\end{array}$$
from $\Stab_-(\delta)$ to $\Sigma(\gamma_\lam)$ is injective
and its image is $C_\delta$ (here $\overline{x}$ means that we take a coset representative
of $x$ in $G(\ck)$). Therefore, $C_\delta$ is isomorphic to
$\mathbb C^{\Phi_-^a(V_0,E_0)}\times\cdots\times \mathbb C^{\Phi_-^a(V_r,E_r)}$.
\end{proposition}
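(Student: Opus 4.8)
The plan is to characterise the cell $C_\delta$ through the retraction $\rinf$ and then to reconstruct the coordinates $(v_0,\dots,v_r)$ of any gallery lying in it. Recall that a point $x=[g_0,\dots,g_r]\in\Sigma(\gamma_\lam)$ is the gallery whose faces are $E_i'=g_0\cdots g_i\,E_i^{\gamma_\lam}$ and $V_i'=g_0\cdots g_{i-1}\,V_i^{\gamma_\lam}$, where $E_i^{\gamma_\lam},V_i^{\gamma_\lam}$ are the faces of the reference gallery $\gamma_\lam$, and that $\delta$ itself is the fixed point $[\overline{\delta_0},\dots,\overline{\delta_r}]$, so that its faces are $E_i=\overline{\delta_0\cdots\delta_i}\,E_i^{\gamma_\lam}$ and $V_i=\overline{\delta_0\cdots\delta_{i-1}}\,V_i^{\gamma_\lam}$. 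Since $\lim_{t\to 0}\eta(t).F=\rinf(F)$ for every face $F$, membership $x\in C_\delta$ is equivalent to $\rinf(E_i')=E_i$ and $\rinf(V_i')=V_i$ for all $i$. First I would check that $f$ lands in $\Sigma(\gamma_\lam)$: since $v_i\in\Stab_-(V_i,E_i)\subset P_{V_i}$ and $V_i=\overline{\delta_0\cdots\delta_{i-1}}V_i^{\gamma_\lam}$, conjugation gives $\overline{\delta_0\cdots\delta_{i-1}}^{-1}v_i\overline{\delta_0\cdots\delta_{i-1}}\in P_{V_i^{\gamma_\lam}}$, and as $\delta_i\in W_{V_i^{\gamma_\lam}}/W_{E_i^{\gamma_\lam}}$ one concludes $g_i\in P_{V_i^{\gamma_\lam}}$, so the class $[g_0,\dots,g_r]$ is well defined.

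Next, a telescoping computation gives $g_0\cdots g_i=v_0 v_1\cdots v_i\,\overline{\delta_0\cdots\delta_i}$, whence the faces of $f(v_0,\dots,v_r)$ are exactly $E_i'=v_0\cdots v_i\,E_i$ and $V_i'=v_0\cdots v_{i-1}\,V_i$. As each $v_j\in\Stab_-(V_j,E_j)\subset U^-(\ck)$, the products $v_0\cdots v_i$ lie in $U^-(\ck)$; since the fibres of $\rinf$ are the $U^-(\ck)$-orbits and $E_i,V_i\subset\A$ are fixed by $\rinf$, we get $\rinf(E_i')=\rinf(v_0\cdots v_i E_i)=E_i$ and likewise $\rinf(V_i')=V_i$. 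Hence $f(v_0,\dots,v_r)\in C_\delta$, so $\Ima f\subset C_\delta$.

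For the reverse inclusion and for injectivity I would argue by induction along the gallery, reconstructing the $v_i$ from a given $x=(0\subset E_0'\supset V_1'\subset\cdots)\in C_\delta$. Set $u_{-1}=1$. Assuming $u_{i-1}:=v_0\cdots v_{i-1}\in U^-(\ck)$ has been determined with $V_i'=u_{i-1}V_i$, the edge $u_{i-1}^{-1}E_i'$ contains $V_i$ and, because $u_{i-1}^{-1}\in U^-(\ck)$ preserves $\rinf$, still retracts to $E_i$. The core local statement is that the edges through $V_i$ retracting to $E_i$ are precisely the $v_iE_i$ with $v_i\in\Stab_-(V_i,E_i)$, each occurring once: this is the identification of $\Stab_-(V_i,E_i)\cong U^-_{V_i}(\overline{w_{E_i}})$ with the Schubert cell $B_{V_i}^-\overline{w_{E_i}}P^-_{E_i}/P^-_{E_i}$ in the residue building $\cjaff_{V_i}$, the stabiliser $B_{V_i}^-$ of the negative chamber $C_{V_i}^-$ encoding the local behaviour of $\rinf$ (Proposition~\ref{pr:DescStab-} together with the preceding residue-building discussion). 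This produces a unique $v_i\in\Stab_-(V_i,E_i)$ with $E_i'=u_{i-1}v_iE_i=u_iE_i$; the next vertex $V_{i+1}'$ is then forced to be the remaining endpoint $u_iV_{i+1}$ of $E_i'$, which closes the induction. The resulting tuple satisfies $f(v_0,\dots,v_r)=x$, and uniqueness of $v_i$ at each step yields injectivity.

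The main obstacle is this local step: transferring the global condition $\rinf(E')=E_i$ into the Schubert-cell description inside $\cjaff_{V_i}$, i.e. verifying that $\rinf$ restricted to the star of $V_i$ is governed by the negative Borel $B_{V_i}^-$ so that ``retracts to $E_i$'' matches membership in $B_{V_i}^-\overline{w_{E_i}}P^-_{E_i}/P^-_{E_i}$, and that $\Stab_-(V_i,E_i)$ is a genuine transversal so the representative $v_i$ is unique. Both are supplied by Proposition~\ref{pr:DescStab-} and the identification of $\Phi_-^{\mathfrak a}(V_i,E_i)$ with the walls crossed between $C_{V_i}^-$ and $D=proj_{E_i}(C_{V_i}^-)$. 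Finally, to see that $f$ is an isomorphism of varieties and not merely a bijection, I would precompose with the algebraic parametrisations $\mathbb C^{\Phi_-^a(V_i,E_i)}\xrightarrow{\ \sim\ }\Stab_-(V_i,E_i)$, $(a_\beta)\mapsto\prod_\beta x_\beta(a_\beta)$, of Proposition~\ref{pr:DescStab-}; this exhibits $C_\delta$ as the affine space $\mathbb C^{\Phi_-^a(V_0,E_0)}\times\cdots\times\mathbb C^{\Phi_-^a(V_r,E_r)}$, as claimed.
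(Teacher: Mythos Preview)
Your argument is correct and follows essentially the same route as the paper's proof: both characterise $C_\delta$ via the $U^-(\ck)$-orbits (the paper cites \cite{GL}, Proposition~6, for this), use the telescoping identity $g_0\cdots g_i=v_0\cdots v_i\,\overline{\delta_0\cdots\delta_i}$ to show $\Ima f\subset C_\delta$, and then reconstruct the $v_i$ inductively, invoking Proposition~\ref{pr:DescStab-}~(\ref{it:PrDS-b}) for existence and uniqueness of the representative in $\Stab_-(V_i,E_i)$. The only cosmetic difference is that the paper inserts the intermediate fibred product $\widetilde{\Stab_-(\delta)}$ and splits the bijection in two, whereas you do the reconstruction in one pass; also, what you denote $E_i^{\gamma_\lam},V_i^{\gamma_\lam}$ are the faces $E_i^f,V_i^f$ of the fundamental alcove in the paper's notation.
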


\noindent{\it Proof. } The proof is similar to the one of Proposition 22 in \cite{BaGa}, 
we give it for the comfort of the reader. Set $\widetilde{\Stab_-(\delta)} = $ 
$$ \Stab_-(V_0)\underset{\Stab_-
(E_0)}\times\Stab_-(V_1)\underset{\Stab_-(E_1)}
\times\cdots\underset{\Stab_-(E_{r-1})}\times\Stab_-
(V_r)/\Stab_-(E_r).$$
Using the inclusions
\begin{alignat*}2
\Stab_-(E_j)&\subseteq\overline{\delta_0\cdots\delta_j}\;P_{E_j^f}\;
\overline{\delta_0\cdots\delta_j}^{-1}&&\text{(for
$0\leqslant j\leqslant r$),}\\[4pt]
\Stab_-(V_0)&\subseteq G(\co)\overline{\delta_0}^{-1},&&\\[4pt]
\Stab_-(V_j)&\subseteq\overline{\delta_0\cdots\delta_{j-1}}\;P_{V_j^f}\;\overline{\delta_0\cdots\delta_j}^{-1}&\qquad&\text{(for
$1\leqslant j\leqslant r$),}
\end{alignat*}
standard arguments imply that the map
$$
\begin{array}{l}
f:[v_0,v_1,\ldots,v_r]\quad\mapsto \\
\quad\quad\quad\bigl[v_0\;\overline{\delta_0}\,,\;
\overline{\delta_0}^{-1}\,v_1\;\overline{\delta_0\delta_1}\,,\;
\overline{\delta_0\delta_1}^{-1}\,v_2\;\overline{\delta_0\delta_1
\delta_2}\,,\ldots,\;\overline{\delta_0\cdots\delta_{r-1}}^{-1}\,
v_r\;\overline{\delta_0\cdots\delta_r}\,\bigr]
\end{array}
$$ from $\widetilde{\Stab_-(\delta)}$ to $\hat\Sigma(\gamma_\lambda)$ is
well-defined.

The proof of Proposition~6 in~\cite{GL} says that
an element $d=[g_0,g_1,\ldots,g_r]$ in the Bott-Samelson variety
belongs to the cell $C_\delta$ if and only if there exists
$u_0,u_1,\ldots,u_r\in U^-(\ck)$ such that
$$
g_0g_1\cdots g_j E_j^f\,=u_j E_j\quad\text{and}
\quad u_{j-1} V_j=u_j V_j
$$
for each $j$. Setting $v_0=u_0$ and $v_j=u_{j-1}^{-1}u_j$ for
$1\leqslant j\leqslant r$, the conditions above can be rewritten
$$g_0g_1\cdots g_j P_{E_j^f} = v_0v_1\cdots v_j\;\overline{\delta_0\delta_1
\cdots\delta_j}\;P_{E_j^f} \quad\text{and}\quad v_j\in\Stab_-(V_j),$$
which shows that $f([v_0,v_1,\ldots,v_r])=d$. Therefore the image
of $f$ contains the cell $C_\delta$. The reverse inclusion can be
established similarly.

The map $f$ is injective. Indeed suppose that two elements
$v=[v_0,v_1,\ldots,v_r]$ and $v'=[v'_0,v'_1,\ldots,v'_r]$ in
$\widetilde{\Stab_-(\delta)}$ have the same image. Then
$$v_0v_1\cdots v_j\;\overline{\delta_0\delta_1\cdots\delta_j}\; P_{E_j^f} =
v'_0v'_1\cdots v'_j\;\overline{\delta_0\delta_1\cdots\delta_j}\; P_{E_j^f}$$
for each $j\in\{0,\ldots,r\}$. This means geometrically that
$$v_0v_1\cdots v_j\,\overline{\delta_0\delta_1\cdots\delta_j}\,
E_j^f\,=v'_0v'_1\cdots v'_j\;\overline{\delta_0\delta_1
\cdots\delta_j}\,E_j^f;$$
in other words, $v_0v_1\cdots v_j$ and $v'_0v'_1\cdots v'_j$ are equal
in $U^-(\ck)/\Stab_-(E_j)$. Since this holds for each $j$,
the two elements $v$ and $v'$ are equal in $\widetilde{\Stab_-(\delta)}$.

We conclude that $f$ induces a bijection from $\widetilde{\Stab_-(\delta)}$
onto $C_\delta$. It then remains to observe that the map
$(v_0,v_1,\ldots,v_r)\mapsto[v_0,v_1,\ldots,v_r]$ from
$\Stab_-(\delta)$ to $\widetilde{\Stab_-(\delta)}$ is bijective. This
follows from Proposition~\ref{pr:DescStab-} part \ref{it:PrDS-b}: indeed
for each $[a_0,a_1,\ldots,a_r]\in\widetilde{\Stab_-(\delta)}$, the
element $(v_0,v_1,\ldots,v_r)\in\Stab_-(\delta)$ such that
$[v_0,v_1,\ldots,v_r]=[a_0,a_1,\ldots,a_r]$ is uniquely determined by
the condition that for all $j\in\{0,1,\ldots,r\}$,
$$v_j\in\bigl((v_0\cdots v_{j-1})^{-1}(a_0\cdots a_j)\Stab_-(E_j)
\bigr)\cap\Stab_-(V_j,E_j).$$
\qed
\section{Minimal one-skeleton galleries}
\label{seMinimal}
To study the intersection $Z_{\lam,\mu}:= G(\co).\lam\cap U^-(\ck).\mu$ (see (\ref{intersectionMV}))
using the language of galleries, we need to characterize which galleries in $\Sigma(\gamma_\lam)$
map onto the dense orbit $G(\co).\lam$ in $X_{\lam}$ (see Corollary~\ref{cor:summary}). This will be done by introducing
the notion of minimal galleries. These galleries replace the 
minimal galleries of alcoves used in \cite{GL}. 

\subsection{Minimality relative to an equivalence class of sectors}

A sector $\sfrak$ in the affine building is a sector in some apartment.
Two sectors are called equivalent if the intersection of the two is
again a sector. 
Recall that for two given sectors $\sfrak_1,\sfrak_2$, there exists an apartment $A$ and 
subsectors $\sfrak'_1\subset \sfrak_1$, $\sfrak'_2\subset \sfrak_2$
such that $\sfrak'_1, \sfrak'_2\subset A$. 
The set of equivalence classes of sectors is in bijection with the 
set of Weyl chambers in $\A$. Given a sector $\sfrak$, we denote such an equivalence class by
$\underline\sfrak$.
\begin{dfn}
\label{dfnMini}
A one-skeleton gallery 
$$
\gamma = (V_0\subset E_0\supset V_1 \subset E_1 \supset \cdots \supset
V_r\subset E_r\supset V_{r+1})
$$ 
is called minimal if there exists an equivalence class of sectors $\underline\sfrak_\gamma$
and representatives $\sfrak_0,\ldots,\sfrak_r\in\underline\sfrak_\gamma$ 
such that for all  $i=0,\ldots,r$: $V_i$ is the vertex for the sector $\sfrak_i$ and $V_{i}\subset E_{i}\subset \sfrak_i$. The class $\underline\sfrak_\gamma$ is not necessarily uniquely determined by $\gamma$.
\end{dfn}
The sequence $\underline\sfrak(\gamma)=(\sfrak_0,\ldots,\sfrak_r)$ is called a {\it chain of sectors associated
to $\gamma$.}
\begin{exam}\rm
The galleries described in Examples~\ref{exGalsimpleCoweight},\ref{exGalCoweightarbifreesequence},
\ref{exGalCoweightarbi} and \ref{exGalCoweightarbicompletelyfree} are minimal galleries such that
$\underline\sfrak_\gamma= \underline C^+$.
\end{exam}
\begin{rem}\label{locminbutnotglobmin}\rm

1) With a little extra effort, one can see that this definition is an ``instance'' of Definition 5.24 of \cite{CC}, where Contou-Carr\`ere defines generalized minimal galleries in a Coxeter complex.

2) Thinking in geometric terms one might be inclined to demand that ``minimality"
should be a local property, i.e. to be verified at each vertex of the gallery.
This is not sufficient, see below. Propositions~\ref{minsklel1}
and \ref{orbit} show that the more rigid definition above is the right definition
for our purpose. 
\end{rem}
\begin{exam}\label{locminbutnotglobminexam}\rm
Consider the apartment of type ${\tt A}_2$, we use the notation $\gamma_{\om_1},\gamma_{\om_2}$
as in Example~\ref{exGalCoweightarbifreesequence} for the fundamental weights. For an element $w$
of the Weyl group set $\gamma_{w(\om_i)}:=w(\gamma_{\om_i})$, $i=1,2$.

The galleries $\gamma_1=\gamma_{s_1(\omega_1)}*\gamma_{s_1s_2(\omega_2)}$
and $\gamma_2=\gamma_{s_1s_2(\omega_2)}*\gamma_{s_2s_1(\omega_1)}$
are minimal with $\underline\sfrak_{\gamma_1}= \underline{s_1s_2(C^+)}$
and $\underline\sfrak_{\gamma_2}=\underline{s_1s_2s_1(C^+)}$. 
But the gallery $\gamma:=\gamma_{s_1(\omega_1)}*\gamma_{s_1s_2(\omega_2)}*
\gamma_{s_2s_1(\omega_1)}$ is not minimal in the sense above.
\end{exam}
The natural action of $G(\ck)$ on $\cj^{\mathfrak a}$ induces a natural
action on one-skeleton galleries: Let $\gamma$ be a one-skeleton gallery
and ${\mathbf g}\in G(\ck)$, then we set
$$
{\mathbf g}. \gamma =({\mathbf g}.V_0\subset {\mathbf g}.E_0\supset {\mathbf g}.V_1 \subset 
{\mathbf g}.E_1 \supset \cdots \supset {\mathbf g}.V_r\subset {\mathbf g}.E_r
\supset {\mathbf g}.V_{r+1})
$$
It follows immediately that the property of being minimal is preserved by the action.
Let $\Lo$ be the origin in $\A$.
\begin{proposition}\label{minsklel1}
Let $\gamma$ be a minimal one-skeleton gallery in the building $\cj^{\mathfrak a}$ 
starting in $V_0 = \Lo$ and let $\underline\sfrak(\gamma)=(\sfrak_0,\ldots, \sfrak_r)$ be an associated chain of sectors.
\begin{itemize}
\item[a)] $\gamma$ is contained in $\sfrak_0 $. 
\item[b)] For all $i=0,\ldots,r+1$: $(V_i\subset E_i\supset V_{i+1}\subset\cdots \supset V_{r+1})\subset \sfrak_0(V_i)$. 
In particular, one may choose as associated chain of sectors $\sfrak(\gamma)=(\sfrak_0,\sfrak_0(V_1),
\ldots, \sfrak_0(V_r))$.
\item[c)] There exists a unique gallery $\gamma'$ in the orbit $G(\co). \gamma $ such that $\gamma'$ is contained in  
the dominant Weyl chamber $C^+$ in $\A$ and the chain of sectors associated
to $\gamma$ can be chosen to be all in the class of $C^+$.
\end{itemize}
\end{proposition}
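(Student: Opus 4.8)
The plan is to reduce all three statements to a single geometric input about equivalent sectors, which I would isolate first as a lemma: \emph{if $\sfrak,\sfrak'$ lie in the same equivalence class $\underline\sfrak$, with vertices $\nu,\nu'$, and $\nu'\in\sfrak$, then $\sfrak'=\sfrak(\nu')$ and $\sfrak(\nu')\subset\sfrak$.} The containment $\sfrak(\nu')\subset\sfrak$ is immediate: writing $\sfrak=\nu+C$ and using $\nu'=\nu+z$ with $z\in C$, the cone property $z+C\subset C$ gives $\sfrak(\nu')=\nu'+C\subset\sfrak$. Moreover $\sfrak(\nu')\cap\sfrak=\sfrak(\nu')$ is a sector, so $\sfrak(\nu')\in\underline\sfrak$; since $\sfrak'$ and $\sfrak(\nu')$ are then two sectors with the same vertex $\nu'$ in the same class, the uniqueness of a sector with prescribed vertex and chamber at infinity (a standard fact of building theory, cf.\ \cite{BT}, \cite{R}) forces $\sfrak'=\sfrak(\nu')$.

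Granting this lemma, parts a) and b) follow from one induction. Put $\sfrak_0=\Lo+C$. Assuming $E_{i-1}\subset\sfrak_0$ (the case $i=1$ holds since $V_0\subset E_0\subset\sfrak_0$ by minimality), the endpoint $V_i\in E_{i-1}\subset\sfrak_0$; the lemma applied to $\sfrak_0$ and $\sfrak_i$ gives $\sfrak_i=\sfrak_0(V_i)\subset\sfrak_0$, whence $E_i\subset\sfrak_i\subset\sfrak_0$. This proves $\gamma\subset\sfrak_0$, i.e.\ a), and at the same time identifies $\sfrak_i=\sfrak_0(V_i)$, so the chain may be taken to be $(\sfrak_0,\sfrak_0(V_1),\ldots,\sfrak_0(V_r))$. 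For the first assertion of b) I would rerun the same induction with $\sfrak_0(V_i)$ in place of $\sfrak_0$: from $V_{i+1}\in E_i\subset\sfrak_0(V_i)$ the lemma gives $\sfrak_{i+1}\subset\sfrak_0(V_i)$, and continuing places the whole tail $(V_i\subset E_i\supset\cdots\supset V_{r+1})$ inside $\sfrak_0(V_i)$.

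For existence in c) I would use transitivity. The source $V_0=\Lo$ is a special vertex, so $\sfrak_0$ is based at a special vertex; since $N_\ck$ acts on $\A$ through $\Waff$ (the finite group $W$ together with all translations by $\Xvee$), it is transitive on sectors with special vertex in $\A$, and $G(\ck)$ is transitive on apartments, so there is $g\in G(\ck)$ with $g.\sfrak_0=\Lo+C^+$. As $g$ sends the vertex $\Lo$ of $\sfrak_0$ to the vertex $\Lo$ of $\Lo+C^+$, we have $g\in\Stab_{G(\ck)}(\Lo)=G(\co)$; then by a) the gallery $\gamma':=g.\gamma\subset g.\sfrak_0=C^+$, and by b) its chain of sectors may be chosen in the class $\underline{C^+}$.

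For uniqueness I would show that a minimal gallery starting at $\Lo$, contained in $C^+$ with chain in $\underline{C^+}$, is determined by its type $t_\gamma$; since $G(\co)$ preserves types, any two such galleries in one orbit then coincide. Dominance forces $V_i\subset E_i\subset V_i+C^+$ for all $i$. Inductively, $V_0=\Lo$ is fixed; given $V_i$, the edges of type $\cs^\La(E_i)$ containing $V_i$ form a single orbit under $W_{V_i}$ (Lemma~\ref{weylgroupclasssequence}), and because $\Phi_{V_i}\subset\Phi$ implies $V_i+C^+$ lies inside a fundamental domain for $W_{V_i}$, at most one of them lies in $V_i+C^+$; this pins down $E_i$, hence its far endpoint $V_{i+1}$, and the induction proceeds. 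I expect the main obstacle to be the sector-positioning lemma, and specifically the identification $\sfrak_i=\sfrak_0(V_i)$ across possibly distinct apartments, which is exactly where the uniqueness of a sector with given vertex in a given class is needed; the transitivity input for existence and the fundamental-domain count for uniqueness are routine once that lemma is available.
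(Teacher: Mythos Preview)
Your proposal is correct and follows essentially the same route as the paper. The only cosmetic difference is in how the core geometric fact is packaged: you isolate ``two equivalent sectors with $\nu'\in\sfrak$ satisfy $\sfrak'=\sfrak(\nu')\subset\sfrak$'' as a lemma and prove it via uniqueness of the sector with given vertex in a given class, whereas the paper argues inline using that a sector is the closure of the convex hull of its vertex and any subsector; both reach $\sfrak_{i+1}=\sfrak_i(V_{i+1})\subset\sfrak_i$ and then run the same induction for a), b), and the same transitivity/type-determines-gallery argument for c).
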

\noindent 
{\it Proof.} 
The sectors $\sfrak_{i},\sfrak_{i+1}$, $0\le i\le r-1$, are in the same equivalence class, so there exists a
subsector $\sfrak_{i}'$ contained in both sectors. A sector is the closure of the convex hull
of its vertex and any subsector, and hence $\sfrak_i$ is the closure of the convex hull of
$V_i$ and $\sfrak_{i}'$, and $\sfrak_{i+1}$ is the closure of the convex hull of $V_{i+1}$ and $\sfrak_{i}'$.
Since $\sfrak_i\supset E_i\supset V_{i+1}$ it follows that $\sfrak_{i+1}$ is a subsector of
$\sfrak_{i}$, in fact, $\sfrak_{i+1}=\sfrak_{i}(V_{i+1})$. By induction we conclude:
\begin{equation}
\label{sectorinclusion}
\sfrak_0\subset \sfrak_0(V_1)=\sfrak_1
\subset \sfrak_0(V_2)=\sfrak_1(V_2)=\sfrak_2\subset \ldots
\subset \sfrak_0(V_r)= \ldots= \sfrak_r.
\end{equation}
Now $E_j\subset \sfrak_j$ for all $j=0,\ldots,r$, so
$(V_i\subset E_i\supset V_{i+1}\subset\cdots \supset V_{r+1})\subset \sfrak_0(V_i)$,
which finishes the proof of $a)$ and $b)$. 

Since $G(\co)$ acts transitively on the set of sectors having $\Lo$ as vertex, there exists 
${\mathbf g}\in G(\co)$ such that ${\mathbf g}.\sfrak_0=C^+$. It follows: $\gamma'={\mathbf g}.\gamma$
is completely contained in $C^+$. It remains to prove the uniqueness.

Suppose now ${\mathbf g'}.\gamma =(V'_0\subset E'_0\ldots)$ and 
${\mathbf g''}. \gamma =(V''_0\subset E''_0\ldots)$ are contained in the dominant Weyl chamber
and hence in $\A$. The action of $G(\co)$ preserves types, 
so both galleries have the same gallery of types. Obviously we have $V'_0=V''_0=\Lo$ and 
$E'_0=E''_0$ since both are faces of the same type of the fundamental alcove. It follows: 
$V'_1=V''_1$. Since ${\mathbf g'}.\sfrak_1={\mathbf g'}.(\sfrak_0(V_1))=C^+(V_1')=C^+(V_1'')
={\mathbf g''}.(\sfrak_0(V_1))={\mathbf g''}.\sfrak_1$, $E_1'\supset V_1'$ and $E_1''\supset V_1'$ are faces
of the same type of the same sector $C^+(V_1')$, so necessarily $E_1'=E_1''$. 
Repeating the argument shows $\gamma=\gamma'$.
\qed
\begin{rem}\label{headsector}\rm
In part $b)$ above one can replace $\sfrak_0$ by $-\sfrak_0$ (see \eqref{sfrak2}
for the notation), but one has to replace the ``tail" of the gallery by the ``head":
For all $i=0,\ldots,r+1$: 
$(V_0\subset E_0\supset V_{1}\subset\cdots \supset V_{i})\subset -\sfrak_0(V_i)$.  
\end{rem}
\subsection{Orbits}
The following proposition gives us a precise dictionary between the language
of minimal one-skeleton galleries and orbits of $G(\co)$ in the affine Grassmannian 
$G(\ck)/G(\co)$. 
\begin{proposition}\label{orbit}
Let $\gamma $ be a minimal one-skeleton gallery in $\cj^{\mathfrak a}$ starting in $\Lo$ and ending
in $\lam=V_{r+1}$ in the dominant Weyl chamber $C^+$ in $\A$.
The target $\lam=V_{r+1}$ is a special point and hence is a coweight, by abuse of notation
we also write $\lam$ for the corresponding point in $G(\ck)/G(\co)$. The following natural map
between the $G(\co)$-orbit of the gallery $\gamma$ and the $G(\co)$-orbit of $\lam$ in $\cg$
is bijective:
$$
G(\co). \gamma\longrightarrow G(\co).\lambda\subset G(\ck)/G(\co),\quad
{\mathbf g}. \gamma\mapsto {\mathbf g}.\lam.
$$
\end{proposition}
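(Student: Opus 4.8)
The plan is to recast the claim as an equality of stabilizers and then to prove the one nontrivial inclusion by an induction running along the gallery, using the $\mathrm{CAT}(0)$ geometry of $\cj^{\mathfrak a}$ together with the two sector conditions of Proposition~\ref{minsklel1}.

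First I would observe that the map in question is simply ``take the target'': the target of $\mathbf g.\gamma$ is $\mathbf g.V_{r+1}=\mathbf g.\lam$, so it is well defined on all of $G(\co).\gamma$ and surjects onto $G(\co).\lam$. Hence only injectivity is at stake, and writing $h=\mathbf g'^{-1}\mathbf g$ one sees that injectivity is equivalent to the inclusion $\Stab_{G(\co)}(\lam)\subseteq\Stab_{G(\co)}(\gamma)$, the reverse inclusion being automatic since $\lam=V_{r+1}$ is a face of $\gamma$. Because $\Stab_{G(\co)}$ transforms equivariantly under conjugation by elements of $G(\co)$, Proposition~\ref{minsklel1}~c) allows me to assume $\gamma\subset C^+\subset\A$, with associated chain of sectors in the class $\underline{C^+}$; I write $\Lo=V_0\subset E_0\supset\cdots\supset V_{r+1}=\lam$.

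Fix now $h\in\Stab_{G(\co)}(\lam)$, so $h$ fixes both $\Lo$ and $\lam$. Recall that $\cj^{\mathfrak a}$ carries a complete $\mathrm{CAT}(0)$ metric on which $G(\ck)$ acts by isometries and in which $\A$ is an isometrically embedded flat; since geodesics are unique, any isometry fixing two points fixes the geodesic between them and, a fortiori, its germ at either endpoint. I would prove by induction on $i$ that $h$ fixes $V_i$, the base case $i=0$ being $h\in G(\co)=\Stab_{G(\co)}(\Lo)$. Assuming $h.V_i=V_i$, the element $h$ fixes the geodesic $[V_i,\lam]$ and therefore its initial direction $\xi_i$, which is the germ of the ray $\br_{\ge 0}(\lam-V_i)$ and lies in the closed dominant chamber of the residue building $\cj^{\mathfrak a}_{V_i}$. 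It then remains to deduce that $h$ fixes $E_i$, for then it fixes its second endpoint $V_{i+1}$ and the induction advances.

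The heart of the argument, and the step I expect to be the main obstacle, is to pin down $E_i$ from $\xi_i$ when $\lam-V_i$ is singular: then $\xi_i$ lies on a wall of the residue chamber, and fixing it does not by itself force $h$ to fix the whole chamber. This is rescued by minimality. By Proposition~\ref{minsklel1}~b) the tail $(V_i\subset E_i\supset\cdots\supset\lam)$ lies in $V_i+C^+$, while by Remark~\ref{headsector} the whole gallery, hence this tail, lies in $\lam-C^+$. Setting $J_i=\{\alpha\in\Phi^+\mid\langle\alpha,\lam-V_i\rangle=0\}$, for any point $x$ of the tail the inequalities $\langle\alpha,x-V_i\rangle\ge 0$ and $\langle\alpha,\lam-x\rangle\ge 0$ sum to $\langle\alpha,\lam-V_i\rangle=0$ for $\alpha\in J_i$, so both vanish; thus the tail, and in particular $E_i$, is contained in the wall-face $\{x\mid\langle\alpha,x-V_i\rangle=0\ \text{for all}\ \alpha\in J_i\}$. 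In the residue $\cj^{\mathfrak a}_{V_i}$ this says that the face $(E_i)_{V_i}$ is a face of the simplex $\Psi_i$ carved out of the dominant chamber by the walls indexed by $J_i$; and $\Psi_i$ is exactly the carrier of $\xi_i$, since $\lam-V_i$ lies in the relative interior of the corresponding face of $C^+$. As $h$ fixes the interior point $\xi_i$ of $\Psi_i$ and acts simplicially, it fixes $\Psi_i$ together with all its faces, in particular $(E_i)_{V_i}$; since an edge is determined by its vertex and its germ, $h.E_i=E_i$. This closes the induction, yields $h\in\Stab_{G(\co)}(\gamma)$, and hence the desired bijectivity.
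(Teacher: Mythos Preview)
Your proof is correct and takes a genuinely different route from the paper's. The paper disposes of the proposition in one line of algebraic geometry: the map $\pi:\Sigma(\gamma_\lam)\to X_\lam$ of Proposition~\ref{pr:BS} is a $G(\co)$-equivariant desingularization, hence birational, hence an isomorphism over a nonempty $G(\co)$-stable open subset of $X_\lam$, which must then contain the unique open orbit $G(\co).\lam$; restricting $\pi$ gives $G(\co).\gamma\simeq G(\co).\lam$. You instead stay entirely inside the building and prove the stabilizer inclusion $\Stab_{G(\co)}(\lam)\subseteq\Stab_{G(\co)}(\gamma)$ directly via $\mathrm{CAT}(0)$ geometry: an element fixing both endpoints fixes the geodesic, hence its germ at each $V_i$, and minimality (through Proposition~\ref{minsklel1}~b) and Remark~\ref{headsector}) forces $(E_i)_{V_i}$ to be a face of the simplex carrying that germ. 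One small imprecision worth noting: the carrier $\Psi_i$ of $\xi_i$ in the residue building $\cj^{\mathfrak a}_{V_i}$ is cut out by the walls \emph{through} $V_i$, i.e.\ by $J_i\cap\Phi_{V_i}$ rather than all of $J_i$; but since your hyperplane constraint on $E_i$ uses all of $J_i$ it is only stronger than needed, and the inclusion $E_i\subseteq\Psi_i$ still follows (the conditions for walls not through $V_i$ being automatic because $E_i$ is a face containing $V_i$). The paper's argument is shorter but leans on Proposition~\ref{pr:BS}, itself only sketched; yours is self-contained, avoids algebraic geometry, and makes explicit where the minimality hypothesis is used.
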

\noindent
{\it Proof.} 
The map $\pi$ defined in Proposition \ref{pr:BS} is $G(\co)-$equivariant and, as a desingularization of $X(\lam)$, it must be an isomorphism over an open subset of $X(\lam) = \overline{G(\co).\lam}$. So it restricts to a bijection $G(\co). \gamma \simeq G(\co). \lam$.
\qed
\vskip5pt\noindent
Summarizing we have proved:
\begin{corollary}\label{cor:summary}
\begin{enumerate}
  \item Let $\gamma$ be a minimal one-skeleton gallery starting in $\Lo$, then the $G(\co)$-orbit
  of $\gamma$ contains a unique element completely contained in the dominant Weyl chamber.
  \item Let $\gamma,\gamma'$ be two minimal one-skeleton galleries starting in $\Lo$.The two
  galleries are conjugate under the action of $G(\co)$ if and only if they have the same galleries of types.
  \item Let $\gamma=(V_0=\Lo\subset E_0\supset V_1 \subset E_1 \supset \cdots \supset
V_r\subset E_r\supset V_{r+1})$ be a minimal one-skeleton gallery contained in the dominant Weyl chamber
and let $\lam=V_{r+1}\in X^+$ be the target. The projection $G(\co).\gamma\mapsto G(\co).\lam\subset G(\ck)/G(\co)$
is a bijection.
\end{enumerate}
\end{corollary}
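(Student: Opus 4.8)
The plan is to read all three assertions off the two preceding propositions, so that essentially no new work is required. Statement~(1) is nothing but Proposition~\ref{minsklel1}c): that proposition already produces, inside the orbit $G(\co).\gamma$, a gallery contained in $C^+$ and proves its uniqueness by the inductive argument at the end of its proof.

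For statement~(2), the direction ``conjugate $\Rightarrow$ equal galleries of types'' is immediate, since the $G(\co)$-action on $\cj^{\mathfrak a}$ is type-preserving (a fact already used in the proof of Proposition~\ref{minsklel1}c). For the converse I would pass to dominant representatives: by~(1) the galleries $\gamma$ and $\gamma'$ admit unique representatives $\delta\in G(\co).\gamma$ and $\delta'\in G(\co).\gamma'$ lying in $C^+$ and starting at $\Lo$, and it is enough to prove $\delta=\delta'$. Both are minimal galleries in $C^+$ issuing from $\Lo$ with one and the same gallery of types $t_\gamma=t_{\gamma'}$, so I would show that such a gallery is uniquely determined by its type. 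This is exactly the inductive uniqueness already carried out for Proposition~\ref{minsklel1}c): by part~b) of that proposition the chain of sectors may be taken to be $(C^+,C^+(V_1),\ldots,C^+(V_r))$, at the $i$-th step the edge $E_i$ is the unique face of its prescribed type lying in $C^+(V_i)$ and containing $V_i$, and then $V_{i+1}$ is forced as the remaining vertex of $E_i$. Hence $\delta=\delta'$ and $\gamma,\gamma'$ share a single $G(\co)$-orbit.

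Statement~(3) is then immediate from Proposition~\ref{orbit}: a minimal gallery contained in $C^+$ has special target $\lam=V_{r+1}$ in $C^+$, so that proposition applies directly and yields the bijection ${\mathbf g}.\gamma\mapsto{\mathbf g}.\lam$ between $G(\co).\gamma$ and $G(\co).\lam$. The only genuinely substantive point is the converse in~(2), namely that the dominant representative is pinned down by the purely combinatorial datum $t_\gamma$; I expect this to be the main (if modest) obstacle. It is resolved by the uniqueness of a face of prescribed type inside a fixed sector, which in turn comes from the residue-building description $\cj^{\mathfrak a}_{V_i}$ recalled in Section~\ref{AppartChamberBuilding}. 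Everything else is a direct appeal to the two propositions just proved.
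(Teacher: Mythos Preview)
Your proposal is correct and matches the paper's approach: the corollary is stated there as a summary (``Summarizing we have proved'') of Propositions~\ref{minsklel1} and~\ref{orbit}, with no separate proof given. Your only addition is spelling out the converse in~(2), and the argument you give---that two dominant minimal galleries of the same type must coincide, by the same face-by-face induction used for uniqueness in Proposition~\ref{minsklel1}c)---is exactly the intended one.
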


\subsection{Positively folded one-skeleton galleries}
Consider a vertex $V$ of a gallery together with the two edges $E$ and $F$. To simplify the notation, 
we call such a sequence $(V_0\subset E\supset V\subset F\supset V_1)$ of vertices 
and edges a {\it two steps gallery}. Note that none of the vertices needs to be a special vertex,
and we often omit $V_0$ and $V_1$. A {\it two steps gallery} is called {\it minimal}
if there exists a sector $\mathfrak s$ with vertex $V_0$ such that 
$E\subset  \mathfrak s$ and $F\subset {\mathfrak s}(V)$. An equivalent condition
is the following: there exists a sector $\mathfrak s'$ with vertex $V$ such that $E\subset \mathfrak s'$ and
$F\subset -\mathfrak s'$.
\vskip 5pt\noindent
\begin{dfn}\label{posfoldefn} \rm
We say that a two steps gallery $(E\supset V\subset F')\subset\A$ is obtained from $(E\supset V\subset F)\subset\A$ 
by a {\it positive folding} if there exists an affine root $(\beta,n)$ such that 
$$
V\in \ch_{\beta,n}, \quad F'=s_{\beta,n}(F)\quad\text{and}\quad \ch_{\beta,n}\text{\ separates\ } F \text{\ and\ }C^-(V) \text{\ from\ } F'.
$$
A two steps gallery $(E\supset V\subset F)$ in $\A$ is called {\it positively folded} if either the gallery is a minimal,
or if there exist faces $F_0,\ldots, F_s$ containing $V$ such that:
\begin{itemize}
\item $(E\supset V\subset F_0)$ is minimal and $ F_s=  F$,
\item $\forall j=1,\ldots,s$: $(E\supset V\subset F_j)$ is obtained from $(E\supset V\subset F_{j-1})$
by a positive folding.
\end{itemize} 
\end{dfn}
In the residue building at a vertex $V$ we say that $(E_V,F_V)$ is a {\it minimal pair} if there exists two 
opposite sectors $\sfrak$ and $-\sfrak$ with vertex $V$ such that $E \subset \sfrak$ and 
$F\subset -\sfrak$. We use this notion to get the following equivalent definition for a positively folded 
two-step gallery, which uses more the language of the residue building:
\begin{dfn}\label{posfoldefn2}\rm
The two-step gallery $(E\supset V\subset F)$ in $\A$ is called {\it positively folded} if there exist 
\begin{itemize}
\item faces $F_{0,V},\ldots,F_{s,V}$ such that $(E_V, F_{0,V})$ is a minimal pair, and $F_{s,V}=F_{V}$,
\item for all $j=1,\ldots,s$ there exists an affine root $(\beta_j,n_j)$ such that $\beta_j\in \Phi_V$,
$V\in \ch_{\beta_j,n_j}$, $s_{\beta_j,n_j}(F_{j-1,V})=F_{j,V}$ and $\ch_{\beta_j,n_j}$ separates $C^-_V$ and $F_{j-1,V}$ from $F_{j,V}$. 
\end{itemize}
\end{dfn}
\begin{rems}\rm
1) Note that two faces $E_V$ and $F'_V$ could be opposite in $\cj^{\mathfrak a}_V$ (i.e. there exists two 
opposite chambers $D$ and $-D$ such that $E_V\subset D$ and $F'_V\subset -D$) without being a minimal pair. 
This can be seen in a root system of type ${\tt B_2}$.

2) Note that neither the face $F_0$ nor the sequence of reflections are unique in Definition \ref{posfoldefn}
and Definition \ref{posfoldefn2}. 
Below is an example for a root system of type ${\tt B_2}$. The dot in the middle is the vertex $V$. 
The fact that $(E\supset V\subset F)$ is positively folded can be seen using one of the two faces $F_0$ 
and some reflections with respect to the drawn walls.

\bigskip
\begin{center}
 \setlength{\unitlength}{1cm}
\begin{picture}(4,4)
\thinlines
\put(0,0){\line(1,1){4}}
\put(0,2){\line(1,0){4}}
\put(0,4){\line(1,-1){4}}
\put(2,0){\line(0,1){4}}

\put(2,2){\circle*{0.15}}
\put(1,3){$E$}
\put(2,3){$F$}
\put(1.9,1){ $F_0$}
\put(2.9,2.1){ $F_0$}
\put(0,0){$\quad V+C^-$}

\thicklines
\put(2,2){\line(-1,1){1}}
\put(2,2){\line(0,1){1}}
\put(2,2){\line(1,0){1}}
\put(2,2){\line(0,-1){1}}
\end{picture}
\end{center}
\end{rems}

Since the equivalence classes of sectors are in 
bijection with the Weyl chambers, we can endow the set of equivalence classes with the Bruhat order:
$\underline\sfrak\ge \underline\sfrak'$ iff $\underline\sfrak=\underline{\tau(C^+)}$, $\underline\sfrak=\underline{\kappa(C^+)}$
and $\tau\ge\kappa$.
Minimal galleries $\gamma$ are characterized by the property that one can find an associated chain of sectors
$\underline\sfrak(\gamma)=(\sfrak_0,\ldots\sfrak_r)$ such that we have for the classes:
$\underline\sfrak_0= \ldots=\underline\sfrak_r$.

We are going to weaken this condition for
the {\it combinatorial positively folded one-skeleton galleries}:
\begin{dfn}\rm
\label{dfnPosFold1}
For a dominant coweight $\lam$, let $\gamma_\lam$ be a minimal one-skeleton gallery contained in 
$C^+$, starting in $\Lo$ and ending in $\lam$. A combinatorial one-skeleton gallery of type $t_{\gamma_\lam}$:
$$
\gamma=(V_0=\Lo\subset E_0\supset\ldots\subset E_r\supset V_{r+1})\subset\A
$$ 
is called {\it globally positively folded} or just {\it positively folded} if 
\begin{itemize}
\item[{\it i)}] the gallery is locally positively folded, i.e. the two-step galleries $(E_{i-1}\supset V_i\subset E_i)$ are
positively folded for all $i=1,\ldots,r$;
\item[{\it ii)}] there exists a chain of sectors $\underline\sfrak(\gamma)=(\sfrak_0,\ldots\sfrak_r)$
such that for all $i=0,\ldots,r$: $V_i$ is the vertex of $\sfrak_i$ and $E_i\subset \sfrak_i$, and
$\underline\sfrak_0\ge \ldots\ge \underline\sfrak_r$.
\end{itemize}
The sequence of sectors respectively the sequence of Weyl group elements $\df(\gamma)=(\tau_0,\ldots,\tau_r)$
(where $\underline{\tau_i(C^+)}= \underline\sfrak_i$) is called a defining chain for $\gamma$.
\end{dfn}
\begin{rem}\label{locminbutnotglobmexamposfol}\rm
A defining chain for a gallery is not necessarily unique. If $\tau_0=\ldots=\tau_r$,
then the gallery is obviously minimal. Note that the gallery $\gamma$ in Example~\ref{locminbutnotglobminexam}
is locally minimal and hence locally positively folded, i.e. the two-step galleries $(E_0\supset V_1\subset E_1)$ and 
$(E_1\supset V_2\subset E_2)$ are positively folded, but the gallery is not globally positively folded.
\end{rem}
\subsection{Local and global properties in special cases}
\label{sec:increaseconcat}
By Remark~\ref{locminbutnotglobmexamposfol} and Example~\ref{locminbutnotglobminexam} we see that
minimality and being positively folded are in general not local properties. In this section we will 
show now that there are many interesting cases where actually {\it local minimality} implies {\it global minimality}
and {\it locally positively folded} implies {\it globally positively folded}.

Fix a dominant coweight $\lam$ and let $\gamma_{\underline{\lambda}}$ 
be a concatenation of the galleries $\gamma_\omega$ associated to the fundamental 
coweights as in Example \ref{exGalCoweightarbi}. 
More precisely, recall that we fixed a total order on the set of fundamental coweights: $\omega_1,...,\omega_n$, and 
if $\lambda = \sum a_i\omega_i$, the associated minimal gallery $\gamma_{\underline{\lambda}}$ is
the concatenation of the correspondingly displayed galleries, see Example \ref{exGalCoweightarbi}:
$$
\begin{array}{rcl}
\gamma_{\underline{\lambda}} & = & 
\underbrace{\gamma_{\omega_1} *\cdots *\gamma_{\omega_1}}_{a_1 \hbox{\footnotesize times }} * 
\cdots *\underbrace{\gamma_{\omega_n} *\cdots *\gamma_{\omega_n}}_{a_n \hbox{\footnotesize times }}\\
 & & \\
 & = & (0 = V^c_0\subset E^c_0\supset V^c_1 \subset E^c_1 \supset
\cdots \supset V^c_r\subset E^c_r\supset V^c_{r+1} = \lambda) \ ,
\end{array}
$$ where the $V^c_j$'s and the $E^c_j$'s are vertices and faces of the dominant Weyl chamber. 

Let $\supp\lam$ be the set of nodes $N_i$ of the Dynkin diagram such that $a_i\not=0$.
We make a special assumption on the enumeration of the nodes: 
\begin{itemize}
  \item[$(*)$] If $N_i\in \supp\lam$, then none of the nodes $\{N_j\mid j<i\}$ is
  connected in the Dynkin diagram with one of the nodes $\{N_j\mid j>i\}$.
  
\end{itemize}
If the Dynkin diagram has no branches, i.e. the root system of $G$ is of type ${\tt A,B,C,F_4}$ or ${\tt G_2}$, 
then the Bourbaki enumeration of the nodes satisfies the property $(*)$ for all dominant coweights. If $G$ is of type ${\tt D}$ or ${\tt E}$
and $\supp \lam$ is contained in a subdiagram of type ${\tt A}$, then
it is easy to see that one can find an enumeration satisfying the condition $(*)$. 

\begin{proposition}
\label{prLocMiniEquivcomb}
Suppose the enumeration of the nodes of the Dynkin diagram of $G$ satisfies the condition 
$(*)$ for  $\supp\lam$. Let $\gamma\subset \A$ be a combinatorial one-skeleton gallery of 
the same type as $\gamma_{\underline\lam}$.
If $\gamma$ is locally positively folded, then $\gamma$ is globally positively folded.
\end{proposition}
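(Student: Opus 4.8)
The plan is to construct an explicit defining chain, since local positive foldedness is given and the only ingredient still missing for global positive foldedness (Definition~\ref{dfnPosFold1}) is a weakly Bruhat-decreasing chain of sector classes $\underline\sfrak_0\ge\cdots\ge\underline\sfrak_r$ with apex $V_i$ and $E_i\subset\sfrak_i$. First I would translate this into Weyl group combinatorics. Each edge $E_i$ is a displaced fundamental face pointing, from its lower vertex $V_i$, in a direction $w_i(\om_{j_i})$ that is a Weyl conjugate of the fundamental coweight $\om_{j_i}$ giving its type. A sector $\sfrak$ with apex $V_i$ contains $E_i$ exactly when its class is $\underline{\tau(C^+)}$ for $\tau$ in the coset $w_iW_{\om_{j_i}}$, because $\tau^{-1}w_i(\om_{j_i})\in\overline{C^+}$ forces $\tau^{-1}w_i\in W_{\om_{j_i}}=\Stab_W(\om_{j_i})$. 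Thus the task becomes: choose coset representatives $\tau_i\in w_iW_{\om_{j_i}}$ that are weakly decreasing for the Bruhat order.

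Step 1 (localize the folding condition). At each vertex $V_i$ I would read off what local positive foldedness of $(E_{i-1}\supset V_i\subset E_i)$ says inside the residue building $\cj^{\mathfrak a}_{V_i}$, using Definition~\ref{posfoldefn2} and the isomorphism with the spherical building of $\ch_{V_i}$ recalled in Section~\ref{AppartChamberBuilding}. The backward face $(E_{i-1})_{V_i}$ and the forward face $(E_i)_{V_i}$ are related by a minimal pair followed by positive folds across walls $\ch_{\beta_j,n_j}$ with $\beta_j\in\Phi_{V_i}$, each separating $C^-_{V_i}$ from the folded face. I would show this is equivalent to a Bruhat inequality between the relevant cosets inside $W_{V_i}$, yielding the local relation $\tau_i\le\tau_{i-1}$ for a suitable choice of representatives, with equality precisely when the two-step gallery is minimal. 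Pinning down the correct orientation of this inequality against the convention $\underline{\tau(C^+)}\ge\underline{\kappa(C^+)}\Leftrightarrow\tau\ge\kappa$ is the first thing to get right.

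Step 2 (block decomposition and the role of $(*)$). Group $\gamma$ into blocks $\gamma=\Gamma_1*\cdots*\Gamma_n$, where $\Gamma_i$ collects the $a_i$ pieces of type $\om_i$, matching the grouped shape of $\gamma_{\underline\lam}$. Within a single block all edges share the fundamental type $\om_i$, hence the same stabilizer $W_{\om_i}$; here the local relations of Step~1 all live over one parabolic, and I would chain them into a decreasing sequence by a direct induction on the length of the block, the homogeneity preventing the clash seen in Example~\ref{locminbutnotglobminexam}. The genuine difficulty is gluing across a block boundary, i.e. at a special transition vertex $V$ where the type jumps from $\om_a$ to $\om_b$ with $a<b$: a priori the representative forced on the first edge of $\Gamma_b$ by the local fold at $V$ need not be $\le$ the last representative produced for $\Gamma_a$. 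This is exactly where hypothesis $(*)$ enters: it guarantees that the simple reflections already involved in the representatives coming from the blocks $\Gamma_{\le a}$ lie in a subdiagram of the Dynkin diagram disconnected from the nodes governing the $\om_b$-direction. Consequently these two groups of reflections sit in commuting standard parabolic subgroups, the Bruhat order on their product factorizes, and the accumulated decreasing chain can be extended into $\Gamma_b$ without violating monotonicity.

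Step 3 (assemble and conclude). Running Steps~1--2 from left to right produces representatives $\tau_0\ge\cdots\ge\tau_r$ with $\tau_i\in w_iW_{\om_{j_i}}$, hence sectors $\sfrak_i$ of apex $V_i$ containing $E_i$ with $\underline\sfrak_0\ge\cdots\ge\underline\sfrak_r$. Together with the hypothesis that $\gamma$ is locally positively folded, this is precisely condition (ii) of Definition~\ref{dfnPosFold1}, so $\gamma$ is globally positively folded. I expect the main obstacle to be the cross-block gluing in Step~2: proving that $(*)$ decouples the reflection groups strongly enough that a monotone global choice exists, and checking that no fold at a later boundary retroactively forces an earlier representative to increase. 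The within-block homogeneity and the commutation furnished by $(*)$ are the two mechanisms that rule out the failure exhibited in Example~\ref{locminbutnotglobminexam}, where the non-grouped order $\om_1,\om_2,\om_1$ violates $(*)$ and global foldedness indeed fails.
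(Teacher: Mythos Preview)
Your overall strategy matches the paper's: translate the problem to choosing coset representatives $\tau_i\in w_iW_{\om_{j_i}}$, extract from each local positive fold a pairwise Bruhat inequality $\sigma_i\ge\eta_{i+1}$ between some representatives of consecutive cosets, and then use condition $(*)$ to thread these into a single decreasing chain. Your preamble and Step~1 are correct and agree with the paper's first step.

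There is, however, a genuine imprecision in Step~2 that would block the argument if taken literally. You claim that $(*)$ forces ``the simple reflections already involved in the representatives coming from the blocks $\Gamma_{\le a}$'' to lie in a subdiagram disconnected from the $\om_b$-direction. This is false: the $\tau_j$ already constructed are arbitrary elements of $W$ and may involve every simple reflection. What $(*)$ actually provides is a factorization of the \emph{stabilizer} at the current step: if the current edge has type $\om_{k_0}$ and one sets $I=\{N_\ell\mid\ell<k_0\}$, $J=\{N_\ell\mid\ell>k_0\}$, then $W_{\om_{k_0}}=W_{I\cup J}=W_I\times W_J$ with $W_I$ and $W_J$ commuting elementwise. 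The paper's induction (edge by edge, not block by block) then writes each previously chosen $\tau_j=\bar\tau_j x_j y_j$ with $\bar\tau_j$ minimal modulo $W_{I\cup J}$, $x_j\in W_I$, $y_j\in W_J$, and \emph{retroactively replaces every $y_j$ by the longest element $y$ of $W_J$}. This is harmless because all earlier edges have type $\om_\ell$ with $\ell\le k_0$, so $W_J\subset W_{\om_{E_j}}$ and the cosets $\kappa_j$ are unchanged; and the chain $\tau_0\ge\cdots\ge\tau_i$ survives because $\bar\tau_j x_j$ is minimal modulo $W_J$. After this inflation one can take $\tau_{i+1}=\bar\eta_{i+1}t_{i+1}$ (notation as in the paper) and check $\tau_i\ge\tau_{i+1}$. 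This retroactive adjustment of the $W_J$-component is precisely the mechanism you anticipate needing at the end of Step~2, but your description of what $(*)$ makes commute is wrong and would not produce it; once you replace your disconnection claim by the stabilizer factorization and the inflation trick, your outline becomes the paper's proof.
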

\begin{rem}\rm
A locally positively folded combinatorial one-skeleton can be viewed as the gallery version 
of a weakly standard Young tableau defined by Lakshmibai, Musili and Seshadri in \cite{LMS}, \S 12. 
The proof below is an adaption of their proof that in special cases (like the ones above)
weakly standard Young tableaux are standard Young tableaux.
\end{rem}
\proof
Let $\gamma$ be a combinatorial one-skeleton gallery of the same type as 
$\gamma_{\underline\lam}$, say 
$$
\gamma=(V_0=\Lo\subset E_0\supset\ldots\subset E_r\supset V_{r+1})\subset\A.
$$
The gallery $\gamma$ is a concatenation $\gamma=\gamma_1*\ldots*\gamma_N$ of combinatorial one-skeleton 
galleries, each being of the same type as $\gamma_{\omega}$ for some fundamental weight $\omega$ 
corresponding to one of the nodes in the support of $\lam$. By abuse of notation we say that
an edge $E_i$ is {\it of weight type $\om_{E_i}$} if $E_i$ occurs in the concatenation within a one-skeleton gallery
of the same type as $\gamma_{\omega_{E_i}}$, and we say that $E_i$ is of {\it weight class} $\kappa_i\in W/W_{\om_{E_i}}$
if the ray $\br_{\ge 0}\kappa_i(\om_{E_i})$ coincides with the ray
starting in $V_i$ and passing through $V_{i+1}$, up to a displacement by $V_i$.
The gallery $\gamma$ is hence completely described by the sequence of Weyl group classes
$(\kappa_0,\ldots,\kappa_r)$. Further, given a sector $\sfrak$ with vertex $V_i$, then $E_i\subset \sfrak$
only if $\underline\sfrak=\tau(C^+)$ for an element $\tau\in W$ such that $\tau\equiv \kappa_i \bmod W_{\om_{E_i}}$.

It follows that to give a sequence of sectors $(\sfrak_0,\ldots,\sfrak_r)$ such that $\sfrak_i$ has vertex $V_i$ and
$E_i\subset \sfrak$ is equivalent to give a sequence of Weyl group elements $(\tau_0,\ldots,\tau_r)$ such that
$\tau_i\equiv \kappa_i \bmod W_{\om_{E_i}}$ for $i=0,\ldots,r$. The gallery is globally positively folded if and only
if one can choose the Weyl group elements such that in addition $\tau_0\ge \ldots\ge\tau_r$.

As a first step, we show that the local minimality implies for all $i=0,\ldots,r-1$ the 
existence of pairs $(\sigma_i,\eta_{i+1})\in W\times W$ such that $\sigma_i\ge \eta_{i+1}$,
$\sigma_i \equiv \kappa_i \bmod W_{\om_{E_i}}$ and 
$\eta_{i+1} \equiv \kappa_{i+1} \bmod W_{\om_{E_{i+1}}}$.
For the positively folded two-step gallery $(E_i\supset V_{i+1}\subset E_{i+1})$
let  $(E_i\supset V_{i+1}\subset F_0)$ be a corresponding minimal gallery with sector $\mathfrak t_0$,
i.e., $\mathfrak t_0$ has vertex $V_{i}$, $E_i\subset \mathfrak t_0$ and 
$F_0\subset \mathfrak t_0'= \mathfrak t_0(V_{i+1})$. 
If $F_0=E_{i+1}$, then set $\mathfrak t_1= \mathfrak t_0'$. 
If $F_0\not=E_{i+1}$, then let
$(\beta,n)$ be the affine root such that $F_1=s_{\beta,n}(F_0)$ is obtained by a positive folding.
Since $\ch_{\beta,n}$ separates $F_0$ and $C^-(V)$ from $F_1$, it separates also $\mathfrak t'_0$ and $C^-(V)$ from $\mathfrak t'_1=s_{\beta,n}(\mathfrak t'_0)$, so 
$\underline{\mathfrak t}_0\ge \underline{\mathfrak t}'_1$.
By repeating the argument if $E_{i+1}\not=F_1$, we obtain successively the sector $\mathfrak t_1$ with vertex $V_{i+1}$
such that $E_{i+1}\subset \mathfrak t_1$ and $\underline{\mathfrak t}_0\ge \underline{\mathfrak t}_1$. 
Let $\sigma_i,\eta_{i+1}\in W$ be such that $\underline{\sigma_i(C^+)}= \underline{\mathfrak t}_0$ and
$\underline{\eta_{i+1}(C^+)}= \underline{\mathfrak t}_1$, so $\sigma_i\ge \eta_{i+1}$ and $\sigma_{i} \equiv \kappa_i \bmod W_{\om_{E_i}}$,
$\eta_{i+1} \equiv \kappa_{i+1} \bmod W_{\om_{E_{i+1}}}$.

We start now to define the sequence of Weyl group elements $\tau_0,\ldots,\tau_r$
by choosing for $\tau_0\in W$ the maximal representative of the class $\kappa_0$.
Suppose we have already defined $\tau_0,\ldots,\tau_i\in W$ such that $\tau_0\ge \ldots\ge \tau_i$
and $\tau_j\equiv \kappa_j \bmod W_{\om_{E_j}}$ for $j=0,\ldots,i$. 
Let $k_0$ be such that the node $N_{k_0}$ corresponds to the fundamental weight $\om_{E_{i}}$,
let $I$ be the set of nodes $I=\{N_\ell\mid \ell < k_0\}$ and set $J=\{N_\ell\mid \ell> k_0\}$. Denote by $W_I$, $W_J$
and $W_{I\cup J}$ the subgroups of $W$ generated by the $s_{\alpha}$ associated to the simple roots
corresponding to the nodes in $I$, $J$ and $I\cup J$ respectively. The condition $(*)$ implies
that the elements in $W_I$ commute with the elements in $W_J$.
By abuse of notation we write $\bar\tau_j $ not only for the class of $\tau_j$ in $W/W_{\om_{E_i}}=W/W_{I\cup J}$,
but also for the minimal representative of this class in $W$. 
So we can write $\tau_j=\bar \tau_j x_j y_j$, where $x_j\in W_I$ and $y_j\in W_J$. Recall that $x_jy_j=y_jx_j$
by condition $(*)$. 

Since $\bar \tau_j x_j$ is a minimal representative in $W$ of the class $(\tau_j\bmod W_J)$, 
the inequalities $\tau_0\ge\ldots\ge\tau_i$ imply the inequalities $\bar \tau_0x_0\ge\ldots\ge \bar \tau_ix_i.$ 
Let now $y$ be the maximal element in $W_J$. Since $y$ and the $y_0,\ldots,y_i$ fix the fundamental weight
$\omega_\ell$ for $\ell <k_0$, we can assume without loss of generality $y_j=y$ for all $j=0,\ldots,i$,
because if one replaces the $y_j$ by $y$, then one still has the desired properties for all $j=0,\ldots,i$:
$$
\tau_0= \bar\tau_0x_0y\ge \tau_1= \bar\tau_1x_1y\ge\ldots\ge \tau_i= \bar\tau_ix_iy, \quad 
\text{and}\quad\tau_j\equiv \kappa_j\mod W/W_{\om_{E_j}}.
$$
 To extend the sequence and define $\tau_{i+1}$, we consider now the pair $\sigma_i\ge\eta_{i+1}$ defined 
at the beginning. Recall
that $\sigma_{i} \equiv \kappa_i=\bar\tau_i \bmod W_{\om_{E_i}}$,
$\eta_{i+1} \equiv \kappa_{i+1} \bmod W_{\om_{E_{i+1}}}$. We can write $\sigma_i=\bar\tau_ip_iq_i$ and 
$\eta_{i+1}=\bar\eta_{i+1} r_{i+1} t_{i+1}$, where $p_i,r_{i+1}\in W_I$, $q_i, t_{i+1}\in W_J$ and $\bar\eta_{i+1}$ 
denotes the class of $\eta_{i+1}$ in $W/W_{I\cup J}$ as well as the minimal representative of the class in $W$.

Set $\tau_{i+1}=\bar\eta_{i+1}t_{i+1},\quad\text{then}\quad \tau_{i+1}\equiv \kappa_{i+1} \bmod  W_{\om_{E_{i+1}}}$
because $r_{i+1}$ fixes $\om_{E_{i+1}}$. Further, $\tau_i=\bar\tau_i x_iy\ge \tau_{i+1}=\bar\eta_{i+1}t_{i+1}$ because
$$
(\sigma_i\mod W_{\om_{E_{i}}})=\kappa_i = \bar\tau_i \ge (\eta_{i+1}\mod W_{\om_{E_{i}}})=\bar \eta_{i+1}
$$
and, by construction, $y\ge t_{i+1}$. Proceeding by induction gives the desired defining chain.
\qed
\subsection{Semistandard Young tableaux and positively folded one-skeleton galleries}
To characterize the tableaux corresponding to positively
folded galleries, recall that the Bourbaki enumeration of the fundamental
coweights satisfies the condition $(*)$ in section~\ref{sec:increaseconcat} for the groups of type ${\tt A_n}$, ${\tt B_n}$ and ${\tt C_n}$.
\begin{proposition}\label{semisyoungtableau}
The bijection in Proposition~\ref{gallerytableau}
induces a bijection between the positively folded galleries and the semistandard tableaux.
\end{proposition}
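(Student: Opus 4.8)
\proof
The map $\gamma\leftrightarrow\mathcal T_\gamma$ of Proposition~\ref{gallerytableau} is already a bijection between all combinatorial galleries of the same type as $\gamma_{\underline\lambda}$ and all Young tableaux of shape $p_{\underline\lambda}$ of the relevant type ${\tt A}_n$, ${\tt B}_n$ or ${\tt C}_n$; its restriction to any subset is automatically injective, so the only thing to prove is that $\gamma$ is positively folded if and only if $\mathcal T_\gamma$ is semistandard. The plan is to make both conditions \emph{local} --- to rephrase each as a condition attached to a single interior vertex of $\gamma$, respectively to a single pair of adjacent columns of $\mathcal T_\gamma$ --- and then to match these local conditions one by one.

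First I would dispose of the global clause in the definition of ``positively folded''. Since the Dynkin diagrams of type ${\tt A}_n,{\tt B}_n,{\tt C}_n$ have no branches, the Bourbaki enumeration of the fundamental coweights satisfies condition $(*)$ of Section~\ref{sec:increaseconcat} for every dominant $\lambda$, as recorded there. Hence Proposition~\ref{prLocMiniEquivcomb} applies and shows that, for a gallery of the same type as $\gamma_{\underline\lambda}$, being globally positively folded is equivalent to being locally positively folded (the reverse implication being immediate from part {\it i)} of Definition~\ref{dfnPosFold1}). Thus $\gamma$ is positively folded if and only if every two--step gallery $(E_{i-1}\supset V_i\subset E_i)$ is positively folded in the sense of Definition~\ref{posfoldefn2}. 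On the tableau side, semistandardness adds, to the column and column--pair conditions that already characterise being a tableau of the given type, only the requirement that the entries be weakly increasing along the rows; and this is exactly one condition for each pair of adjacent columns. Under the construction of Section~\ref{Youngcombi} the columns of $\mathcal T_\gamma$ are the edges $E_0,\dots,E_r$ read in order, so consecutive edges correspond to adjacent columns and the interior vertices $V_1,\dots,V_r$ (both the special vertices where two fundamental subgalleries are concatenated and the non--special middle vertices inside a non--minuscule block) are in bijection with the adjacent column pairs. The Proposition therefore reduces to the following local statement: the two--step gallery $(E\supset V\subset F)$ is positively folded if and only if the column of $E$ and the column of $F$ are weakly increasing in the rows.

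To prove this local equivalence I would work in the residue building $\cj^{\mathfrak a}_V$ and use Definition~\ref{posfoldefn2}: $(E\supset V\subset F)$ is positively folded exactly when $F_V$ is reached, from a face forming a minimal pair with $E_V$, by a sequence of reflections $s_{\beta_j,n_j}$ with $\beta_j\in\Phi_V$, each moving its argument strictly away from $C_V^-$. Translating the minimal--pair condition and each such positive reflection through the explicit parametrisation of the orbits $W\cdot\omega_i$ by (signed) one--column tableaux established in the lemmas of Section~\ref{Youngcombi} turns the existence of such a sequence into an inequality between the entries of the two columns, namely weak increase along the rows with respect to the ordered alphabet $\mathfrak N=\{1<\dots<n<\bar n<\dots<\bar 1\}$. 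I would carry this out according to the types of $E$ and $F$: in type ${\tt A}_n$ all edges are minuscule and a single column comparison suffices; in types ${\tt B}_n,{\tt C}_n$ one treats separately the comparison of the two columns of one non--minuscule block (the middle vertex, where the column--pair relation is already built into the bijection) and the comparison across the concatenation of two blocks, tracking the sign changes that pass between $k$ and $\bar k$.

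The hard part will be precisely this signed combinatorial verification in types ${\tt B}_n$ and ${\tt C}_n$: one must check that a positive folding --- a reflection along a wall of $\Phi_V$ separating $C_V^-$ and the old face from the new one --- corresponds, under the encoding of weights by entries of $\mathfrak N$, to exactly the rearrangements permitted by weak increase of the rows, and that the even/odd parity of sign changes imposed in the definitions of ${\tt C}_n$-- and ${\tt B}_n$--tableaux is respected. This is the gallery--theoretic analogue of the Lakshmibai--Musili--Seshadri computation that, for these classical types, weakly standard tableaux are standard; as for Proposition~\ref{prLocMiniEquivcomb}, I would model the argument on \cite{LMS}.
\qed
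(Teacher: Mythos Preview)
Your plan is correct and follows essentially the same route as the paper: reduce to the local two--step condition via Proposition~\ref{prLocMiniEquivcomb}, and then match ``positively folded at $V_i$'' with ``rows weakly increasing across the pair of columns $C_{i-1},C_i$''. One remark that would sharpen your local argument: the paper does not carry out the signed--alphabet verification directly, but routes both sides through the Bruhat order. Namely, it uses the known fact (from \cite{BilLakshmi}, \cite{LMS}) that the row condition on two columns is equivalent to the existence of lifts $\tilde\sigma\ge\tilde\tau$ in $W$ (or in $W^v_{V_i}$ at a non--special vertex), and then observes that a sequence of positive folds at $V_i$ is exactly a chain of reflections decreasing in Bruhat order applied to the sector containing the minimal continuation. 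This makes the ``hard part'' you anticipate a citation rather than a computation; the parity constraints you mention are already absorbed into the definition of the tableau and need not be re--checked.
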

\begin{proof}
By Proposition~\ref{prLocMiniEquivcomb}, a locally positively folded gallery is automatically globally positively folded.
Consider two consecutive faces of dimension one in the gallery: $(V_{i-1}\subset E_{i-1}\supset V_i\subset E_i)$.
Then either $E_{i-1}=V_{i-1}+\sigma(\{t\omega_j\mid t\in[0,1]\})$ or $E_{i-1}=V_{i-1}+\sigma(\{t\omega_j\mid t\in[0,\frac{1}{2}]\})$
for some $j$ and some $\sigma\in W/W_{\omega_j}$, and $E_{i}=V_{i}+\tau(\{t\omega_k\mid t\in[0,1]\})$ or $E_{i}=
V_{i}+\tau(\{t\omega_k\mid t\in[0,\frac{1}{2}]\})$
for $k=j$ or $k=j+1$ and some $\tau\in W/W_{\omega_k}$ (for a more precise description of the possible $\tau$ in the second case, see Equation (\ref{nonmini})). Denote by $C_{i-1},C_{i}$ the columns in the Young tableaux
corresponding to the weights $\sigma(\omega_j)$ and $\tau(\omega_k)$. It remains to show that the condition
{\it positively folded at $V_i$} is equivalent to the condition that the entries in the rows of the tableaux consisting of the
two columns $C_{i-1}$ (on the right side) and $C_{i}$ are weakly increasing.

It is easy to verify that the condition on the rows is equivalent to
$\sigma\ge \tau$ in $W/W_{\omega_j}$ if $\omega_j=\omega_k$ (see \cite{BilLakshmi}, Chapter 3), respectively there exists lifts
$\tilde\sigma\in W$ of $\sigma$ and $\tilde\tau\in W$ of $\tau $ such that $\tilde\sigma\ge \tilde\tau$ (see \cite{LMS}).
Suppose first $V_i$ is a special point. The condition $\tilde\sigma\ge \tilde\tau$ implies
the condition positively folded: one starts with the sector $V_i+\tilde\sigma(C^+)$ which contains
a conjugate of $E_i$ forming a minimal pair with $E_{i-1}$. If $\tilde\sigma> \tilde\tau$, then
one can find a sequence of reflections such that $\tilde\sigma>s_{\beta_1}\tilde\sigma>\ldots>\tilde\tau$
and the length decreases in each step by one. It follows that the corresponding folds at $V_i$
are all positive. The reverse direction is proved in the same way: start with a minimal pair $E_{i-1}\supset V_i\subset E_{i,0}$
for $E_{i-1}\supset V_i\subset E_i$, let $\tilde\sigma(C^+)$ be the chamber such that $E_{i-1}\subset V_{i-1}+\tilde\sigma(C^+)$
and $E_{i,0}\subset V_{i}+\tilde\sigma(C^+)$, applying the positive folds to the sector $V_{i}+\tilde\sigma(C^+)$
yields a sector $V_{i}+\tilde\tau(C^+)$ containing $E_i$, and, since the folds are positive, one has
$\tilde\sigma\ge \tilde\tau$. Since the sectors contain the faces we have $\tilde\sigma(\omega_j)=\sigma(\omega_j)$ 
and $\tilde \tau(\omega_k) =\tau(\omega_k)$, i.e. these are lifts for $\sigma$ and $\tau$. If $V_i$ is not a special point, then $W_{V_i}^v = (W_{V_i}^1)^v\times (W_{V_i}^2)^v$ is of type $\mathtt B_j\times\mathtt B_{n-j}$ or $\mathtt D_j\times\mathtt C_j$, where, in both cases, the second factor acts trivially. We have a bijection between the possible entries of $C_i$ and the orbit $(W_{V_i}^1)^v\cdot (\epsilon_1+\epsilon_2+\cdots + \epsilon_j)$, in the following way: Let $k_1,...,k_s, \bar \ell_1,...,\bar \ell_{j-s}$, $1\le k_p,\ell_q\le n$, 
be the entries of $C_{i-1}$, we order the set of integers $\{k_1,...,k_s, \ell_1,...,\ell_{j-s}\}$ in ascending order and we identify 
this linearly ordered set with $\{1,2,..., j\}$. With respect to this bijection, the columns $C_{i-1}, C_i$ correspond to 
$\sigma(\epsilon_1+\epsilon_2+\cdots + \epsilon_j)$ and $\tau(\epsilon_1+\epsilon_2+\cdots + \epsilon_j)$ for some
$\sigma,\tau\in (W_{V_i}1)^v/(W_{V_i}1)^v_{\epsilon_1+\cdots + \epsilon_j}$. 
Now again (see \cite{BilLakshmi}), the Bruhat order on the orbit and the row condition on pairs of colums coincide. So the same arguments, as above, show that positively folded and the row condition are equivalent.
\end{proof}


\section{Local minimality}\label{localmin}
The language of building theory allows us to translate the 
study of the intersection $Z_{\lam,\mu}:= G(\co).\lam\cap U^-(\ck).\mu$
into a problem of studying intersections of subsets of a Bott-Samelson variety $\Sigma(\gamma_\lam)$: 
$$
Z_{\lam,\mu}= G(\co).\lam\cap U^-(\ck).\mu=
\bigcup_{\substack{\delta\in \Gamma(t_{\gamma_\lam}, \Lo)\\ {\rm target}(\delta)=\mu}}
\{\text{minimal galleries}\}\cap C_\delta.
$$
Here $C_\delta$ denotes the Bia\l ynicki-Birula cell associated to the combinatorial gallery $\delta$, which,
in terms of building theory, is the same as the fiber over $\delta$ of the retraction $r_{-\infty}$.

To describe more precisely the intersection of the set of minimal galleries with such a cell,
we need to ``unfold'' $\delta$, i.e. we need to construct minimal galleries that retract onto $\delta$. 
As a first step we will, in this section, describe how to unfold two steps galleries. An important tool
will be the galleries of residue chambers.

\subsection{Positively folded galleries of chambers}
\label{susePositiveChambers}
Let $E$ and $F$ be one dimensional faces in $\cj^{\mathfrak a}$ containing a vertex $V$, let also $\sfrak$ be a 
sector with vertex $V$ containing $E$. Let $w_{\sfrak_V} = w(C_V^-,\sfrak_V) $ be the element in $W^v_V$ that sends 
$C^-_V$ to $\sfrak_V$. Among the residue chambers containing $F_V$ denote by $D$ the one closest to $C_V^-$.
Fix a reduced decomposition of $w_F = w(C_V^-,D) = s_{i_1}\cdots s_{i_r}$ in $W^v_V$ and let 
$\mathbf i = (i_1,..., i_r)$ be the type of the decomposition. 
We denote by $\al_{i_j}$ the simple 
root in $\Phi_V$ corresponding to $s_{i_j}$. For any root $\alpha\in \Phi_V$, 
$x_\alpha(\cdot)$ denotes the one-parameter 
additive subgroup of $H_V$ associated to $\alpha$, let $U_\alpha$ 
denote its image in $H_V$.

We consider now galleries of residue chambers $\mathbf c = (C_V^-,C_1,...,C_r)$ in the 
apartment $\A_V$ starting at $C_V^-$ and of type $\mathbf i$. The set of these galleries 
is in bijection with the set $\Gamma (\mathbf i) = \{1,s_{i_1}\}\times\cdots\times \{1,s_{i_r}\}$ 
via the map $(c_1,...,c_r)\mapsto (C_V^-, c_1 C_V^-,...,c_1\cdots c_r C_V^-)$. 
Let $\be_j = c_1\cdots c_j (\al_{i_j})$, then $\be_j$ is the root corresponding to the common 
wall $H_j = H_{\be_j}$ of $C_{j-1} =c_1\cdots c_{j-1} C_V^- $ and $C_j = c_1\cdots c_j C_V^-$. 
In the following, we shall identify a sequence $(c_1,...,c_r)$ and the corresponding gallery.

\begin{dfn}
A gallery $\mathbf c = (c_1,...,c_r)\in\Gamma(\mathbf i)$ is said to be positively folded with 
respect to $\sfrak_V$ if $c_j = 1$ implies $w_{\sfrak_V}^{-1} \be_j < 0$. We denote the set 
of positively folded galleries by $\Gamma_{\sfrak_V}^+(\mathbf i)$.
\end{dfn}

If $\sfrak_V = C_V^+$, a gallery $\mathbf c = (c_1,...,c_r)$ is positively folded with respect 
to $C_V^+$ if, and only if, the associated subexpression $(id, c_1,c_1c_2,...,c_1\cdots c_r)$ 
is distinguished, see Deodhar \cite{Deo}, Definition 2.3.

\begin{proposition}
A gallery $\mathbf c = (C_V^-,C_1,...,C_r)\in\Gamma(\mathbf i)$ is positively folded with respect 
to $\sfrak_V$ if, and only if, $C_j = C_{j-1}$ implies that the wall $H_j = H_{\be_j}$ separates 
$\sfrak_V$ from $C_j = C_{j-1}$.
\end{proposition}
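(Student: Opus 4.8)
The plan is to reduce the statement to a per-index sign computation in the Coxeter complex $\A_V$. First I would observe that $c_j=1$ is literally the same condition as $C_j=C_{j-1}$: writing $u_{j-1}=c_1\cdots c_{j-1}$, we have $C_{j-1}=u_{j-1}C_V^-$ and $C_j=u_{j-1}c_jC_V^-$, so the two chambers coincide exactly when $c_jC_V^-=C_V^-$, i.e. when $c_j=1$ (the only alternative being $c_j=s_{i_j}$, which moves $C_V^-$). Hence it suffices to prove, for each index $j$ with $c_j=1$, that the algebraic condition $w_{\sfrak_V}^{-1}\be_j<0$ from the definition is equivalent to the geometric condition that the wall $H_{\be_j}$ separates $\sfrak_V$ from $C_{j-1}$; the proposition then follows by quantifying over all such $j$.

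Next I would pin down on which side of $H_{\be_j}$ the chamber $C_{j-1}$ lies. When $c_j=1$ we have $\be_j=c_1\cdots c_j(\al_{i_j})=u_{j-1}(\al_{i_j})$, and $H_{\be_j}=u_{j-1}H_{\al_{i_j}}$ is one of the bounding walls of $C_{j-1}=u_{j-1}C_V^-$. Using the $W^v_V$-invariance of the pairing, for $x\in C_V^-$ one has $\langle\be_j,u_{j-1}x\rangle=\langle\al_{i_j},x\rangle$, so the sign of $\be_j$ on $C_{j-1}$ equals the sign of the simple root $\al_{i_j}$ on the antidominant chamber $C_V^-$, which is negative. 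Thus $C_{j-1}$ always lies in the open half-space on which $\be_j$ is negative.

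Finally I would compute the side of $\sfrak_V=w_{\sfrak_V}C_V^-$ in the same way: for $x\in C_V^-$ one has $\langle\be_j,w_{\sfrak_V}x\rangle=\langle w_{\sfrak_V}^{-1}\be_j,x\rangle$, so $\be_j$ is positive on $\sfrak_V$ exactly when $w_{\sfrak_V}^{-1}\be_j$ is positive on the antidominant chamber, i.e. exactly when $w_{\sfrak_V}^{-1}\be_j$ is a negative root, $w_{\sfrak_V}^{-1}\be_j<0$. Since $C_{j-1}$ sits on the negative side of $H_{\be_j}$, the wall separates $\sfrak_V$ from $C_{j-1}$ precisely when $\sfrak_V$ sits on the positive side, that is precisely when $w_{\sfrak_V}^{-1}\be_j<0$, which is the defining inequality. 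The only point requiring genuine care, and the one I would double-check against the paper's conventions, is the sign bookkeeping: that $C_V^-$ is antidominant (so that positive roots are negative on it) and that the ``$<0$'' in the definition means ``is a negative root'', since a swapped convention would interchange the two half-spaces and reverse the equivalence.
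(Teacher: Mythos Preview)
Your proof is correct and follows essentially the same approach as the paper. The paper's proof is a terse two-line chain of equivalences obtained by transporting everything by $w_{\sfrak_V}^{-1}$, whereas you compute directly, via the pairing, on which side of $H_{\be_j}$ each of $C_{j-1}$ and $\sfrak_V$ lies; the underlying sign computation is the same.
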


\noindent{\it Proof. } We have the following equivalences: 

\noindent ($H_j$ separates $\sfrak_V$ from $C_j = C_{j-1}$) $\Longleftrightarrow$ ($w_{\sfrak_V}^{-1}H_j$ 
separates $C_V^-$ from $w_{\sfrak_V}^{-1}C_j = w_{\sfrak_V}^{-1}C_{j-1}$)  $\Longleftrightarrow$ 
($w_{\sfrak_V}^{-1} \be_j $ is a negative root).
\qed

The set of all galleries of chambers starting at $C_V^-$ of type $\mathbf i$ in the building 
$\cj^{\mathfrak a}_V$ has a structure of a smooth projective algebraic variety, which we denote by $\BS(\mathbf i)$. 
(In fact, it is a Bott-Samelson variety.)
To a gallery of chambers $\mathbf c = (c_1,...,c_r) = (C_V^-,C_1,...,C_r)$ in $\Gamma(\mathbf i)$, 
one can associate an open subset $\mathcal O_{\sfrak_V} (\mathbf c)$ and a cell 
$\mathcal C_{\sfrak_V}(\mathbf c)$ in the variety $\BS(\mathbf i)$. They are defined in the 
following way: for any $j\in\{1,...,r\}$, and any $a_j\in\mathbb C$, set $ o_j =x_{c_j(\al_{i_j})}(a_j) c_j $, 
then $\mathcal O_{\sfrak_V}(\mathbf c) = \{ (C_V^-=C'_0,C'_1,...,C'_r)\mid \forall j: \ C'_j = o_1\cdots o_j C_V^-\}$; 
further, set
$$
g_j = 
\left\{
\begin{array}{ll}
c_j & \hbox{ if } w_{\sfrak_V}^{-1} \be_j > 0 \\
x_{c_j(\al_{i_j})}(a_j) c_j & \hbox{ if } w_{\sfrak_V}^{-1} \be_j < 0.
\end{array}\right.
$$ 
then $\mathcal C_{\sfrak_V}(\mathbf c) = \{ (C_V^-=C'_0,C'_1,...,C'_r)\mid \forall j: \ C'_j = g_1\cdots g_j C^- \}$.  
The minimal galleries in $\mathcal C_{\sfrak_V}(\mathbf c)$
are those such that for any $j$: $C'_{j-1}\ne C'_j$, i.e. $c_j\not=1$ if $w_{\sfrak_V}^{-1} \be_j > 0$,
and $a_j\ne 0$ if $c_j=1$ and $w_{\sfrak_V}^{-1} \be_j < 0$.
We denote the set of minimal galleries by $\mathcal C^m_{\sfrak_V}(\mathbf c)$.
\begin{lemma}
\label{reMinCell}
The set $\mathcal C^m_{\sfrak_V}(\mathbf c)$ is empty if the gallery $\mathbf c$
is not positively folded with respect to $\sfrak_V$. If $\mathbf c$ is positively folded 
with respect to $\sfrak_V$, then $\mathcal C^m_{\sfrak_V}(\mathbf c)$ is isomorphic to:
$$
\mathcal C^m_{\sfrak_V}(\mathbf c) \simeq \mathbb C^{t(\mathbf c)}\times (\mathbb C^*)^{r(\mathbf c)}
$$ where 
$$
t(\mathbf c) = \sharp\{j\mid c_j = s_{i_j} \hbox{ and } w_{\sfrak_V}^{-1} \be_j < 0\},\ 
r(\mathbf c) = \sharp\{j\mid c_j = 1 \hbox{ and } w_{\sfrak_V}^{-1} \be_j < 0\}.
$$
\end{lemma}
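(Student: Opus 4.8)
The plan is to read the lemma off directly from the coordinatisation of the cell $\mathcal C_{\sfrak_V}(\mathbf c)$ given just above its statement, by sorting the steps of the gallery into three kinds and then counting free versus constrained parameters. Recall that a step $j$ carries a free parameter $a_j$ precisely when $w_{\sfrak_V}^{-1}\be_j<0$, while for the remaining steps the choice $g_j=c_j$ is rigid. Thus the whole cell $\mathcal C_{\sfrak_V}(\mathbf c)$ is the image of the affine space $\bc^{J^-}$, where $J^-=\{j\mid w_{\sfrak_V}^{-1}\be_j<0\}$ and $|J^-|=t(\mathbf c)+r(\mathbf c)$, under the map $(a_j)_{j\in J^-}\mapsto(C_V^-=C'_0,C'_1,\ldots,C'_r)$ with $C'_j=g_1\cdots g_j C_V^-$. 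The two assertions then reduce to (i) deciding when the minimal locus is empty and (ii) counting, among the coordinates $a_j$, how many are free and how many must be nonzero.

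First I would dispose of the empty case. If $\mathbf c\notin\Gamma_{\sfrak_V}^+(\mathbf i)$ there is an index $j$ with $c_j=1$ and $w_{\sfrak_V}^{-1}\be_j>0$. For such a step $g_j=c_j=1$, so $C'_j=C'_{j-1}$ for \emph{every} gallery in $\mathcal C_{\sfrak_V}(\mathbf c)$, and, as this step carries no parameter, there is no way to repair it. By the stated minimality criterion (namely $C'_{j-1}\ne C'_j$ for all $j$) no gallery in the cell is minimal, whence $\mathcal C^m_{\sfrak_V}(\mathbf c)=\emptyset$.

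Now assume $\mathbf c$ is positively folded. Then every $j$ with $w_{\sfrak_V}^{-1}\be_j>0$ automatically has $c_j=s_{i_j}$, so these rigid steps always cross their wall (since $s_{i_j}C_V^-\ne C_V^-$) and contribute neither a parameter nor a constraint. The steps $j\in J^-$ split in two: if $c_j=s_{i_j}$ (there are $t(\mathbf c)$ of these) then $g_j=x_{c_j(\al_{i_j})}(a_j)\,s_{i_j}$ already crosses the wall $H_j=H_{\be_j}$ for every value of $a_j$, so $a_j$ ranges freely over $\bc$; if $c_j=1$ (there are $r(\mathbf c)$ of these) then $g_j=x_{\al_{i_j}}(a_j)$ produces a genuine crossing exactly when $a_j\ne 0$, so minimality forces $a_j\in\bc^*$. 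Intersecting the cell with the minimal locus therefore yields the set of tuples $(a_j)_{j\in J^-}$ with $a_j\ne 0$ on the $r(\mathbf c)$ folded steps, i.e.\ $\mathcal C^m_{\sfrak_V}(\mathbf c)\simeq\bc^{t(\mathbf c)}\times(\bc^*)^{r(\mathbf c)}$ as claimed.

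The step I expect to require the most care is showing that this coordinate description is an honest isomorphism of varieties, not merely a bijection on points, and in particular justifying the two crossing statements used above. I would argue by induction on $j$ that the coordinate $a_j$ is recovered regularly from the pair $(C'_{j-1},C'_j)$: having reconstructed $g_1\cdots g_{j-1}$, the chamber $(g_1\cdots g_{j-1})^{-1}C'_j$ is adjacent to $C_V^-$ across its wall of type $i_j$, and the rank-one ($SL_2$-type) structure of the residue building along that wall identifies the chambers on the far side of $H_{\al_{i_j}}$ with the parameter $a_j$, the value $a_j=0$ corresponding exactly to the fold. This one rank-one computation simultaneously gives the crossing dichotomy ($c_j=s_{i_j}$ crosses for all $a_j$, while $c_j=1$ crosses iff $a_j\ne 0$) and the regularity of the inverse map, thereby completing the identification $\mathcal C^m_{\sfrak_V}(\mathbf c)\simeq\bc^{t(\mathbf c)}\times(\bc^*)^{r(\mathbf c)}$.
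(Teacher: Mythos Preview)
Your proposal is correct and follows exactly the approach the paper intends: the lemma is stated without a separate proof, as it is meant to be read off directly from the coordinate description of $\mathcal C_{\sfrak_V}(\mathbf c)$ and the characterisation of minimal galleries given in the paragraph immediately preceding it. Your case-by-case expansion (and the extra care about the coordinate map being a genuine isomorphism of varieties) is more detailed than what the paper provides, but it is the same argument.
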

\begin{proposition}
The cell $\mathcal C_{\sfrak_V}(\mathbf c)$ identifies with $r_{\sfrak_V}^{-1}(\mathbf c)$, 
where $r_{\sfrak_V} : \cj^{\mathfrak a}_V \to \A_V$ is the retraction centered at $\sfrak_V$.
\end{proposition}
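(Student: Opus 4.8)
The plan is to translate both sides into the group language of the residue building $\cj^{\mathfrak a}_V$ and to use the fibre description of the retraction. Exactly as the fibres of $r_{-\infty}$ in $\cj^{\mathfrak a}$ are the $U^-(\ck)$-orbits, the fibres of the retraction of the spherical building $\cj^{\mathfrak a}_V$ onto $\A_V$ centred at the chamber $\sfrak_V$ are the orbits of $U_{\sfrak_V}$, the unipotent radical of the Borel $B_{\sfrak_V}=\Stab_{H_V}(\sfrak_V)$ (this is the spherical analogue of \cite{GL}, Prop.~1, see also \cite{BT}). Writing $w_{\sfrak_V}=w(C_V^-,\sfrak_V)$, so that $\sfrak_V=w_{\sfrak_V}C_V^-$ and $B_{\sfrak_V}=w_{\sfrak_V}B_V^-w_{\sfrak_V}^{-1}$, one has
$$
U_{\sfrak_V}=w_{\sfrak_V}\,U_V^-\,w_{\sfrak_V}^{-1}=\langle\, U_\g\mid \g\in\Phi_V,\ w_{\sfrak_V}^{-1}\g<0\,\rangle ,
$$
where $U_V^-$ is the unipotent radical of $B_V^-$. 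Hence the folding condition $w_{\sfrak_V}^{-1}\be_j<0$ occurring in the definition of $\mathcal C_{\sfrak_V}(\mathbf c)$ is precisely the condition $U_{\be_j}\subset U_{\sfrak_V}$; this is the bridge between the two descriptions.

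To prove $\mathcal C_{\sfrak_V}(\mathbf c)\subseteq r_{\sfrak_V}^{-1}(\mathbf c)$ I would rewrite $g_1\cdots g_j$ by pulling all the reflections to the right. Since $g_k=x_{c_k(\al_{i_k})}(a_k)\,c_k$ exactly when $w_{\sfrak_V}^{-1}\be_k<0$, conjugating the inserted root element past $c_1\cdots c_{k-1}$ turns $x_{c_k(\al_{i_k})}(a_k)$ into $x_{\be_k}(\pm a_k)$ because $\be_k=c_1\cdots c_k(\al_{i_k})$, so that
$$
g_1\cdots g_j=V_j\,c_1\cdots c_j,\qquad V_j=\prod_{\substack{k\le j\\ w_{\sfrak_V}^{-1}\be_k<0}} x_{\be_k}(\pm a_k)\in U_{\sfrak_V}.
$$
Therefore $C'_j=g_1\cdots g_j\,C_V^-=V_j\,C_j$ with $V_j\in U_{\sfrak_V}$ and $C_j\in\A_V$; as $r_{\sfrak_V}$ fixes $\A_V$ pointwise and its fibres are $U_{\sfrak_V}$-orbits, we get $r_{\sfrak_V}(C'_j)=C_j$ for every $j$, i.e. $r_{\sfrak_V}(\mathbf c')=\mathbf c$.

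For the reverse inclusion, and to upgrade this into an identification, I would argue by induction on $r$, analysing one panel at a time. Given $\mathbf c'\in r_{\sfrak_V}^{-1}(\mathbf c)$, its truncation lies in $r_{\sfrak_V}^{-1}(C_V^-,\dots,C_{r-1})$, hence by induction equals a member of $\mathcal C_{\sfrak_V}$ with uniquely determined $a_1,\dots,a_{r-1}$; it remains to pin down $C'_r$ inside the rank-one residue of the $i_r$-panel of $C'_{r-1}$. Over $\mathbb C$ this residue is a $\mathbb P^1$, and since $r_{\sfrak_V}$ preserves the gallery distance to $\sfrak_V$, it collapses the unique chamber nearest $\sfrak_V$ (the projection of $\sfrak_V$ to the panel) onto the near chamber of $\{C_{r-1},C_r\}$ and the remaining $\mathbb A^1$ onto the far one. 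The sign of $w_{\sfrak_V}^{-1}\be_r$ records exactly whether $C_r$ is near or far: when $w_{\sfrak_V}^{-1}\be_r>0$ the condition $r_{\sfrak_V}(C'_r)=C_r$ singles out one chamber, matching $g_r=c_r$ with no free parameter; when $w_{\sfrak_V}^{-1}\be_r<0$ it leaves an $\mathbb A^1$, matching the free $a_r$ in $g_r=x_{c_r(\al_{i_r})}(a_r)\,c_r$, with $a_r$ uniquely recovered from $C'_r$. This yields $r_{\sfrak_V}^{-1}(\mathbf c)\subseteq\mathcal C_{\sfrak_V}(\mathbf c)$ and shows that the parametrisation $(a_j)\mapsto\mathbf c'$ is a bijection onto the fibre.

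The main obstacle is precisely this rank-one (panel) analysis together with the sign bookkeeping: one must check carefully, with the fixed orientation of $\be_j$, that ``near versus far chamber'' corresponds to the sign of $w_{\sfrak_V}^{-1}\be_j$, and that the unique preimage of the near chamber is exactly the chamber produced by $g_j=c_j$, while the $a_j$-family sweeps out precisely the preimages of the far chamber. Once this local statement is in place, the global identification follows formally by the inductive concatenation, in complete parallel with the proof of Proposition~\ref{pr:Cell} and of Proposition~6 in \cite{GL}.
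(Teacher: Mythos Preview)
Your argument is correct, but it follows a different route from the paper. The paper realises $r_{\sfrak_V}$ as the Bia\l ynicki--Birula limit $r_{\sfrak_V}(C')=\lim_{s\to 0}s^{\theta}C'$ for a generic coweight $\theta=w_{\sfrak_V}(-\rho^\vee)\in\sfrak_V$, then conjugates $s^{\theta}$ through $g_1\cdots g_j$: the $j$-th root element gets rescaled by $s$ to the power $\langle w_{\sfrak_V}^{-1}\be_j,-\rho^\vee\rangle$, which is positive precisely when $w_{\sfrak_V}^{-1}\be_j<0$, so in the limit those parameters disappear and one lands on $c_1\cdots c_jC_V^-=C_j$. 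The reverse inclusion is then dismissed with ``one sees in the same way''. Your approach instead uses the building-theoretic description of the fibres as $U_{\sfrak_V}$-orbits: the rewriting $g_1\cdots g_j=\big(\prod x_{\be_k}(\pm a_k)\big)\,c_1\cdots c_j$ with each surviving $\be_k$ a root of $U_{\sfrak_V}$ is exactly the paper's computation (\ref{eqExplicitMiniGal}) and gives the forward inclusion cleanly; your panel-by-panel induction for the converse is more explicit than what the paper writes and has the advantage of directly exhibiting the bijectivity of the parametrisation $(a_j)\mapsto\mathbf c'$. The trade-off is that the torus-limit argument is shorter and symmetric in the two inclusions, while your argument needs the near/far bookkeeping you flag at the end; that bookkeeping is routine (it reduces to the $SL_2$ fact that $r_{\sfrak_V}$ collapses $\mathbb P^1\setminus\{\text{near chamber}\}$ onto the far chamber), so there is no real obstacle.
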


\noindent{\it Proof. } For any chamber $C'$, the retraction can be defined as 
$r_{\sfrak_V}(C') = \lim_{s\to 0} s^{\theta} C'$, where $\theta$ is a regular coweight 
contained in $\sfrak_V$. To simplify, we take $\theta = w_{\sfrak_V}(-\rho^\vee)$. 
Further, the retraction applies componentwise to the galleries, whence 
$r_{\sfrak_V}(\mathbf g) = (C_V^-, r_{\sfrak_V}(C'_1),..., r_{\sfrak_V}(C'_r))$. 
For any $j$, $r_{\sfrak_V}(C'_j) = \lim_{s\to 0} s^{w_{\sfrak_V}(-\rho^\vee)} g_1\cdots g_j C_V^- 
= \lim_{s\to 0} g'_1\cdots g'_j C_V^-$, where
$$
g'_j = 
\left\{
\begin{array}{ll}
c_j & \hbox{ if } w_{\sfrak_V}^{-1} \be_j > 0 \\
x_{c_j(\al_{i_j})}(s^{\langle c_j(\al_{i_j}), \ c_{j-1}\cdots c_1 w_{\sfrak_V}(-\rho^\vee)\rangle}a_j) c_j & 
\hbox{ if } w_{\sfrak_V}^{-1} \be_j < 0.
\end{array}\right.
$$ But $\langle c_j(\al_{i_j}), c_{j-1}\cdots c_1 w_{\sfrak_V}(-\rho^\vee)\rangle 
= \langle w_{\sfrak_V}^{-1}\be_j, -\rho^\vee\rangle$. Therefore, 
$\mathcal C_{\sfrak_V}(\mathbf c)\subset r_{C_{\sfrak_V}}^{-1}(\mathbf c)$.
One sees in the same way that $\mathcal C_{\sfrak_V}(\mathbf c)\supset r_{C_{\sfrak_V}}^{-1}(\mathbf c)$.
\qed
\bigskip
\begin{rem}\rm
The cells define a Bia\l ynicki-Birula decomposition of the variety of all galleries of chambers $\BS(\mathbf i)$.
In fact, $\BS(\mathbf i) = \coprod_{\mathbf c\in\Gamma(\mathbf i)} \mathcal C_{\sfrak_V}(\mathbf c)$. 
\end{rem}

\subsection{Two steps minimal one-skeleton galleries}

\bigskip
\begin{theorem}
\label{thmLocMin}
Let $(E\supset V\subset F)$ be a two steps one-skeleton gallery in $\A$. There exists a minimal gallery 
$(E\supset V\subset E')$ in $\cj^{\mathfrak a}$ such that $E'$ has the same type as $F$ and $r_{-\infty}(E') = F$ if, and only if, 
$(E\supset V\subset F)$ is positively folded.
\end{theorem}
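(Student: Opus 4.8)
The plan is to reduce the whole statement to a question in the residue building $\cj^{\mathfrak a}_V$ at the central vertex $V$ and then to run it through the machinery of positively folded galleries of residue chambers from Section~\ref{susePositiveChambers}. Since $E$, $F$ and any competitor $E'$ all contain $V$, they determine one-dimensional faces $E_V$, $F_V$, $E'_V$ of the spherical building $\cj^{\mathfrak a}_V$, with apartment $\A_V$, Weyl group $W^v_V$ and chambers $C_V^\pm$. The first thing I would check is that the restriction of $\rinf$ to the faces through $V$ is exactly the retraction $r_{C_V^-}$ of $\cj^{\mathfrak a}_V$ onto $\A_V$ centered at $C_V^-$: computing $\lim_{t\to 0}\eta(t).F$ for $\eta$ a generic anti-dominant coweight gives a retraction whose fibre over a chamber is a $U_V^-$-orbit, which matches $\rinf^{-1}(\,\cdot\,)=U^-(\ck).(\,\cdot\,)$. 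Using the equivalent form of minimality of a two steps gallery (there is a sector $\sfrak'$ with vertex $V$ with $E\subset\sfrak'$ and $E'\subset-\sfrak'$), the statement becomes: there is a face $E'_V$ of the type of $F_V$ forming a minimal pair with $E_V$ and with $r_{C_V^-}(E'_V)=F_V$ if and only if $(E\supset V\subset F)$ is positively folded in the sense of Definition~\ref{posfoldefn2}.

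For the implication from positively folded to the existence of $E'$, I would fix a sector $\sfrak$ with vertex $V$ containing $E$ and let $D$ be the chamber closest to $C_V^-$ containing $F_V$, with $\mathbf i$ the type of a reduced word for $w(C_V^-,D)$. The defining data of Definition~\ref{posfoldefn2}, a minimal pair $(E_V,F_{0,V})$ followed by positive folds across walls $\ch_{\beta_j,n_j}$ reaching $F_V$, translate into a gallery of chambers $\mathbf c\in\Gamma(\mathbf i)$ that is positively folded with respect to $\sfrak_V$: each fold becomes a stutter $c_j=1$ with $w_{\sfrak_V}^{-1}\beta_j<0$, using the wall-crossing description of $\Phi^{\mathfrak a}_-(V,F)$ and the proposition characterising positive folding by separation of $\sfrak_V$. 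Then Lemma~\ref{reMinCell} gives $\mathcal{C}^m_{\sfrak_V}(\mathbf c)\neq\emptyset$; I pick a minimal (non-stuttering) gallery $\mathbf g$ in it, and take $E'_V$ to be the face of the type of $F_V$ of its terminal chamber. Non-stuttering of $\mathbf g$ forces $(E_V,E'_V)$ to be a minimal pair, and the identification $\mathcal{C}_{\sfrak_V}(\mathbf c)=r_{\sfrak_V}^{-1}(\mathbf c)$, combined with the bookkeeping identifying the value of $r_{C_V^-}$ on this terminal face with $F_V$, yields $r_{C_V^-}(E'_V)=F_V$; lifting $E'_V$ back to a face $E'\supset V$ in $\cj^{\mathfrak a}$ finishes this direction.

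For the converse, suppose $E'$ is minimal with $E$ and $\rinf(E')=F$. Minimality provides an apartment $A$ of $\cj^{\mathfrak a}_V$ containing $E_V$ and $E'_V$ in opposite chambers; I would take a minimal gallery of chambers in $A$ joining a chamber of $E_V$ to the chamber of $E'_V$ and retract it by $r_{C_V^-}$. The retraction straightens $A$ into the standard apartment and folds this gallery; its effect on $E'_V$ is to send it, through a sequence of reflections across the walls it folds along, from the ``unfolded'' position opposite $E_V$ (which is a minimal pair $F_{0,V}$ with $E_V$, opposition being preserved on the initial segment of $A$ fixed by $r_{C_V^-}$) to $r_{C_V^-}(E'_V)=F_V$. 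Because $r_{C_V^-}$ folds towards $C_V^-$, every one of these folds separates $C_V^-$ and the current position from its image, i.e.\ is a positive fold in the sense of Definition~\ref{posfoldefn}; hence $(E\supset V\subset F)$ is positively folded.

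The technical heart, and the step I expect to be the main obstacle, is the precise dictionary between the two incarnations of ``positively folded'': the affine-wall folds $F_{j-1}\rightsquigarrow F_j=s_{\beta_j,n_j}(F_{j-1})$ of the two steps gallery and the stutters of a chamber gallery relative to $\sfrak_V$, together with the accompanying bookkeeping that simultaneously guarantees that the terminal face of the minimal chamber gallery forms a minimal pair with $E$ and retracts under $\rinf$ to $F$. Here one must be careful to work with minimal pairs rather than mere opposition of faces, as the Remarks after Definition~\ref{posfoldefn2} show these differ already in type ${\tt B}_2$, so the extraction of $E'$ and the reading-off of the folds both have to be phrased inside the apartment realising the minimal pair.
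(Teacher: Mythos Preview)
Your overall framework matches the paper's: reduce to the residue building $\cj^{\mathfrak a}_V$, identify $\rinf$ with $r_{C_V^-}$ there, and run everything through the machinery of positively folded galleries of residue chambers relative to $\sfrak_V$. Where your proposal falls short is precisely at the step you flag as ``the technical heart'', and this gap is real in both directions.

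For the direction ``positively folded $\Rightarrow$ existence of $E'$'', your claim that ``non-stuttering of $\mathbf g$ forces $(E_V,E'_V)$ to be a minimal pair'' is not justified and is in fact the crux. A minimal (non-stuttering) gallery of type $\mathbf i$ starting at $C_V^-$ ends at some chamber at distance $r$ from $C_V^-$; there is no a priori reason its terminal face is opposite $E_V$ in a \emph{minimal pair}. (Take $\mathbf g$ to be the minimal gallery in $\A_V$ itself: then $E'_V=F_V$, which is exactly the case where $(E\supset V\subset F)$ is positively folded but not minimal.) What is needed, and what the paper supplies in Lemma~\ref{leReciproc2}, is an inductive argument showing that at each step $j$ the chamber $\sfrak_V$ remains in the moving apartment $g_1\cdots g_j\A_V$; the three-case analysis there tracks the relative position of $\sfrak_V$, $C_{j-1}$, $C_j$ with respect to the wall $H_j$ and shows that the root-subgroup element $g_j$ fixes the half-apartment containing $\sfrak_V$. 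Only at the end of this induction do you know $E'_V$ is opposite $\sfrak_V\supset E_V$ in $g\A_V$. Your translation of the defining folds of Definition~\ref{posfoldefn2} directly into a $\sfrak_V$-positively folded gallery $\mathbf c$ is also too quick; the paper obtains $\mathbf c$ via a separate subword/rectification argument (Lemma~\ref{leReciproc1}).

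For the converse direction, your choice of gallery and retraction differs from the paper and, as written, does not obviously work. The paper takes a minimal gallery from $C_V^-$ to $E'_V$ and retracts it by $r_{\sfrak_V}$ (not $r_{C_V^-}$): the $r_{C_V^-}$-retraction of such a gallery has \emph{no} folds, so one cannot read off positive folds from it. The $r_{\sfrak_V}$-retract $\mathbf c$ is positively folded with respect to $\sfrak_V$ and ends at $F'_V=r_{\sfrak_V}(E'_V)\subset-\sfrak_V$, giving the minimal pair $(E_V,F'_V)$ (Lemma~\ref{lemminimalpositiv}). Then Lemma~\ref{lemminimalpositiv2} unfolds $\mathbf c$ step by step and checks, again by a careful induction on the folds, that each unfolding reflection moves the face \emph{away} from $C_V^-$; this is what certifies the positive folds in the sense of Definition~\ref{posfoldefn}. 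Your sketch retracts a gallery from a chamber of $E_V$ to $E'_V$ by $r_{C_V^-}$ and asserts that the resulting folds are automatically positive; but this gallery does not start at the centre of the retraction, and the statement ``$r_{C_V^-}$ folds towards $C_V^-$'' does not by itself control the relative position of the successive images of $E'_V$ with respect to $C_V^-$. You would need an argument of the same inductive flavour as Lemma~\ref{lemminimalpositiv2} to make this precise.
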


We divide the proof of Theorem \ref{thmLocMin} into four lemmas. 
Choose a chamber $D$ containing $F_V$ and
let $w$ be the element that sends $C_V^-$ to $D$.
 
\begin{lemma}
\label{lemminimalpositiv}
Suppose there exists a minimal one-skeleton gallery $(E\supset V\subset E')$ such that $r_{-\infty}(E') = F$.
Let $\sfrak$ be a sector in $\A$ with vertex $V$ containing $E$ such that $E'\subset -\sfrak$, in any apartment 
containing $\sfrak$ and $E'$. Then
one can find a minimal gallery of residue chambers $\mathbf m'$ of type $\mathbf i = (i_1,...,i_r)$
between $C^-_V$ and $E'_V$ such that 
\begin{itemize}
\item[{\it i)}] $w = s_{i_1}\cdots s_{i_r}$ is a reduced decomposition,
\item[{\it ii)}] $\mathbf c = r_{\sfrak_V} (\mathbf m')\subset\A_V$ is a positively folded gallery of residue chambers with 
respect to $\sfrak_V$, 
\item[{\it iii)}] $(E_V, F_V')$ is a minimal pair, where $F_V' = r_{\sfrak_V}(E'_V)$ and $F'_V$ is of the same type as $F_V$.
\end{itemize}
\end{lemma}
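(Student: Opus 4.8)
The plan is to prove the three assertions inside the residue building $\cj^{\mathfrak a}_V$, in which $E_V,E'_V,F_V$ are faces and $C_V^-,\sfrak_V$ are chambers. The minimality of $(E\supset V\subset E')$ means precisely that $(E_V,E'_V)$ is a minimal pair: there is a common apartment $A$ of $\sfrak$ and $E'$ whose residue $A_V$ contains the opposite chambers $\sfrak_V$ and $-\sfrak_V$, with $E_V\subset\sfrak_V$ and $E'_V\subset -\sfrak_V$. This already settles iii). Indeed $r_{\sfrak_V}$ restricts on $A_V$ to the type-preserving isomorphism $A_V\to\A_V$ fixing $\sfrak_V$, so $F'_V:=r_{\sfrak_V}(E'_V)\subset -\sfrak_V$ has the same type as $E'_V$; since $r_{-\infty}(E')=F$ and both retractions preserve types, this is the type of $F_V$, and the opposite chambers $\sfrak_V\supset E_V$, $-\sfrak_V\supset F'_V$ exhibit $(E_V,F'_V)$ as a minimal pair.

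For the gallery I would take $\mathbf m'=(C_V^-,M_1,\dots,M_r)$ to be a minimal gallery of chambers from $C_V^-$ to $M_r=\mathrm{proj}_{E'_V}(C_V^-)$, so that $\mathbf m'$ ends in $E'_V$ and has a type $\mathbf i=(i_1,\dots,i_r)$. To get i) I would use that the restriction of $r_{-\infty}$ to $\cj^{\mathfrak a}_V$ is the retraction $r_{C_V^-}$ centred at $C_V^-$; this is read off from the description $r_{\sfrak_V}(\cdot)=\lim_{s\to 0}s^{\theta}(\cdot)$ with $\theta=w_{\sfrak_V}(-\rho^\vee)$, which for $\sfrak_V=C_V^-$ gives the antidominant direction $\theta=-\rho^\vee$ defining $r_{-\infty}$. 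Hence $F_V=r_{C_V^-}(E'_V)$. As a retraction centred at a chamber preserves gallery distance from that chamber, $r_{C_V^-}$ sends $\mathbf m'$ to a minimal gallery in $\A_V$ of the same type $\mathbf i$, ending in the chamber $D:=r_{C_V^-}(M_r)$, which contains $r_{C_V^-}(E'_V)=F_V$. Taking this $D$ in the preamble, $\mathbf i$ is a reduced word for $w=w(C_V^-,D)$ with $D\supset F_V$, which is i).

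It then remains to prove ii), for which I would argue that the $r_{\sfrak_V}$-image of a non-stammering gallery is positively folded with respect to $\sfrak_V$, and invoke the characterisation of the preceding Proposition. Suppose $\mathbf c=r_{\sfrak_V}(\mathbf m')$ stammers at step $j$, i.e. $C_{j-1}=C_j$. Since $\mathbf m'$ is minimal, $M_{j-1}\ne M_j$ are genuinely adjacent across a wall of $\cj^{\mathfrak a}_V$, and the local behaviour of the retraction sends $M_j$ either to the reflection $s_{\be_j}(C_{j-1})$, when $\sfrak_V$ lies on the $C_{j-1}$-side of $H_{\be_j}$, or back to $C_{j-1}$, when $\sfrak_V$ lies on the opposite side. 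As we are in the second case, $H_{\be_j}$ separates $\sfrak_V$ from $C_j=C_{j-1}$, equivalently $w_{\sfrak_V}^{-1}\be_j<0$; by the cited Proposition this is exactly the condition that $\mathbf c$ be positively folded with respect to $\sfrak_V$.

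The main obstacle is ii): one must control the local behaviour of $r_{\sfrak_V}$ at each fold and rule out negative folds, where the non-stammering of the lift $\mathbf m'$ is essential, since a genuine crossing in the building cannot retract to a negative fold. The auxiliary identification $r_{-\infty}|_{\cj^{\mathfrak a}_V}=r_{C_V^-}$ used for i) is the other point needing care, but it follows directly from the limit description of the retraction.
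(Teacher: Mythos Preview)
Your proof is correct and follows essentially the same route as the paper's: for i) you identify $r_{-\infty}$ with $r_{C_V^-}$ in the residue building and use that this retraction preserves gallery distance from $C_V^-$; for iii) you use that $r_{\sfrak_V}$ restricted to an apartment through $\sfrak_V$ is an isomorphism, hence carries the chamber opposite $\sfrak_V$ to $-\sfrak_V\subset\A_V$; and for ii) you argue that a fold in $r_{\sfrak_V}(\mathbf m')$ can only occur on the side of $H_{\be_j}$ away from $\sfrak_V$, which is exactly the paper's argument (phrased there via a partial retraction up to step $j-1$). The one imprecision is the dichotomy you state in ii): when $\sfrak_V$ lies on the opposite side of $H_{\be_j}$ from $C_{j-1}$, the image $r_{\sfrak_V}(M_j)$ need not equal $C_{j-1}$ (it may still be $s_{\be_j}(C_{j-1})$, namely when $M_j=\mathrm{proj}_\pi(\sfrak_V)$); however your conclusion uses only the contrapositive of the first case, which is valid and is precisely what the paper establishes.
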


\noindent{\bf Proof. } The fact that $E'$ has the same type as $F$ is a consequence 
of $r_{-\infty}(E') = F$. Transferred to the setting of the residue building, 
the retraction $r_{-\infty}$ identifies with the retraction centered at $C^-_V$ of 
$\cj_V^{\mathfrak a}$ onto $\A_V$, so $r_{C_V^-}(E'_V) = F_V$. Since this retraction preserves the distances from $C^-_V$, any minimal 
gallery $\mathbf m'=(C_V^-, C'_1,...,C'_r)$ of residue chambers in $\cj^{\mathfrak a}_V$
from $C^-_V$ to $E'_V$ (in any apartment containing those two) retracts onto a minimal gallery 
from $C^-_V$ to $F_V$, say of type $\mathbf i = (i_1,...,i_r)$. Further, one can choose $C'_r$ such that $r_{C_V^-}(C'_r) = D$. Since the gallery is minimal,
it follows that $\mathbf i = (i_1,...,i_r)$ corresponds to a reduced decomposition 
of the element $w$.

Consider the variety of galleries $\BS(\mathbf i)$. 
The gallery $\mathbf m'$ belongs to the cell $\mathcal C_{\sfrak_V}(\mathbf c)$, 
where $\mathbf c =r_{\sfrak_V} (\mathbf m') = (C^-_V,C_1,...,C_r)$. 
Let us suppose that $C_j = C_{j-1}$. Let us moreover assume (without loss of generality) 
that the gallery $\mathbf m'$ is already retracted 
until the index $j-1$, meaning that $\mathbf m' = (C^-_V,C_1,...,C_{j-1}, D_j,...,D_r)$, where $(C_{j-1},D_j,...,D_r)$ 
is a minimal gallery retracting onto $(C_{j-1},C_j,...,C_r)$. Suppose that $H_j$ does not separate $\sfrak_V$ from 
$C_j = C_{j-1}$. The chambers $C_{j-1}$ and $D_j$ have to be distinct by the assumption on the minimality, 
so $C_j=r_{\sfrak_V}(D_j)$ and ${\sfrak_V}$ can not be on the same side of $H_j$, contradicting
the assumption $C_j = C_{j-1}$ are not separated from $\sfrak_V$ by $H_j$. 
It follows that the gallery of chambers $\mathbf c$ is positively folded, i.e.,
$\mathbf c\in \Gamma_{\sfrak_V}^+(\mathbf i)$.

Let $r_{\infty,\sfrak}$ be the retraction from $\infty$, but now with respect to the sector $\sfrak$.
On the level of the residue building, the retraction $r_{\infty,\sfrak}$ identifies with the retraction 
centered at $\sfrak_V$ of  $\cj_V^{\mathfrak a}$ onto $\A_V$. So if we set $r_{\infty,\sfrak}(E') = F'$,
then $F_V'=r_{\sfrak_V}(E'_V)$ and we get a minimal pair $(E_V,F'_V)$ in $\cj^{\mathfrak a}_V$.

The face $E'_V$ is contained in the opposite of $\sfrak_V$ in any apartment containing $E_V$ and
$E'_V$, and $r_{\sfrak_V}$ preserves the distance from $\sfrak_V$. It follows that $F_V'=r_{\sfrak_V}(E'_V)$ 
is contained in $-\sfrak_V$, and hence we get a minimal pair $(E_V,F'_V)$ in $\cj^{\mathfrak a}_V$. 
Since the type of $E'_V$ and $F_V$ are the same and the type of $E'_V$ and $F'_V$
are the same, this finishes the proof of the lemma. 
\qed
\begin{lemma}
\label{lemminimalpositiv2}
If there exists a minimal one-skeleton gallery $(E\supset V\subset E')$ such that $r_{-\infty}(E') = F$,
then the one-skeleton gallery $(E\supset V\subset F)$ is positively folded.
\end{lemma}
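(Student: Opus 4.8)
\noindent{\it Proof (proposal).}
The plan is to deduce this from Lemma~\ref{lemminimalpositiv} together with the behaviour of the retraction onto $\A_V$ under a change of centre. First I would fix, as allowed by the minimality of $(E\supset V\subset E')$, a sector $\sfrak$ with vertex $V$ such that $E\subset\sfrak$ and $E'\subset-\sfrak$ in any apartment containing $\sfrak$ and $E'$. By Lemma~\ref{lemminimalpositiv}, the face $F'_V:=r_{\sfrak_V}(E'_V)$ has the same type as $F_V$ and $(E_V,F'_V)$ is a minimal pair in the residue building $\cj^{\mathfrak a}_V$; this will serve as the initial face $F_{0,V}$ in Definition~\ref{posfoldefn2}. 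On the other hand, since $\rinf$ restricts on $\cj^{\mathfrak a}_V$ to the retraction centred at $C_V^-$, the hypothesis $\rinf(E')=F$ gives $F_V=r_{C_V^-}(E'_V)$. Thus $F'_V$ and $F_V$ are the retractions of the single face $E'_V$ onto $\A_V$ from the two opposite directions $\sfrak_V$ and $C_V^-$.

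The idea is then to pass from $F'_V$ to $F_V$ by moving the centre of the retraction from $\sfrak_V$ to $C_V^-$ one wall at a time. Concretely, I would choose a minimal gallery of chambers $\sfrak_V=D_0,D_1,\dots,D_N=C_V^-$ in $\A_V$, crossing the walls $\ch_{\beta_1,n_1},\dots,\ch_{\beta_N,n_N}$ through $V$ (so $\beta_k\in\Phi_V$ and $V\in\ch_{\beta_k,n_k}$, giving exactly the affine roots admitted in Definition~\ref{posfoldefn2}), and set $F^{(k)}_V:=r_{D_k}(E'_V)$, so that $F^{(0)}_V=F'_V$ and $F^{(N)}_V=F_V$. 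The key step is the standard fact that the retractions centred at two adjacent chambers either agree on a given face or differ by the reflection across the shared wall; applied here it gives, for each $k$, that $F^{(k)}_V$ equals $F^{(k-1)}_V$ or $s_{\beta_k,n_k}(F^{(k-1)}_V)$. Deleting the trivial steps yields the sequence $F_{0,V}=F'_V,\dots,F_{s,V}=F_V$ required by the definition.

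It then remains to check that every nontrivial step is a positive fold, i.e.\ that $\ch_{\beta_k,n_k}$ separates $C_V^-$ and $F^{(k-1)}_V$ from $F^{(k)}_V$. Here I would use that, by construction of the minimal gallery $D_0,\dots,D_N$, the wall $\ch_{\beta_k,n_k}$ separates $\sfrak_V$ (on the $D_{k-1}$-side) from $C_V^-$ (on the $D_k$-side); moving the centre from $D_{k-1}$ across $\ch_{\beta_k,n_k}$ to $D_k$ reflects the retracted face onto the $\sfrak_V$-side, so that $F^{(k-1)}_V$ lies together with $C_V^-$ on the $D_k$-side while $F^{(k)}_V$ lies on the opposite side. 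This is exactly the positivity condition, and since $(E_V,F'_V)$ is a minimal pair we conclude that $(E\supset V\subset F)$ is positively folded in the sense of Definition~\ref{posfoldefn2}. (Alternatively, one could try to read the folds directly off the positively folded gallery of chambers produced in Lemma~\ref{lemminimalpositiv}, but tracking a single end-face through the tail-reflections of several folds is messier; the change-of-centre interpolation only uses the minimal-pair conclusion of that lemma.)

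The main obstacle I anticipate is precisely this orientation point: pinning down the sign in the change-of-centre fact, namely that moving the centre toward $C_V^-$ pushes the retracted face to the $\sfrak_V$-side and not the reverse, so that the elementary folds come out positive rather than negative. A rank-one computation in a tree already fixes the sign, and the general case reduces to it because only the single wall $\ch_{\beta_k,n_k}$ is relevant at each step; but this bookkeeping, rather than any new geometric input, is the crux of the argument.
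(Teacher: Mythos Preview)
Your approach is correct and genuinely different from the paper's. The paper does not interpolate the centre of the retraction; instead it takes the positively folded gallery of residue chambers $\mathbf c=r_{\sfrak_V}(\mathbf m')$ provided by Lemma~\ref{lemminimalpositiv} and \emph{unfolds} it one fold at a time, carrying the end--face $F'_V$ along through the successive tail reflections $\tau_l=s_{H_{j_1}}\cdots s_{H_{j_{l-1}}}s_{H_{j_l}}(s_{H_{j_1}}\cdots s_{H_{j_{l-1}}})^{-1}$ until it reaches $F_V$, and checking at each step that the reflected face moves away from $C_V^-$. Your route sidesteps this bookkeeping entirely: you use only conclusion~\textit{(iii)} of Lemma~\ref{lemminimalpositiv} (the minimal pair $(E_V,F'_V)$) together with the standard change--of--centre fact for adjacent retractions. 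The orientation point you flag is indeed the crux, and your rank--one heuristic gives the right sign: if $D_{k-1},D_k$ share panel $P$ and $\text{proj}_P(E'_V)\notin\{D_{k-1},D_k\}$ (exactly the case where the two retractions differ), then $r_{D_{k-1}}(E'_V)$ lies on the $D_k$--side and $r_{D_k}(E'_V)$ on the $D_{k-1}$--side, so $F^{(k-1)}_V$ sits with $C_V^-$ and $F^{(k)}_V$ on the opposite side, as Definition~\ref{posfoldefn2} requires. What the paper's route buys is that the same gallery $\mathbf c$ reappears in the converse direction (Lemmas~\ref{leReciproc1}--\ref{leReciproc2}) and in the counting formula (Proposition~\ref{prMinLocal}); what yours buys is a cleaner, more modular argument that isolates the single building--theoretic fact actually needed.
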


\noindent{\bf Proof. } 
Let $\mathbf c = r_{\sfrak_V} (\mathbf m')\subset\A_V$ be the positively folded gallery of residue chambers with 
respect to $\sfrak_V$ described in Lemma~\ref{lemminimalpositiv}.
By construction, unfolding $\mathbf c$ gives a minimal gallery from $C_V^-$ to $F_V$. We will see
that this unfolding procedure shows that $(E\supset V\subset F)$ is positively folded.

The procedure works as follows:
Let $\{j_1<\cdots < j_k\}\subset \{1,...,r\}$ be the indices where $\mathbf c$ is folded. Then 
we unfold the gallery of chambers starting with the fold at the wall $H_{j_1}$, the resulting gallery
will then still have a fold at $s_{H_{j_1}}({H_{j_2}})$, we unfold the gallery at this wall etc.
The face $F'_V$ will be reflected each time and we get
$$
\begin{array}{rcl}
F_V & = & s_{H_{j_1}}\cdots  s_{H_{j_{k-1}}}s_{H_{j_k}} (s_{H_{j_1}}\cdots  s_{H_{j_{k-1}}})^{-1} 
\cdots s_{H_{j_1}}s_{H_{j_2}}s_{H_{j_1}} s_{H_{j_1}} F'_V\\
 & = & \tau_{k}\cdots \tau_{1} F'_V\ ,
\end{array}
$$ 
where $\tau_{l} = s_{H_{j_1}}\cdots  s_{H_{j_{l-1}}}s_{H_{j_l}} (s_{H_{j_1}}\cdots  s_{H_{j_{l-1}}})^{-1}$. 
To see that $(E\supset V\subset F)$ is positively folded, it remains
to prove that each time the face is reflected away from $C^-_V$. 

First recall that $\mathbf c$ is positively folded, so for each folding step we have the chambers
$C_{j_k}=C_{j_k-1}$ and $\sfrak_V$ lie within different half-spaces with respect to the wall $H_{j_k}$.
Further, since $F_V'\subset -\sfrak_V$, the chambers $C_{j_k}=C_{j_k-1}$ and the face $F'_V$ lie
within the same half-space. We use the suggestive notation
$$
F'_V, C_{j_k} = C_{j_k-1}\quad\mid_{H_{j_k}}\quad \sfrak_V\supset E_V\ ,
$$
for this situation.

The gallery of chambers $\mathbf c$ starts at $C_V^-$ and is folded for the first time at
the hyperplane $H_{j_1}$. It follows that the chambers $C_V^-$ and $C_{j_1} = C_{j_1-1}$, and hence
also $F'_V $, are within the same half-space with respect to $H_{j_1}$:
$$
C_V^-,C_{j_k} = C_{j_k-1}, F'_V \quad\mid_{H_{j_k}}\quad \sfrak_V\supset E_V.
$$
Thus, after the first unfolding, we have: 
$$
C^-_V, C_1, ..., C_{j_1-1}\quad \mid_{H_{j_1}}\quad \tau_{1}(C_{j_1}), \tau_{1}(F'_V)\ ,
$$ 
meaning that the chambers $C^-_V, C_1, ..., C_{j_1-1}$ are separated from 
$\tau_{1}(C_{j_1})$ and from $\tau_{1}(F'_V)$ by the wall $H_{j_1}$ (note that 
the face $F'_V = \tau_{1}(F'_V)$ may be contained in the wall $H_{j_1}$). In particular,
either $F'_V$ is fixed by the reflection or is reflected away from $C^-_V$.
The gallery 
$$
\mathbf c^1 = (C^-_V,C_1,...,C_{j_1-1}, \tau_{1}(C_{j_1}),..., \tau_{1}(C_{j_2-1}), \tau_{1}(C_{j_2}),..., \tau_{1}(C_{r}))
$$ 
is now minimal up to the index $j_2-1$. Moreover, we know that  
$$
F'_V, C_{j_2-1} = C_{j_2}\quad\mid_{H_{j_2}}\quad \sfrak_V\supset E_V\ ,
$$ 
applying $\tau_{1}$, we get 
$$
\tau_{1}(F'_V), \tau_{1}C_{j_2-1} = \tau_{1}C_{j_2} \quad\mid_{\tau_{1}H_{j_2}}\quad \tau_{1}E_V\ .
$$ 
The gallery of chambers $\mathbf c^1$ is folded for the first time at the hyperplane $\tau_{1}H_{i_2}$,
so $C^-_V$ and $\tau_{1}C_{j_2-1} = \tau_{1}C_{j_2}$, and hence also $\tau_{1}(F'_V)$
are on the same side of $\tau_{1}H_{i_2}$. Therefore, when we unfold with respect 
to $\tau_{1}H_{i_2}$, this wall separates $C^-_V$ and $\tau_1 (F'_V)$ from $\tau_2\tau_1 (F'_V)$. 
This procedure can be iterated to show that at each step the image of $F'_V$ is folded away from $C^-_V$,
which proves that $(E\supset V\subset F)$ is positively folded. 
\qed
\vskip 5pt
\noindent{\bf Proof of Theorem~\ref{thmLocMin}: ``{$\mathbf{\Rightarrow}$}''}.
Lemmas \ref{lemminimalpositiv} and \ref{lemminimalpositiv2} show the 
existence of a minimal one-skeleton gallery 
$(E\supset V\subset E')$ such that $r_{-\infty}(E') = F$ implies that the 
one-skeleton gallery $(E\supset V\subset F)$ 
is positively folded.
\qed
\vskip 5pt

Let $(E\supset V\subset F)$ be a positively folded one-skeleton gallery. Let $\sfrak\subset \A$ be a sector with vertex $V$ containing $E$. Choose a chamber $D$ containing $F_V$ and let
$w_D$ be the element that sends $C^-_V$ to $D$. 
Let $\mathbf i = (i_1,...,i_r)$ be the type of a reduced 
decomposition of $w_D = s_{i_1}\cdots s_{i_r}$ in $W^v_V$. 

\begin{lemma}
\label{leReciproc1}
For $w\le w_D$ let $F'_V$ be the a face of $w(C^-_V)$ of the same type as $F_V$. Then there exists a gallery
of chambers $\mathbf c = (C_V^-,C_1,...,C_r)$ of type $\mathbf i$, positively folded with respect to $-w(C^-_V)$,
such that $F_V'\subset C_r$.
\end{lemma}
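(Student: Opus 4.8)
The plan is to work entirely inside the spherical Coxeter complex $\A_V$ of $W^v_V$ and to reduce the statement to a purely combinatorial assertion about reduced subwords. Writing $w_0$ for the longest element of $W^v_V$, opposition of chambers is right multiplication by $w_0$, so $-w(C_V^-)=ww_0(C_V^-)$ and hence $w_{\sfrak_V}=ww_0$ for $\sfrak_V=-w(C_V^-)$; consequently the defining inequality $w_{\sfrak_V}^{-1}\be_j<0$ at a fold is equivalent to $w^{-1}\be_j>0$. Since a gallery $\mathbf c=(c_1,\dots,c_r)\in\Gamma(\mathbf i)$ of type $\mathbf i$ ends in $c_1\cdots c_r(C_V^-)$, and $F_V'$ is a face of $w(C_V^-)$, it suffices to produce $c_j\in\{1,s_{i_j}\}$ with $c_1\cdots c_r=w$ (which forces $C_r=w(C_V^-)\supset F_V'$) such that $w^{-1}\be_j>0$ whenever $c_j=1$. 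In other words, I must exhibit a reduced subword of $\mathbf i$ representing $w$ whose complementary (folded) positions all satisfy this positivity condition.

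Existence of some reduced subword for $w$ is immediate: since $w\le w_D$ and $\mathbf i$ is reduced for $w_D$, the subword property of the Bruhat order furnishes one among the letters of $\mathbf i$. The issue is to choose it so that the folds are positive, and a naive (e.g.\ rightmost) choice fails. I would therefore construct the subword by an induction on $r$ that processes $\mathbf i$ \emph{from the left}, keeping the reference chamber always opposite to the current endpoint. Concretely, set $s=s_{i_1}$, so $sw_D<w_D$ and $(i_2,\dots,i_r)$ is reduced for $sw_D$. If $sw>w$, I fold at the first step ($c_1=1$): here $\be_1=\al_{i_1}$ and the required inequality $w^{-1}\be_1>0$ is exactly $sw>w$, so this fold is positive, and I recurse on $(i_2,\dots,i_r)$ with the same $w$ and the same reference $-w(C_V^-)$, which is legitimate since the lifting property gives $w\le sw_D$. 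If instead $sw<w$, I cross at the first step ($c_1=s$) and recurse on $(i_2,\dots,i_r)$ for the element $sw$, whose endpoint is $sw(C_V^-)$ with reference $-(sw)(C_V^-)$; the lifting property now yields $sw\le sw_D$, so the recursion applies. The base case $r=0$ forces $w=e$ and the empty gallery, which is trivially positively folded and ends in $C_V^-\supset F_V'$.

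The reason positivity survives the recursion is that the reference transforms covariantly with the shift. In the crossing case, passing to the shifted gallery starting at $C_V^-$ replaces each later wall root $\be_j$ by $s(\be_j)$ and simultaneously replaces $w$ by $sw$, so that $(sw)^{-1}\bigl(s(\be_j)\bigr)=w^{-1}\be_j$; thus the inductive condition is literally the desired one. In the folding case the later wall roots and the reference are unchanged, so the inherited positivity is immediate, and the new first fold was certified above. Reassembling the recursion gives a subword $\mathbf c$ with $c_1\cdots c_r=w$ whose every fold satisfies $w^{-1}\be_j>0$, i.e.\ a gallery of type $\mathbf i$ positively folded with respect to $-w(C_V^-)$ and ending in a chamber $C_r=w(C_V^-)\supset F_V'$, as required.

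The step I expect to be the main obstacle is exactly this bookkeeping of the reference chamber. Stripping the \emph{last} letter of $\mathbf i$ instead leads, in a final crossing, to a change of reference from $-(ws)(C_V^-)$ to $-w(C_V^-)$ and forces one to exclude a spurious fold at the wall separating $ws(C_V^-)$ from $w(C_V^-)$, which does not follow from positivity alone; processing the first letter avoids this because the $s$-shift acts on the wall roots and on $w$ together, so no uncompensated wall ever appears. The only external inputs are the two instances of the Bruhat lifting property ($w\le sw_D$ when $sw>w$, and $sw\le sw_D$ when $sw<w$), together with the elementary identity $w^{-1}\al_{i_1}>0\Leftrightarrow sw>w$ used to certify the initial fold.
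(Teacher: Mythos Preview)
Your argument is correct. Translating the positivity condition via $w_{\sfrak_V}=ww_0$ into $w^{-1}\be_j>0$ is accurate, and your left-to-right recursion, deciding $c_1=1$ or $c_1=s_{i_1}$ according to whether $s_{i_1}w>w$ or $s_{i_1}w<w$, is exactly the greedy construction of a distinguished subexpression in the sense of Deodhar. The two invocations of the lifting property ($w\le sw_D$ in the folding case, $sw\le sw_D$ in the crossing case) are standard, and your key bookkeeping identity $(sw)^{-1}(s\be_j)=w^{-1}\be_j$ is what makes the reference shift invisible; this is precisely why processing from the left succeeds while processing from the right would not.

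The paper's proof proceeds quite differently: it first invokes the subword property to get \emph{some} folded gallery $\mathbf d$ of type~$\mathbf i$ ending in a chamber containing $F'_V$, and then repairs it. Whenever a bad fold occurs at a wall $H$ (i.e.\ $-w(C_V^-)$ lies on the same side as the folded chamber), the paper observes that the gallery must meet $H$ again later (because the terminal face $F'_V\subset w(C_V^-)$ lies on the other side), and reflects the segment of the gallery between these two occurrences across $H$. Iterating removes all bad folds without moving the endpoint. Your approach constructs the desired gallery directly and yields a canonical one (indeed, a reduced subexpression for $w$), whereas the paper's reflection-correction argument is more geometric, works with an arbitrary initial gallery, and keeps the terminal chamber fixed throughout. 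Both are short; yours is perhaps slightly cleaner and makes the link to Deodhar's combinatorics explicit.
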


\noindent{\bf Proof. } 
Let $\mathbf m$ be a minimal gallery of type $\mathbf i = (i_1,...,i_r)$ between $C_V^-$ and $D\supset F_V$.
By the subword property, there exists a folded gallery $\mathbf d = (C_V^-,D_1,...,D_r)$ of 
type $\mathbf i$ in $\A_V$ such that $D_r\supset F'_V$.

Suppose the gallery is not positively folded with respect to $-w(C^-_V)$.
Let $j$ be the smallest index such that $-w(C^-_V)$ and $D_j = D_{j+1}$ are on the 
same side of the wall $H_{i_j}$ of type $i_j$. 

The last chamber $D_r$ contains $F'_V\subset w(C^-_V)$, and $w(C^-_V)$ lies within the other 
half-space defined by $H_{i_j}$. It follows that the gallery $\mathbf d$ has to meet
$H_{i_j}$ for some index larger than $j$. Let $j_{max}= \max_{k}\{H_{i_k}=H_{i_j}, k>j\}$ or 
set $j_{max} = r$ if $H_{i_j}\supset F'_V$.

Consider the new gallery of type $\mathbf i$, $\mathbf d' = (C_V^-,D'_1,...,D'_r)$ defined by : 
$$
D'_k = \left\{
\begin{array}{ll}
D_k & \hbox{ if } k\leqslant j\\
s_{H_{i_j}}(D_k) & \hbox{ if } j+1\leqslant k\leqslant j_{max}\\
D_k & \hbox{ if } k> j_{max}\ .\\
\end{array}
\right.
$$ 
This gallery still has the property that the last chamber contains $F'_V$: $D'_r\supset F'_V$, and the gallery
is now positively folded with respect to $-w(C^-_V)$ till the index $i_j$. 
By repeating the procedure if necessary, one obtains a gallery 
$\mathbf c = (C_V^-,C_1,...,C_r)\in\Gamma_{-w(C^-_V)}^+(\mathbf i)$ such that  $F'_V \subset C_r$. 
\qed

\begin{lemma}
\label{leReciproc2}
Let $F'_V$ be the face of $-\sfrak_V$ of the same type as $F_V$. Then there exists a face $E'_V$ of the same type as $F_V$ such that $(E_V,E'_V)$ is a minimal pair in $\cj_V^{\mathfrak a}$ and $r_{C^-_V}(E'_V) = F_V$.
\end{lemma}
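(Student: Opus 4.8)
The plan is to produce $E'_V$ as the last face of a minimal gallery of chambers in $\cj^{\mathfrak a}_V$ obtained by unfolding a positively folded gallery supplied by Lemma~\ref{leReciproc1}, and then to read off the two required properties from the two retractions $r_{C^-_V}$ and $r_{\sfrak_V}$.

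First I would record that, since $E_V\subset\sfrak_V$ and $F'_V$ is the face of $-\sfrak_V$ of the same type as $F_V$, the pair $(E_V,F'_V)$ is a minimal pair, witnessed by the opposite chambers $\sfrak_V$ and $-\sfrak_V$ of $\A_V$. Let $w\in W^v_V$ be the element with $w(C^-_V)=-\sfrak_V$. The point where the hypothesis that $(E\supset V\subset F)$ is positively folded enters is the Bruhat inequality $w\le w_D$ for a suitable chamber $D\supset F_V$: reflecting $-\sfrak_V$ through the walls of a positive folding sequence carrying the minimal position to $F_V$ produces such a $D$, and since every positive fold places its face on the far side of its wall from $C^-_V$, the gallery distance to $C^-_V$ does not decrease along the sequence, so $w=w(C^-_V,-\sfrak_V)\le w(C^-_V,D)=w_D$. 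Granting $w\le w_D$, Lemma~\ref{leReciproc1} applied with this $w$ (so that $-w(C^-_V)=\sfrak_V$) yields a gallery of chambers $\mathbf c=(C^-_V,C_1,\ldots,C_r)$ of type $\mathbf i$, positively folded with respect to $\sfrak_V$, with $F'_V\subset C_r$.

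Next I would unfold $\mathbf c$ in the building. Being positively folded with respect to $\sfrak_V$, by Lemma~\ref{reMinCell} the set $\mathcal C^m_{\sfrak_V}(\mathbf c)$ is non-empty, so I may pick a minimal gallery $\mathbf m=(C^-_V,C'_1,\ldots,C'_r)\in\mathcal C^m_{\sfrak_V}(\mathbf c)$ and let $E'_V\subset C'_r$ be the face of the same type as $F_V$. Since $\mathcal C_{\sfrak_V}(\mathbf c)$ is the fibre $r_{\sfrak_V}^{-1}(\mathbf c)$, we have $r_{\sfrak_V}(C'_r)=C_r$, whence $r_{\sfrak_V}(E'_V)=F'_V\subset-\sfrak_V$. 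On the other hand $\mathbf m$ is non-stuttering of reduced type $\mathbf i$, hence a minimal gallery; as $r_{C^-_V}$ preserves types and distances from $C^-_V$, the image $r_{C^-_V}(\mathbf m)$ is the unique minimal gallery of type $\mathbf i$ in $\A_V$ issued from $C^-_V$, which ends at $w_D(C^-_V)=D\supset F_V$. Therefore $r_{C^-_V}(C'_r)=D$ and $r_{C^-_V}(E'_V)=F_V$, which is the second assertion.

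Finally, for the minimal pair I would pass to a single apartment. Choose an apartment $A'$ of $\cj^{\mathfrak a}_V$ containing the chambers $\sfrak_V$ and $C'_r$; then $r_{\sfrak_V}|_{A'}\colon A'\to\A_V$ is an isomorphism fixing $\sfrak_V$ pointwise, so it fixes $E_V$ and sends $E'_V$ to $F'_V$. Pulling the minimal pair $(E_V,F'_V)$ of $\A_V$, witnessed by $\sfrak_V$ and $-\sfrak_V$, back through this isomorphism shows that $E_V\subset\sfrak_V$ while $E'_V$ lies in the chamber of $A'$ opposite to $\sfrak_V$; hence $(E_V,E'_V)$ is a minimal pair. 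I expect the main obstacle to be the first step, namely establishing $w\le w_D$ cleanly from the definition of positive folding, by controlling how each fold changes the gallery distance to $C^-_V$ and choosing $D$ together with a compatible sector $\sfrak$; once Lemma~\ref{leReciproc1} applies, the unfolding and the verification of both properties follow formally from the cell description $\mathcal C_{\sfrak_V}(\mathbf c)=r_{\sfrak_V}^{-1}(\mathbf c)$ and the distance-preserving property of $r_{C^-_V}$.
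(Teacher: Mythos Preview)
Your approach is essentially the paper's: use positive foldedness to get $w(C^-_V,-\sfrak_V)\le w_D$, apply Lemma~\ref{leReciproc1} to obtain a gallery $\mathbf c$ positively folded with respect to $\sfrak_V$ ending at a chamber containing $F'_V$, pick a minimal $\mathbf m\in\mathcal C^m_{\sfrak_V}(\mathbf c)$, and let $E'_V$ be the face of the right type in its last chamber. Your verification that $r_{C^-_V}(E'_V)=F_V$ via the fact that $r_{C^-_V}(\mathbf m)$ is the unique minimal gallery of type $\mathbf i$ from $C^-_V$ (hence ends at $D$) is in fact cleaner than the paper's phrasing.

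The one place your argument genuinely differs is the minimal-pair verification. The paper proves by an explicit three-case induction on $j$ that $\sfrak_V$ and $C'_j=g_1\cdots g_j C^-_V$ lie together in the apartment $g_1\cdots g_j\A_V$, and extracts from this both the opposition of $E'_V$ and $\sfrak_V$ and the explicit formula $g=x_{\beta_1}(\pm a_1)\cdots x_{\beta_r}(\pm a_r)w_{-\sfrak_V}$ (equation~\eqref{eqExplicitMiniGal}), which is reused later in the paper. Your argument---pick any apartment $A'$ containing $\sfrak_V$ and $C'_r$, then pull back the minimal pair $(E_V,F'_V)$ through the isomorphism $r_{\sfrak_V}|_{A'}$---is shorter and entirely sufficient for the lemma as stated; it just does not produce that explicit expression. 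On the point you flag as the main obstacle, note that your sketch implicitly assumes the starting face $F_{0,V}$ of the positive-folding sequence coincides with the given $F'_V\subset-\sfrak_V$; since $\sfrak$ is fixed in advance this needs a word, but the paper is equally terse here, simply asserting that $-\sfrak_V$ is closer to $C^-_V$ than $D$.
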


\noindent{\bf Proof. } Because $(E\supset V\subset F)$ is positively folded and $\sfrak \supset E$, the chamber $-\sfrak_V$ is closer to $C^-_V$ than $D$. Therefore $w = w(C^-_V,-\sfrak_V) \leq w_D$. So we can apply Lemma \ref{leReciproc1} to get a gallery of chambers $\mathbf c=(C_V^-,C_1,...,C_r)$ of type $\bf i$ such that $\mathbf c$ is positively folded with respect to $\sfrak_V$ and $F_V\subset C_r$.

According to the preceding section (see Lemma~\ref{reMinCell} and before), 
there exist a minimal gallery $\mathbf m = (C^-_V,C'_1,...,C'_r)$ in the cell $\mathcal C_{\sfrak_V}(\mathbf c)$, and
the chambers $C_j'$ can be described as $C'_j = g_1\cdots g_j C_V^-$ where 
$g_j = c_j$ or $x_{c_j(\al_{i_j})}(a_j) c_j $, and 
$c_j\not=1$ if $w_{\sfrak_V}^{-1} \be_j > 0$,
and $a_j\ne 0$ if $c_j=1$ and $w_{\sfrak_V}^{-1} \be_j < 0$.

Let $E'_V$ be the face of the same type as $F'_V$ contained in $C'_r$. First, we note that the minimality of the gallery $\mathbf m = (C^-_V,C'_1,...,C'_r)$ and the fact that $r_{C^-_V}(\mathbf m) = \mathbf c$ ensures that $r_{C^-_V}(E'_V) = F_V$. Second, we are going to prove that $\sfrak_V$ and $E'_V$ are contained in the apartment $g\A_V$, 
with $g = g_1\cdots g_r$, and, in this apartment, $E'_V$ is contained in the chamber opposite $\sfrak_V$. 

The proof is by an inductive procedure. We show that, for all $j\in\{1,...,r\}$, $\sfrak_V$ and $g_1\cdots g_j C_V^-$ 
are in the apartment $g_1\cdots g_j \A_V$. We write in the following just $H_j$ for
the common wall $ H_{\beta_j}$ of $C_{j-1}$ and $C_j$ of type $i_j$.

By assumption, $\bf c$ is a
positively folded gallery with respect to $\sfrak_V$, so there are three possible relative position for 
$\sfrak_V$, $C_V^-$ and $C_1$ with respect to $H_1$:

1) $\sfrak_V$ and $C_V^-$ are on the same side of $H_1$ and $C_1$ not, then 
$C'_1 = g_1C_V^- = x_{-\al_{i_1}}(a_1)s_{i_1} C_V^- = x_{-\al_{i_1}}(a_1) C_1$. 
But $x_{-\al_{i_1}}(a_1)$ pointwise stabilizes the halfspace bounded by $H_1$ 
containing $C_V^-$, hence $x_{-\al_{i_1}}(a_1)(\sfrak_V) = \sfrak_V$ and $C'_1$ 
are in the apartment $g_1\A_V$;

2) $\sfrak_V$ and $C_V^- = C_1$ are separated by $H_1$, then 
$C'_1 = g_1C_V^- = x_{\al_{i_1}}(a_1) C_V^-$ but $x_{\al_{i_1}}(a_1)$ pointwise 
stabilizes the halfspace bounded by $H_1$ not containing $C_V^-$, hence 
$\sfrak_V$ and $C'_1$ are in the apartment $g_1\A_V$;

3) $\sfrak_V$ and $C_1$ are on the same side of $H_1$ and $C_V^-$ not, 
then $w_{\sfrak_V}$ has a reduced decomposition that starts with $s_{i_1}$, 
$w_{\sfrak_V} = s_{i_1}u$, so $w_{\sfrak_V}^{-1}(-\al_{i_1}) > 0$, whence 
$g_1 = c_1 = s_{i_1}$ and $\sfrak_V$ and $C'_1 = s_{i_1}C_V^-$ are in the apartment $g_1\A_V$.

\noindent 
By induction we assume now that the chambers $\sfrak_V$ and $g_1\cdots g_{j-1} C_V^-$ are in the 
apartment $A_{j-1} = g_1\cdots g_{j-1} \A_V$. Again, we have three possible relative positions
for $\sfrak_V, C_{j-1}$ and $C_{j}$:

1) $\sfrak_V$ and $C_{j-1}$ are on the same side of $H_j$ and $C_j$ not, then 
$\sfrak_V$ and $C'_{j-1}$ are on the same side of $g_1\cdots g_{j-1} H_j$ in $A_{j-1}$, and 
$$
\begin{array}{rcl}
C'_j & =  &g_1\cdots g_{j-1} x_{-\al_{i_j}}(a_j)s_{i_j} C_V^- \\
 & = & g_1\cdots g_{j-1} x_{-\al_{i_j}}(a_j)s_{i_j} (g_1\cdots g_{j-1})^{-1} C'_{j-1}\\
 & = &  g_1\cdots g_{j-1} x_{-\al_{i_j}}(a_j) (g_1\cdots g_{j-1})^{-1} g_1\cdots g_{j-1} s_{i_j} (g_1\cdots g_{j-1})^{-1} C'_{j-1},
\end{array}
$$  where $g_1\cdots g_{j-1} s_{i_j} (g_1\cdots g_{j-1})^{-1} C'_{j-1}$ is the chamber adjacent to 
$C'_j$ along $g_1\cdots g_{j-1} H_j$ in $A_{j-1}$. Moreover, $g_1\cdots g_{j-1} x_{-\al_{i_j}}(a_j)  
(g_1\cdots g_{j-1})^{-1}$ pointwise stabilizes the halfspace bounded by $ g_1\cdots g_{j-1} H_j$ 
containing $C'_{j-1}$ and $\sfrak_V$. So $\sfrak_V$ and $C'_j$ are in the apartment $g_1\cdots g_j \A_V$.

2) $C_{j-1} = C_j$ and $\sfrak_V$ are separated by $H_j$, then
$C'_{j-1}$ and $\sfrak_V$ are separated by $g_1\cdots g_{j-1} H_j$ in $A_{j-1}$, and $\sfrak_V$ and the 
chamber 
$$
g_1\cdots g_{j-1} s_{i_j} (g_1\cdots g_{j-1})^{-1} C'_{j-1}
$$ 
are on the same side of this wall. Moreover, for $a_j\ne 0$
$$
C'_j =  g_1\cdots g_{j-1} x_{\al_{i_j}}(a_j) C_V^- = g_1\cdots g_{j-1} 
x_{\al_{i_j}}(a_j)(g_1\cdots g_{j-1})^{-1} C'_{j-1} 
$$ 
is a chamber adjacent to $C'_{j-1}$ along $g_1\cdots g_{j-1} H_j = g_1\cdots g_{j-1} x_{\al_{i_j}}(a_j) H_j$ 
in $g_1\cdots g_j \A_V$. 
The root-subgroup  $g_1\cdots g_{j-1} x_{\al_{i_j}}(a_j)(g_1\cdots g_{j-1})^{-1}$ pointwise 
stabilizes the halfspace bounded by $ g_1\cdots g_{j-1} H_j$ and containing the chamber
$ g_1\cdots g_{j-1} s_{i_j} (g_1\cdots g_{j-1})^{-1} C'_{j-1}$. So $\sfrak_V$ and $C'_j$ 
are in the apartment $g_1\cdots g_j \A_V$.
 
3) $\sfrak_V$ and $C_j$ are on the same side of $H_j$ and $C_{j-1}$ not, then 
$w_{\sfrak_V}^{-1} \be_j > 0 $ and so $C'_j =g_1\cdots g_{j-1} s_{i_j} C_V^-$. Whence  
$\sfrak_V$ and $C'_j$ are in the apartment $g_1\cdots g_j \A_V$.

Therefore $\sfrak_V$ and $E'_V$ are contained in the apartment 
$g\A_V = g_1\cdots g_r\A_V$, and in this apartment $E'_V$ is the 
image of the face $F'_V$ contained in $-\sfrak_V$. More precisely, 
$E'_V  = g \phi_V^- = b w_{-\sfrak_V} \phi_V^- = b F'_V$, where $\phi_V^-$ is 
the face having the type of $F'_V$ contained in $C_V^- $, $F'_V = w_{-\sfrak_V} \phi_V^-$ 
and $b\in B_{\sfrak_V}  = Stab_{H_V}(\sfrak_V)$. This element is obtained as follows: 
\begin{equation}
\label{eqExplicitMiniGal}
\begin{array}{rcl}
g & = & g_1\cdots g_r \\
 & = & x_{c_1(\al_{i_1})}(a_1) c_1\cdots x_{c_r(\al_{i_r})}(a_r) c_r  \\
 & = & x_{\beta_1}(\pm a_1) \cdots x_{\beta_r}(\pm a_r) c_1\cdots c_r\\
 & = & x_{\beta_1}(\pm a_1) \cdots x_{\beta_r}(\pm a_r) w_{-\sfrak_V} \ .
\end{array}
\end{equation}
As $E_V$ and $F'_V$ are in opposite chambers in $\A_V$, so are $E_V$ and $E'_V$ in $g\A_V$.
Let $E'\subset \cj^{\mathfrak a}$ be the one dimensional face such that $V\subset E'$ and $E'_V$
is the associated face in the residue building $\cj_V^{\mathfrak a}$. 
Let $r_{\infty,\sfrak}$ be the retraction from $\infty$, but now with respect to the sector $\sfrak$.
On the level of the residue building, the retraction $r_{\infty,\sfrak}$ identifies with the retraction 
$r_{\sfrak_V}$ centered at $\sfrak_V$ of  $\cj_V^{\mathfrak a}$ onto $\A_V$. Since $E_V'$ retracts with respect 
to $r_{\sfrak_V}$ onto $F'_V$ in $\A_V$, $E'$ retracts with respect 
to $r_{\infty, \sfrak}$ onto $F'$ in $\A$. The retraction is distance preserving with respect
to $\sfrak$,  so the fact that $E$ and $F'$ are in opposite sectors implies that the same holds
for $E$ and $E'$.
In other words, $(E_V, E'_V)$ is a minimal pair. 
\qed

\medskip
\noindent{\bf Proof of Theorem~\ref{thmLocMin}: ``{$\mathbf{\Leftarrow}$}''}.
Since $(E\supset V\subset F)$ is positively folded, there exists  a sector $\sfrak\supset E$ with vertex $V$ and a face $F'\supset V$ of the same type as $F$ such that $F'\subset -\sfrak$. Therefore, we can apply Lemmata \ref{leReciproc1} and \ref{leReciproc2} to get a minimal pair $(E_V,E'_V)$, 
with $r_{C^-_V}(E'_V) = F_V$, in other words a minimal gallery $(E\supset V\subset E')$, with $r_{-\infty}(E') = F$. The fact that $E'$ has the same type as $F$ is a consequence of $r_{-\infty}(E') = F$.
\qed

\bigskip
\begin{dfn}
\label{dfn:Min}
Given a two-step gallery $(E\supset V\subset F)$ in $\cj^{\mathfrak a}$, 
denote by $Min(E,F)$ the set of all faces $E'\supset V$ such that $r_{-\infty}(E') = F$ and $(E\supset V\subset E')$ is 
minimal. This set can be identified with the set of all faces $E'_V$ such that $r_{C_V^-} (E'_V) = F_V$ 
and $(E'_V,E_V)$ is a minimal pair. 
\end{dfn}
We assume now that $(E\supset V\subset F)$ is positively folded, 
we want to give this set an algebraic structure as an open subset of a union of cells in a Bott-Samelson variety.

We use the same notation as in section~\ref{susePositiveChambers}.
Let $\sfrak$ be a sector containing $E$ and let $w_{\sfrak_V} = 
w(C_V^-,\sfrak_V) $ be the element in $W^v_V$ that sends 
$C^-_V$ to $\sfrak_V$. Let $D$ be the chamber containing $F_V$ the 
closest to $C_V^-$. Since $(E\supset V\subset F)$ is positively folded, 
$w_{-\sfrak_V} = w(C_V^-, -\sfrak_V) \leq w_D = w(C_V^-, D)$. Fix a 
reduced decomposition of $w_D = s_{i_1}\cdots s_{i_r}$ in $W^v_V$ and denote its type by 
$\mathbf i = (i_1,..., i_r)$. 

We denote by $\Gamma_{\sfrak_V}^+(\mathbf i, op)$ the set of all galleries 
$\mathbf c = (C_V^-,C_1,...,C_r)$ of residue chambers of type $\mathbf i$ 
which are positively folded with respect to $\sfrak_V$ and have the property that the 
face $F'_V$ of the same type as $F_V$ contained in $C_r$ forms a minimal pair with 
$E_V$ in $\A_V$. 

\begin{proposition}
\label{prMinLocal}
The set $Min(E,F)$ is in bijection with the disjoint union 
$\coprod_{\mathbf c\in\Gamma_{\sfrak_V}^+(\mathbf i,op)} \mathcal C^m_{\sfrak_V}(\mathbf c)$, 
where $ \mathcal C^m_{\sfrak_V}(\mathbf c)$ is the set of all minimal galleries in the cell $ \mathcal C_{\sfrak_V}(\mathbf c)
\subset \BS(\mathbf i)$. 
\end{proposition}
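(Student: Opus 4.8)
The plan is to exhibit two mutually inverse maps, reducing essentially all geometric content to Lemmas~\ref{lemminimalpositiv} and~\ref{leReciproc2} together with the cell description of Section~\ref{susePositiveChambers}. First I would unwind the right-hand side. Since the cells satisfy $\mathcal C_{\sfrak_V}(\mathbf c)=r_{\sfrak_V}^{-1}(\mathbf c)$ and form a Bia\l ynicki-Birula decomposition of $\BS(\mathbf i)$, a minimal gallery of residue chambers $\mathbf m=(C_V^-,C_1',\ldots,C_r')$ of type $\mathbf i$ lies in exactly one cell, namely the one indexed by $\mathbf c=r_{\sfrak_V}(\mathbf m)$. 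Hence $\coprod_{\mathbf c\in\Gamma_{\sfrak_V}^+(\mathbf i,op)}\mathcal C^m_{\sfrak_V}(\mathbf c)$ is identified with the set of all minimal galleries $\mathbf m$ of type $\mathbf i$ starting at $C_V^-$ whose retraction $r_{\sfrak_V}(\mathbf m)$ lies in $\Gamma_{\sfrak_V}^+(\mathbf i,op)$; nonemptiness of the relevant cells is guaranteed by Lemma~\ref{reMinCell}.

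The forward map $\Phi$ sends such an $\mathbf m$ to the face $E_V'$ of the same type as $F_V$ in its terminal chamber $C_r'$, and thence to the one-dimensional face $E'\supset V$ of $\cj^{\mathfrak a}$ with residue $E_V'$. To see $E'\in Min(E,F)$ in the sense of Definition~\ref{dfn:Min}, I would verify its two defining conditions. The equality $r_{C_V^-}(E_V')=F_V$ follows because $r_{C_V^-}$ preserves gallery distance from $C_V^-$: the image $r_{C_V^-}(\mathbf m)$ is then a minimal gallery of type $\mathbf i$ issuing from $C_V^-$, hence the unique such gallery in $\A_V$, ending in $D=w_D(C_V^-)$; thus $r_{C_V^-}(C_r')=D$ and $r_{C_V^-}(E_V')=F_V$. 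That $(E_V,E_V')$ is a minimal pair is exactly the conclusion of the inductive ``common apartment'' computation in the proof of Lemma~\ref{leReciproc2}: that computation places $\sfrak_V$ and $C_r'$ in a single apartment $g\A_V$ in which $E_V'$ is the image of the face of $-\sfrak_V$, and then the defining minimal-pair condition of $\Gamma_{\sfrak_V}^+(\mathbf i,op)$ for the retracted face $F_V'=r_{\sfrak_V}(E_V')$, transported by the distance-preserving retraction $r_{\sfrak_V}$, yields the minimal pair $(E_V,E_V')$. I would invoke this argument verbatim.

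The backward map $\Psi$ is supplied by Lemma~\ref{lemminimalpositiv}: for $E'\in Min(E,F)$, since $(E_V',E_V)$ is a minimal pair and $r_{-\infty}(E')=F$, that lemma produces a minimal gallery $\mathbf m'$ of type $\mathbf i$ from $C_V^-$ with $E_V'$ in its last chamber and with $\mathbf c=r_{\sfrak_V}(\mathbf m')\in\Gamma_{\sfrak_V}^+(\mathbf i,op)$; then $\mathbf m'\in r_{\sfrak_V}^{-1}(\mathbf c)=\mathcal C_{\sfrak_V}(\mathbf c)$ is minimal, so $\mathbf m'\in\mathcal C^m_{\sfrak_V}(\mathbf c)$. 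What makes $\Phi$ and $\Psi$ mutually inverse is a single uniqueness statement: the terminal chamber of any such minimal gallery of type $\mathbf i$ is the unique chamber closest to $C_V^-$ containing $E_V'$. Indeed it lies at distance $r=\ell(w_D)$ from $C_V^-$, while any chamber containing $E_V'$ retracts under $r_{C_V^-}$ to a chamber containing $F_V$, which is at distance $\ge\ell(w_D)$ with equality only for $D$; as $r_{C_V^-}$ preserves distances, no chamber containing $E_V'$ is closer than $r$. Since a minimal gallery of a fixed type between two fixed chambers is unique, $\mathbf m$ is recovered from $E_V'$. Consequently $\Phi(\Psi(E'))$ reads off the type-$F_V$ face of the last chamber of $\mathbf m'$, namely $E_V'$, and $\Psi(\Phi(\mathbf m))$ returns the unique minimal gallery of type $\mathbf i$ ending at the closest chamber containing $E_V'$, which is $\mathbf m$ itself.

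The only genuinely substantive step is the minimal-pair assertion in the well-definedness of $\Phi$, and this is precisely the inductive apartment computation already carried out in the proof of Lemma~\ref{leReciproc2}, so it need not be redone. Everything else is bookkeeping with the two retractions $r_{C_V^-}$ and $r_{\sfrak_V}$, their distance-preservation, and the uniqueness of minimal galleries of a given type, which together pin down the terminal chamber $C_r'$ as the chamber closest to $C_V^-$ containing $E_V'$ and thereby make $\Phi$ and $\Psi$ inverse to one another.
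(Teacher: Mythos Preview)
Your proof is correct and follows essentially the same approach as the paper: define the map from minimal galleries to $Min(E,F)$ by extracting the type-$F_V$ face of the terminal chamber, use the computation in Lemma~\ref{leReciproc2} for well-definedness, Lemma~\ref{lemminimalpositiv} for surjectivity, and the fact that $D$ is the closest chamber to $C_V^-$ containing $F_V$ (together with uniqueness of minimal galleries of fixed type between fixed endpoints) for injectivity. The paper phrases this as a single bijection $g$ rather than a pair of mutually inverse maps, but the content is identical.
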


\noindent{\bf Proof. }
First recall that $Min(E,F)$ identifies with the set of all faces $E'_V$ such that $(E_V,E'_V)$ is 
a minimal pair and $r_{C_V^-}(E'_V) = F_V$. Next, the proof of Lemma \ref{leReciproc2} asserts that to a 
minimal gallery $\mathbf m'\in \mathcal C_{\sfrak_V}(\mathbf c)$ corresponds such a unique 
face $E'_V = g(\mathbf m')$. It is the face of the same type as $F_V$ contained in the last 
chamber of $\mathbf m'$. Lemma \ref{lemminimalpositiv} shows that this mapping $g$ is surjective. 
Suppose now that $\mathbf m',\mathbf n'\in \mathcal C_{\sfrak_V}(\mathbf c)$ are two minimal 
galleries such that  $E'_V = g (\mathbf m') = g(\mathbf n')$. Since $D$ is the closest chamber to 
$C_V^-$ containing $F_V$ the last chambers of $\mathbf m'$ and of $\mathbf n'$ have to be the 
same. Since they have the same type and the same origin, $\mathbf m' = \mathbf n'$.
\qed


\section{From local properties to global properties}\label{localglobalproperties}
In Theorem~\ref{thmLocMin} we have shown that one can obtain a minimal
two steps gallery by ``unfolding'' a combinatorial two steps gallery
$(E\supset V\subset F)$ only if the latter is positively folded. This provides 
a procedure to unfold a locally positively folded combinatorial one-skeleton gallery 
inductively to get a locally minimal one-skeleton gallery. The first aim
of this section is to show that if one starts with a globally positively folded
gallery, then this unfolding algorithm produces automatically globally minimal one-skeleton galleries. Next we derive a formula for the polynomials $L_{\lambda,\mu}$.

\subsection{From positively folded two-steps galleries to minimal galleries}

\begin{proposition}\label{coro:non-empty}
Let $\delta = [\delta_0,\delta_1,...,\delta_r] = (V_0\subset E_0\supset\cdots\subset 
E_r\supset V_{r+1}) \in\Gamma(\gamma_\lam)$. The intersection $\{\text{minimal galleries}\}\cap C_\delta$ 
is non-empty if, and only if, $\delta$ is positively folded.
\end{proposition}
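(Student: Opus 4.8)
The plan is to prove both implications by reducing the global statement to the local unfolding result, Theorem~\ref{thmLocMin}, the bridge being the identity $C_\delta = r_{-\infty}^{-1}(\delta)$: a point of $C_\delta$ is exactly a one-skeleton gallery $g=(\Lo\subset E'_0\supset V'_1\subset\cdots\supset V'_{r+1})$ in $\cj^{\mathfrak a}$ with $r_{-\infty}(g)=\delta$, and I must decide when such a $g$ can be chosen minimal in the sense of Definition~\ref{dfnMini}.

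For ``$\Rightarrow$'', suppose $g\in C_\delta$ is minimal, with an associated chain of sectors $\underline{\sfrak}(g)=(\sfrak_0,\dots,\sfrak_r)$; by Proposition~\ref{minsklel1} I may take $\sfrak_i=\sfrak_0(V'_i)$, so that each two-step piece $(E'_{i-1}\supset V'_i\subset E'_i)$ of $g$ is itself a minimal two-step gallery. Retracting vertex by vertex via $r_{-\infty}$ (which on the residue building at $V'_i$ is the retraction centred at $C^-_{V_i}$, as in the proof of Lemma~\ref{lemminimalpositiv}) and applying Lemma~\ref{lemminimalpositiv2}, each two-step piece $(E_{i-1}\supset V_i\subset E_i)$ of $\delta$ is positively folded; hence $\delta$ satisfies condition \emph{i)} of Definition~\ref{dfnPosFold1}. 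For condition \emph{ii)} I would realise $r_{-\infty}$ as the limit $\lim_{t\to 0}\eta(t)\cdot$ for a generic antidominant $\eta$ and follow the single direction class $\underline{\sfrak}$ of $g$ through the degeneration: its images at the successive vertices $V_i$ are sectors $\sfrak_i^{\A}$ with $E_i\subset\sfrak_i^{\A}$, and because $r_{-\infty}$ can only fold towards the antidominant direction these form a weakly Bruhat-decreasing chain $\underline{\sfrak}^{\A}_0\ge\cdots\ge\underline{\sfrak}^{\A}_r$, i.e. a defining chain. Thus $\delta$ is globally positively folded.

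For ``$\Leftarrow$'', suppose $\delta$ is positively folded with defining chain $(\tau_0,\dots,\tau_r)$, $\tau_0\ge\cdots\ge\tau_r$. I would construct a minimal $g\in C_\delta$ by unfolding $\delta$ inductively along its vertices: at $V_1$ the two-step gallery $(E_0\supset V_1\subset E_1)$ is positively folded, so by the ``$\Leftarrow$'' half of Theorem~\ref{thmLocMin} (Lemmata~\ref{leReciproc1} and \ref{leReciproc2}) there is a minimal two-step gallery $(E_0\supset V_1\subset E'_1)$ in $\cj^{\mathfrak a}$ with $r_{-\infty}(E'_1)=E_1$, the fold being straightened by the explicit element of~\eqref{eqExplicitMiniGal}; applying that element to the remainder of $\delta$ and repeating at $V_2,V_3,\dots$ yields a gallery $g$ with $r_{-\infty}(g)=\delta$ all of whose two-step pieces are minimal. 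It then remains to upgrade this \emph{local} minimality to \emph{global} minimality, i.e. to exhibit a single equivalence class of sectors realising all the pieces at once, and here the chain $\tau_0\ge\cdots\ge\tau_r$ is exactly what permits the sectors used in the successive unfoldings to be chosen within one fixed direction class, so that $g$ lies in a single sector $\sfrak_0$ and is minimal by Proposition~\ref{minsklel1}.

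The step I expect to be the main obstacle is precisely this local-to-global passage, in both directions. In ``$\Rightarrow$'' one must extract the weakly decreasing chain of directions from the single direction of $g$ under $r_{-\infty}$, controlling carefully that only positive folds occur; in ``$\Leftarrow$'' one must show that the Bruhat-decreasing defining chain forces the locally chosen minimal unfoldings to be mutually compatible rather than merely locally correct. This is exactly the distinction between local and global positive folding exhibited by Example~\ref{locminbutnotglobminexam}, and I expect to lean on the same commutation and ordering properties of $W$ used in the proof of Proposition~\ref{prLocMiniEquivcomb} to make the compatibility explicit.
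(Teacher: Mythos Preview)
Your overall architecture is correct and is essentially the paper's: reduce both implications to the local two-step result (Theorem~\ref{thmLocMin}) via a stepwise retraction/unfolding of the gallery. Two points deserve sharpening.

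First, in ``$\Rightarrow$'' the paper does not separate conditions \emph{i)} and \emph{ii)}; both fall out of the same stepwise retraction. Concretely, write the minimal $\gamma\in C_\delta$ in the cell coordinates $(v_0,\dots,v_r)\in\Stab_-(\delta)$ of Proposition~\ref{pr:Cell}, so that $E'_j=v_0\cdots v_j E_j$, and form the partial retractions
\[
\gamma^{j-1}=(V_0\subset E_0\supset\cdots\supset V_j\subset v_jE_j\supset\cdots).
\]
The two-step piece $(E_{j-1}\supset V_j\subset v_jE_j)$ is a $G(\ck)$-translate of a minimal two-step piece of $\gamma$, hence minimal, with $r_{-\infty}(v_jE_j)=E_j$. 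Then Lemmas~\ref{lemminimalpositiv} and~\ref{lemminimalpositiv2} give both the positive folding of $(E_{j-1}\supset V_j\subset E_j)$ \emph{and} the inequality $\tau_{j-1}\ge\tau_j$, the latter because the retracted sector $r_{-\infty}(\sfrak_j)=\tau_j(C^+)(V_j)$ arises from $\sfrak_{j-1}$ by a sequence of positive folds. There is no need for a separate torus-limit argument for the defining chain.

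Second, and more importantly, your closing plan to ``lean on the same commutation and ordering properties of $W$ used in the proof of Proposition~\ref{prLocMiniEquivcomb}'' is the wrong tool. That proposition manufactures a defining chain from purely local data, and it works only under the special enumeration hypothesis~$(*)$; it is irrelevant here because in ``$\Leftarrow$'' the defining chain $(\tau_0\ge\cdots\ge\tau_r)$ is part of the hypothesis. The paper's argument uses the chain \emph{directly in the construction}: it unfolds from the last vertex $V_r$ backwards, at each step using Theorem~\ref{thmLocMin} with a sector $\sfrak^j$ chosen from the defining chain, and then transports the already-constructed minimal tail by the element $u_{j}\in U^-(\ck)$ produced by the unfolding (minimality being $G(\ck)$-equivariant and $r_{-\infty}$ being $U^-(\ck)$-invariant on fibres). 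Because the sectors are all drawn from the given chain, after the transport they lie in a single equivalence class, so the assembled gallery is globally minimal by Proposition~\ref{minsklel1}. Your forward-from-$V_1$ induction works equally well with the same mechanism; the point is that no Weyl-group commutation is needed, only the transport by the $u_j$'s and the compatibility of the sectors with the given defining chain.
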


\noindent{\bf Proof. } Let $\gamma = (V_0\subset E'_0\supset V'_1\subset\cdots\supset V'_r\subset E'_r\supset 
V'_{r+1})$ be a minimal one-skeleton gallery in the cell $C_\delta$. Since $\gamma$ starts at $V_0=\Lo$, 
we may replace $\gamma$ by $u\gamma$ for some $u\in U^-(\co)$ if necessary and
assume that $E'_0 = E_0$ and $V'_1 = V_1$ are in $\A$. Let $\underline\sfrak'(\gamma) = 
(\sfrak_0,\sfrak'_1,...,\sfrak'_r)$ be the sequence of representatives of the same equivalence class of sectors
such that $V'_i$ is the vertex of $\sfrak'_i$ and $E'_i\subset \sfrak'_i$. The sequence
starts with a sector tipped at $0$ whose image by $r_{-\infty}$ is the chamber 
$\tau_0(C^+)$, for some $\tau_0\in W$. We know that $(E_0\supset V_1\subset E'_1)$ is minimal and such that 
$r_{-\infty}(E'_1) = E_1$, hence Lemma \ref{lemminimalpositiv2} shows that $(E_0\supset V_1\subset E_1)$ is 
positively folded. This means that there exists a face $V_1\subset E''_1\subset\A$ such that $(E_0\supset V_1\subset E''_1)$ is 
minimal and $(E_0\supset V_1\subset E_1)$ is obtained from $(E_0\supset V_1\subset E''_1)$ by a positive folding,
see Lemma~\ref{lemminimalpositiv} and Lemma~\ref{lemminimalpositiv2} and the proofs. 
Now in the proof one may choose for the minimal gallery of residue chambers as last chamber the residue chamber
associated to $\sfrak_0(V_1)=\sfrak_1$.
But $E''_1$ is contained in $\tau_0(C^+)(V_1)$ (see Lemma~\ref{lemminimalpositiv} and its proof) 
and the sector $\sfrak_1$ retracts onto a sector of $\A$ tipped at $V_1$ 
and containing $E_1$. Therefore $r_{-\infty}(\sfrak_1) = \tau_1(C^+)(V_1)$, with $\tau_0 \geq \tau_1$.

We want to repeat this argument to prove the claim in an inductive procedure. To do so,
recall from Proposition \ref{pr:Cell}, that $\gamma$ corresponds to a sequence 
$$
(v_0,v_1,...,v_r)\in \Stab_-(V_0,E_0)\times \Stab_-(V_1,E_1) \times\cdots\times\Stab_-(V_r,E_r)
$$ and that $E'_j = v_0v_1\cdots v_j \delta_0\delta_1\cdots\delta_j E_j^f = v_0v_1\cdots v_j E_j$. The gallery 
$\gamma$ can be retracted step by step, that means that we consider the sequence:
$$
\begin{array}{rcl}
\gamma & = & (V_0\subset E'_0\supset V'_1\subset\cdots\supset V'_r\subset E'_r\supset V'_{r+1}),\\
\gamma^0 & = & (V_0\subset E_0\supset V_1\subset v_1E_1\cdots \subset (v_1\cdots v_r) E_r\supset (v_1\cdots v_r) V_{r+1}),\\
 &\vdots & \\
\gamma^{j-1} & = & (V_0\subset E_0\supset V_1\subset\cdots \subset E_{j-1}\supset V_j\subset v_jE_j\supset\cdots \\
&&\hskip 110pt\cdots \subset (v_j\cdots v_r) E_r\supset (v_j\cdots v_r) V_{r+1})\\
 &\vdots & \\
\gamma^r = \delta & = & (V_0\subset E_0\supset V_1\subset\cdots\supset V_r\subset E_r\supset V_{r+1}).
\end{array}
$$ Now, at each step, $(E_{j-1}\supset V_j\subset v_jE_j)$ is minimal because it is obtained from a minimal 
two-step gallery by applying elements of $G(\ck)$. So, we can repeat the previous arguments to show that 
$\delta$ is globally positively folded.

Reciprocally, we show that if $\delta$ is 
positively folded then one can, inductively, built a minimal gallery that retracts onto it. 
Indeed, we start applying Theorem~\ref{thmLocMin} at the vertex $V_r$. So we get a gallery 
$$
\delta^r = (V_0\subset E_0\supset V_1\subset\cdots \subset E_{r-1}\supset V_r\subset 
E'_r\supset V'_{r+1}),
$$ where $(E_{r-1}\supset V_r\subset E'_r)$ is minimal in an apartment $A_r$, 
$r_{-\infty}(E'_r) = E_r$ and a sequence of sectors $(\sfrak_0,\sfrak_1,...,\sfrak_r)$ 
such that $\underline\sfrak_0\geq\sfrak_1\geq\cdots\geq\underline\sfrak_{r-1} = \underline\sfrak_r$. 
We apply the theorem again at the vertex $V_{r-1}$. So we get a gallery 
$$
\delta^{r-1} = (V_0\subset E_0\supset V_1\subset\cdots \subset E_{r-2}\supset V_{r-1}\subset 
E'_{r-1}\supset V'_{r}),
$$ where $(E_{r-2}\supset V_{r-1}\subset E'_{r-1})$ is minimal in an 
apartment $A_{r-1}$, $r_{-\infty}(E'_{r-1}) = E_{r-1}$ and a sequence of sectors $(\sfrak_0,\sfrak_1,...,\sfrak_{r-1})$ 
such that $\underline\sfrak_0\geq\sfrak_1\geq\cdots\geq\underline\sfrak_{r-2} = 
\underline\sfrak_{r-1}$. Now, since $(E_{r-1}\supset V_r\subset 
E'_r)$ is positively folded in $\A$, there exists a face $F'_r$ of the same type as 
$E_r$ such that $(E_{r-1}\supset V_r\subset F'_r)$ is minimal in $\A$. Since 
$E'_{r-1} = u_{r-1} E_{r-1}$, we can take $A_{r-1} = u_{r-1} \A$ and the image of 
$(E_{r-1}\supset V_r\subset F'_r)$ in $A_{r-1}$ is still minimal. So we complete the 
gallery $\delta^{r-1}$ with it to get a one-skeleton gallery which is minimal after the 
index $r-1$ and contained in the sector $\sfrak_{r-2} = \sfrak_{r-1} = \sfrak_r$ of $A_{r-1}$. 
Iterating this procedure, we get a minimal one-skeleton gallery that retracts onto $\delta$.
\qed

\subsection{A formula for $L_{\lam,\mu}$}\label{formulaL}
For a dominant coweight $\lam$ let $\gamma_\lam$ be a
dominant combinatorial gallery joining $\Lo$ and $\lam$ (see 
Example~\ref{exGalCoweightarbicompletelyfree}). 
The investigation of the intersection 
$Z_{\lam,\mu}$ can be transferred to the Bott-Samelson variety 
$\Sigma(\gamma_\lambda)$:
$$
Z_{\lam,\mu}= G(\co).\lam\cap U^-(\ck).\mu=
\bigcup_{\substack{\delta\in \Gamma(t_{\gamma_\lam}, \Lo)\\ {\rm target}(\delta)=\mu}}
\{\text{minimal galleries}\}\cap C_\delta.
$$
Proposition~\ref{coro:non-empty} states that the intersection $\{\text{minimal galleries}\}\cap C_\delta$
is non-empty if and only if $\delta$ is positively folded. We want to describe the intersection more
precisely.

Recall from Definition \ref{dfn:Min} that for a two steps gallery $(E\supset V\subset F)$ the set $Min(E,F)$ identifies with the 
set of all faces $E'_V$ such that $(E_V,E'_V)$ is a minimal pair and $r_{C_V^-}(E'_V) = F_V$. 

Let
$\delta = (\Lo=V_0\subset E_0\supset \cdots\supset V_r\subset E_r\supset \mu)$ be a 
positively folded combinatorial gallery and
let $B^-\subset G$ be the opposite Borel subgroup. Denote by $D_0$ the chamber in $\A$
which contains $E_0$ and is the closest to $C^-$, and let $w_{D_0}\in W$ be the element such that
$w_{D_0}(C^-)=D_0$.
\begin{proposition}
\label{prMinGlobal}
The set of all minimal one-skeleton galleries in the cell $C_\delta$ identifies with the product
$$
B^- w_{D_0} Q^-_{E_0}/Q^-_{E_0}\times \prod_{j=1}^{r} Min(E_{j-1},E_j)\ .
$$ 
\end{proposition}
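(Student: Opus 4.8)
The plan is to prove the product decomposition by combining the explicit parametrization of the cell $C_\delta$ from Proposition~\ref{pr:Cell} with the local unfolding result Theorem~\ref{thmLocMin} and its refinement in Proposition~\ref{prMinLocal}. Recall from Proposition~\ref{pr:Cell} that $C_\delta$ is identified, via the map $f$, with $\Stab_-(\delta)=\prod_{j=0}^r\Stab_-(V_j,E_j)$, where a point $(v_0,\ldots,v_r)$ corresponds to the one-skeleton gallery $\gamma=(V_0\subset E'_0\supset V'_1\subset\cdots\supset V'_r\subset E'_r\supset V'_{r+1})$ with $E'_j=v_0v_1\cdots v_j E_j$ (and $E'_0=v_0E_0$). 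So the first step is to express the condition ``$\gamma$ is minimal'' factor by factor in these coordinates.

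First I would use Proposition~\ref{minsklel1} together with the step-by-step retraction $\gamma=\gamma^{-1}\rightsquigarrow\gamma^0\rightsquigarrow\cdots\rightsquigarrow\gamma^r=\delta$ introduced in the proof of Proposition~\ref{coro:non-empty}: at each stage the two-step piece $(E_{j-1}\supset V_j\subset v_jE_j)$ is a translate under $G(\ck)$ of a minimal two-step gallery, hence minimal, and conversely minimality of the whole gallery $\gamma$ is equivalent to minimality of each such local two-step piece. This is the crucial reduction: it shows that $\gamma$ is a minimal lift of $\delta$ \emph{if and only if} for every $j=1,\ldots,r$ the face $E'_j$ lies in $Min(E'_{j-1},E_j)$ relative to the vertex $V_j$, where $Min$ is as in Definition~\ref{dfn:Min}. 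Because the edge $E'_{j-1}$ (equivalently the sector direction carried along from the previous step) is exactly the datum that $Min(E_{j-1},E_j)$ depends on, and since the action of $v_0\cdots v_{j-1}\in U^-(\ck)$ carries the standard configuration $(E_{j-1}\supset V_j\subset E_j)$ to $(E'_{j-1}\supset V'_j\subset \cdot)$, the set of admissible $v_j$ is in $U^-(\ck)$-equivariant bijection with $Min(E_{j-1},E_j)$.

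Next I would isolate the first factor. For $j=0$ there is no preceding edge, so the only constraint on $v_0\in\Stab_-(V_0,E_0)$ is that $E'_0=v_0E_0$ together with the chosen sector direction produces a minimal initial segment; by Proposition~\ref{minsklel1}(a),(c) a minimal gallery starting at $\Lo$ is determined up to the $G(\co)$-action by its sector class, and passing to $U^-(\ck)$-orbits (the fibres of $r_{-\infty}$) the admissible directions for $E'_0$ are parametrized by the Schubert cell $B^- w_{D_0} Q^-_{E_0}/Q^-_{E_0}$ in the residue Grassmannian $P_{V_0}/P_{E_0}\cong \ch_{V_0}/Q_{E_0}$, with $w_{D_0}$ the Weyl element sending $C^-$ to the chamber $D_0$ closest to $C^-$ containing $E_0$. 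This uses the identification $\Stab_-(V,F)\simeq U_V^-(\overline{w_F})$ established just before Proposition~\ref{pr:Cell}.

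Assembling these, the parametrization $f$ of Proposition~\ref{pr:Cell} restricts to a bijection between the minimal galleries in $C_\delta$ and the product $\bigl(B^- w_{D_0} Q^-_{E_0}/Q^-_{E_0}\bigr)\times\prod_{j=1}^r Min(E_{j-1},E_j)$; the product structure is immediate because $\Stab_-(\delta)$ is already a direct product and the minimality condition, by the reduction above, decouples into one condition per factor. The main obstacle is the middle step: verifying rigorously that minimality of $\gamma$ is genuinely \emph{local} along this particular retraction path, i.e.\ that no global obstruction (of the kind illustrated in Example~\ref{locminbutnotglobminexam}) survives once $\delta$ is assumed positively folded. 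The key point making this work is that, unlike the ``free'' concatenations of Example~\ref{locminbutnotglobminexam}, here each $v_j$ is \emph{chosen} to make its two-step piece minimal with the inherited direction, and Proposition~\ref{coro:non-empty} guarantees the compatibility of these local choices into one globally minimal gallery; I would lean on the inductive unfolding in that proof to carry the sector direction coherently from $V_r$ back to $V_0$, thereby confirming the factorization is well defined and surjective.
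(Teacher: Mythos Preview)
Your approach is essentially the same as the paper's: both use the parametrization of $C_\delta$ by $\Stab_-(\delta)$ from Proposition~\ref{pr:Cell}, invoke the step-by-step retraction argument from the proof of Proposition~\ref{coro:non-empty} to show that global minimality of $\gamma$ decouples into the local conditions $v_jE_j\in Min(E_{j-1},E_j)$ for $j\ge 1$, and identify the $j=0$ factor with the full Schubert cell $B^-w_{D_0}Q^-_{E_0}/Q^-_{E_0}$ via the identification $\Stab_-(V_0,E_0)\simeq U_{V_0}^-(\overline{w_{D_0}})$. The paper packages this as a single map $h(\gamma)=(E'_0,v_1E_1,\ldots,v_rE_r)$ and checks it is well defined and surjective by pointing back to Proposition~\ref{coro:non-empty}, with injectivity from Proposition~\ref{pr:DescStab-}; you express the same content as a restriction of the bijection $f$.

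One small notational slip: writing ``$E'_j\in Min(E'_{j-1},E_j)$ relative to the vertex $V_j$'' is ill-formed, since $E'_{j-1}$ has vertex $V'_j$, not $V_j$. What you mean---and immediately say in the next sentence---is that after conjugating by $(v_0\cdots v_{j-1})^{-1}$ the condition becomes $v_jE_j\in Min(E_{j-1},E_j)$ at the vertex $V_j$ in $\A$; this is exactly the paper's formulation. Also, for the first factor there is in fact \emph{no} minimality constraint on $v_0$: the whole of $\Stab_-(V_0,E_0)$ already coincides with the Schubert cell, so that factor is simply the entire first coordinate of $C_\delta$.
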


\noindent{\bf Proof. } If $\gamma = (V_0\subset E'_0\supset V'_1\subset\cdots\supset V'_r\subset E'_r\supset V'_{r+1})$ 
is a minimal gallery of $C_\delta$, then $E'_0$ identifies with an element of the orbit $B^- w_{D_0} Q^-_{E_0}/Q^-_{E_0}$. 
Further, to $\gamma\in C_\delta$ corresponds a sequence $(v_0,v_1,...,v_r)$ in $$
\Stab_-(\delta) 
= \Stab_-(V_0,E_0)\times \Stab_-(V_1,E_1) \times\cdots\times\Stab_-(V_r,E_r).
$$ In the proof of 
Proposition \ref{coro:non-empty}, we have seen that $h(\gamma) := (E'_0, v_1E_1,..., v_rE_r)$ 
belongs to $B^- w_{D_0} Q^-_{E_0}/Q^-_{E_0}\times \prod_{j=1}^{r} Min(E_{j-1},E_j)$, and 
we have also seen that $h$ is surjective. The fact that $h$ is injective is a consequence of 
Proposition \ref{pr:DescStab-}.
\qed 

\medskip
Let $\mathbb F_q$ be the finite field with $q$ elements and replace 
the field of complex numbers by the algebraic closure $K$ of $\mathbb F_q$. 
Assume that all groups are defined and split over $\mathbb F_q$. We replace
now $\ck$ by $\ck_q = \mathbb F_q(\!(t)\!)$, the field of Laurent series, and 
$\co$ by $\co_q = \mathbb F_q[[t]]$.  For a given
positively folded gallery $\delta = [\delta_0,\delta_1,...,\delta_r] = 
(V_0\subset E_0\supset V_1\subset\cdots\subset E_r\supset V_{r+1})$ 
we want to count the number of points (over $\mathbb F_q$) of the intersection 
$$
\{\text{minimal galleries}\}\cap C_\delta\ .
$$ 
For convenience, we first fix (and recall) some notation: $\forall j = 0, 1,..., r$, let
\begin{itemize}
\item $D_j $ be the closest chamber to $C_{V_j}^-$ containing $(E_j)_{V_j}$;
\item $\sfrak^j\supset E_{j-1}$ be a sector with vertex $V_j$ such that there exists a face $F'_j\subset -\sfrak^j$ containing $V_j$ of the same type as $E_j$;
\item $\mathbf i_j = ((i_j)_1,...,(i_j)_{r_j})$ be a reduced decomposition of $w(C_{V_j}^-, D_j)$.
\end{itemize}
We denote by $\Gamma_{\sfrak^j_{V_j}}^+(\mathbf i_j, op)$ the set of all galleries 
$\mathbf c = (C_{V_j}^-,C_1,...,C_{r_j})$ of residue chambers of type $\mathbf i_j$ 
which are positively folded with respect to $\sfrak^j_{V_j}$ and have the property that the 
face $(E_j')_{V_j}$ of the same type as $(E_j)_{V_j}$ contained in $C_{r_j}$ forms a minimal pair with 
$(E_{j-1})_{V_j}$ in $\A_{V_j}$. 

The exponents in the formula are, first, the length $\ell (w_{D_0})$ and, second, for each $\mathbf c = (c_1,...,c_{r_j})\in \Gamma_{\sfrak^j_{V_j}}^+(\mathbf i_j, op)$,
the nonnegative integers $t(\mathbf c)$ and $r(\mathbf c)$ defined in Lemma~\ref{reMinCell}:
$t(\mathbf c) = \sharp\{k\mid c_k = s_{(i_j)_k} \hbox{ and } w_{\sfrak^j_{V_j}}^{-1} \be_k < 0\}$ and 
$r(\mathbf c) = \sharp\{j\mid c_k = 1 \hbox{ and } w_{\sfrak^j_{V_j}}^{-1} \be_k < 0\}.$ Combining Lemma \ref{reMinCell}, Theorem \ref{thmLocMin} and 
Propositions~\ref{prMinLocal} -- \ref{prMinGlobal}, 
we obtain the following formula:
\begin{theorem}\label{Lpolynomialformula}
$$
L_{\lambda,\mu} (q) = \sum_{\delta\in\Gamma^+(\gamma_\lambda, \mu)} q^{\ell(w_{D_0})} 
\bigg(\prod_{j=1}^r \  \sum_{\mathbf c\in \Gamma_{\sfrak^j_{V_j}}^+(\mathbf i_j, op)} q^{t(\mathbf c)} (q-1)^{r(\mathbf c)}\bigg)\ .
$$ 
\end{theorem}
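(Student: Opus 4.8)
The plan is to compute $L_{\lam,\mu}(q)=\vert Z_{\lam,\mu}^q\vert$ by counting $\mathbb F_q$-points stratum by stratum in the decomposition
$$
Z_{\lam,\mu}^q=\bigcup_{\substack{\delta\in \Gamma(t_{\gamma_\lam}, \Lo)\\ {\rm target}(\delta)=\mu}}\{\text{minimal galleries}\}\cap C_\delta,
$$
assembling the structural results established in the preceding sections. First I would apply Proposition~\ref{coro:non-empty} to discard every index $\delta$ which is not positively folded, since for those the intersection $\{\text{minimal galleries}\}\cap C_\delta$ is empty; this replaces the indexing set by $\Gamma^+(\gamma_\lam,\mu)$, exactly the outer sum of the asserted formula. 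Because the Bia\l ynicki-Birula cells $C_\delta$ are pairwise disjoint, $\vert Z_{\lam,\mu}^q\vert$ is the sum over the surviving $\delta$ of the number of $\mathbb F_q$-points in each nonempty stratum, and it remains to count these local contributions.

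Next, for a fixed positively folded $\delta=(V_0\subset E_0\supset\cdots\subset E_r\supset \mu)$ I would invoke the product decomposition of Proposition~\ref{prMinGlobal}, identifying the minimal galleries in $C_\delta$ with
$$
B^- w_{D_0} Q^-_{E_0}/Q^-_{E_0}\times \prod_{j=1}^{r} Min(E_{j-1},E_j).
$$
The first factor is a Schubert cell for the opposite Borel; since $w_{D_0}$ sends $C^-$ to the chamber $D_0$ closest to $C^-$ containing $E_0$, this cell is isomorphic to affine space of dimension $\ell(w_{D_0})$ and therefore contributes $q^{\ell(w_{D_0})}$ points over $\mathbb F_q$, the prefactor attached to $\delta$. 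For each remaining factor, Proposition~\ref{prMinLocal} writes $Min(E_{j-1},E_j)$ as the disjoint union $\coprod_{\mathbf c\in\Gamma^+_{\sfrak^j_{V_j}}(\mathbf i_j,op)}\mathcal C^m_{\sfrak^j_{V_j}}(\mathbf c)$ of minimal cells indexed by positively folded galleries of residue chambers, and Lemma~\ref{reMinCell} supplies the explicit isomorphism $\mathcal C^m_{\sfrak^j_{V_j}}(\mathbf c)\simeq \mathbb C^{t(\mathbf c)}\times(\mathbb C^*)^{r(\mathbf c)}$.

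The remaining step is to turn these complex isomorphisms into $\mathbb F_q$-point counts. Since all the varieties involved — the Schubert cell, the Bott-Samelson varieties $\BS(\mathbf i_j)$ and the cells $\mathcal C^m_{\sfrak^j_{V_j}}(\mathbf c)$ inside them — are defined and split over $\mathbb F_q$, the cell $\mathbb C^{t(\mathbf c)}\times(\mathbb C^*)^{r(\mathbf c)}$ contributes exactly $q^{t(\mathbf c)}(q-1)^{r(\mathbf c)}$ points. Summing over $\mathbf c\in\Gamma^+_{\sfrak^j_{V_j}}(\mathbf i_j,op)$ gives the factor counting $Min(E_{j-1},E_j)$, and the bijection of Proposition~\ref{prMinGlobal}, being realized on $\mathbb F_q$-points and not merely on $K$-points, makes the total count multiplicative, so the number of $\mathbb F_q$-points in $\{\text{minimal galleries}\}\cap C_\delta$ equals $q^{\ell(w_{D_0})}\prod_{j=1}^r\sum_{\mathbf c}q^{t(\mathbf c)}(q-1)^{r(\mathbf c)}$. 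Substituting into the outer sum and invoking the Fact $L_{\lam,\mu}(q)=\vert Z_{\lam,\mu}^q\vert$ yields the formula.

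The main obstacle, and the point requiring the most care, is precisely this descent to $\mathbb F_q$: one must verify that the set-theoretic bijections of Propositions~\ref{prMinLocal} and~\ref{prMinGlobal}, together with the cell isomorphisms of Lemma~\ref{reMinCell}, are induced by morphisms defined over $\mathbb F_q$, so that the multiplicativity $\vert\prod\vert=\prod\vert\cdot\vert$ is legitimate and $\mathbb A^1$ and $\mathbb G_m$ contribute $q$ and $q-1$ with no Frobenius twisting. Conceptually this is the \emph{geometric compression} alluded to in the introduction: the strata $S_\Delta$ of the alcove-gallery decomposition of \cite{GL} regroup into the coarser pieces $Z_{\lam,\mu}\cap C_\delta$, and one should confirm that the local counts at the successive vertices $V_j$ genuinely multiply rather than interact. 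Once these defining-field matters are settled, the formula follows by direct substitution.
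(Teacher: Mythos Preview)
Your proposal is correct and follows essentially the same approach as the paper, which simply states that the formula is obtained by combining Lemma~\ref{reMinCell}, Theorem~\ref{thmLocMin}, and Propositions~\ref{prMinLocal}--\ref{prMinGlobal}. You are in fact more explicit than the paper about the stratification via Proposition~\ref{coro:non-empty} and the descent to $\mathbb F_q$, points which the paper takes for granted.
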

\begin{rem}\rm
According to a result of Katz (Theorem 6.1.12) in \cite{Ka}), the value $L_{\lambda,\mu}(1)$ gives the Euler-Poincar\' e 
characteristic of the variety $G(\co).\lam\cap U^-(\ck).\mu$. Now a summand above is nonzero if and only if
the gallery is minimal. It is easy to see that a gallery is minimal if and only if the gallery has as a target
an extremal weight. Thus, we recover a result of Ng\^o and Polo \cite{NP}, saying 
that this characteristic is $1$ if $\mu$ is in the orbit $W\lambda$, and it is $0$ otherwise.

\end{rem}
\begin{exam}\rm
Let us consider an example in type $A_2$. Let $\lambda = 2 \omega_1 + \omega_2$ where $\omega_i$ are the 
fundamental coweights. There are three possibilities for $\mu \leq \lambda$ in the fundamental Weyl chamber: 
$\mu = \lambda$, $\mu = 2\omega_2$ and $\mu = \omega_1$. If $\mu= \lam$, then one finds 
$L_{\lambda,\lam} (q) = q^2q^2q^2$. In the second case, $L_{\lambda,2\omega_2} (q) = q (q-1)qq^2$. 
Finally, if $\mu = \omega_1$, there are two one-skeleton galleries starting in $\Lo$ and ending in $\omega_1$. 
Let us explain the computation in the case of the gallery 
$(\Lo\subset E_1\supset V_1\subset E_2\supset V_2\subset E_3\supset \omega_1)$ plotted in the picture below.

At the vertex $\Lo$, $\ell(w_{D_0}) = 1$, therefore we get a $q$. At the vertex $V_1$, 
there is only one gallery $\mathbf c$ of residue chambers positively folded with respect 
to $\sfrak^1$, starting in $C^-_{V_1}$ and ending in a chamber containing an opposite to 
$E_1$. This gallery $\mathbf c$ has a positively (with respect to $\sfrak^1$) folding on one 
wall and crosses positively another, so we have $t(\mathbf c) = 1$ and $r(\mathbf c) = 1$, 
whence we get $(q-1)q$. At the vertex $V_2$, the gallery $(E_2\supset V_2\subset E_3)$ is 
minimal. The gallery of residue chambers has only two terms and positively (with respect to 
another sector) crosses the vertical wall, therefore, we get $q$. One computes in an analogous 
way the number of minimal one-skeleton galleries retracting on the second gallery ending in 
$\omega_1$ and one gets $q(q-1) q^2$. 
Finally, $L_{\lambda,\omega_1} (q) = q(q-1)qq + q(q-1) q^2 = 2(q-1)q^3$.

\begin{center}
 \setlength{\unitlength}{1.5cm}
\begin{picture}(8,7)
\thinlines
\put(1,2){\line(0,1){5}}
\put(2,1.5){\line(0,1){5}}
\put(3,1){\line(0,1){6}}
\put(4,1){\line(0,1){6}}
\put(5,1){\line(0,1){6}}

\put(1,2){\line(2,1){5}}
\put(0,2.5){\line(2,1){6}}
\put(0,3.5){\line(2,1){6}}
\put(0,4.5){\line(2,1){5}}
\put(0,5.5){\line(2,1){3}}
\put(0,6.5){\line(2,1){1}}
\put(2,1.5){\line(2,1){4}}
\put(3,1){\line(2,1){3}}

\put(6,1.5){\line(-2,1){6}}
\put(6,2.5){\line(-2,1){6}}
\put(6,3.5){\line(-2,1){6}}
\put(6,4.5){\line(-2,1){5}}
\put(6,5.5){\line(-2,1){3}}
\put(6,6.5){\line(-2,1){1}}
\put(5,1){\line(-2,1){5}}

\put(3,4){\circle*{0.15}}
\put(3,6){\circle*{0.15}}
\put(4,4.5){\circle*{0.15}}
\put(5,6){\circle*{0.15}}
\put(2,4.5){\circle*{0.15}}
\put(3,5){\circle*{0.15}}

\put(4.8,6.15){$\lam$}
\put(2.8,6.15){$2\omega_2$}
\put(4.15,4.45){$\omega_1$}
\put(2.85,3.8){$\Lo$}
\put(2.5,4.3){$E_1$}
\put(1.75,4.65){$V_1$}
\put(2.3,4.85){$E_2$}
\put(2.7,5.15){$V_2$}
\put(3.4,4.85){$E_3$}

\put(2.5,3.4){$C^-_{\Lo}$}
\put(1.5,3.9){$C^-_{V_1}$}
\put(2.2,4){$\sfrak^1$}

\thicklines
\put(3,4){\line(2,1){2}}
\put(5,5){\vector(0,1){1}}
\put(3,4){\line(-2,1){1}}
\put(2,4.5){\line(2,1){1}}
\put(3,5){\vector(0,1){1}}
\put(3,5){\vector(2,-1){1}}
\put(3,4){\line(0,-1){1}}
\put(3,3){\line(2,1){1}}
\put(4,3.5){\vector(0,1){1}}

\put(1.75,4.2){\line(1,0){.4}}
\put(2.15,4.2){\vector(1,1){.3}}
\put(2,4.2){\vector(-1,1){.35}}
\end{picture}
\end{center}
\vskip -30pt
\end{exam}
\section{Dimension of $r_{-\infty}^{min}(\delta)$, LS-galleries and Young tableaux}\label{LSAndYoungTableaux}

We want now to discuss some examples and the connection with the work
of Lakshmibai, Musili and Seshadri. 
Recall that the theory of a path model for a representation is a generalization of the original
idea of Lakshmibai, Musili and Seshadri (see for example \cite{LMS}, \cite{LS1}, \cite{LS2})
to index a basis of fundamental representation by sequences of Weyl group
elements satisfying certain combinatorial conditions. Monomials of these basis elements then form
a generating system for the other irreducible representations (respectively the corresponding dual 
Weyl modules in positive characteristic), 
and the aim was to show that 
special monomials, the {\it standard monomials = } monomials having a defining chain, form 
in fact a basis. This program was successfully realized in many cases, for example for all representations
of the classical groups but also in many other cases (see ibidem). The path model theory provided
a new approach and made it possible to prove the conjecture for the character in full generality
for Kac-Moody algebras \cite{L1}, the construction of an associated standard monomial theory is  
discussed in \cite{L3}.

\subsection{LS one-skeleton galleries}

Given a dominant coweight $\lam$, let $\gamma_\lam$
be a combinatorial one-skeleton gallery as in Example~\ref{exGalCoweightarbicompletelyfree}.
Let $\delta=(V_0=\Lo\subset E_0\supset\ldots \supset V_{r+1})$ 
be a positively folded combinatorial one-skeleton gallery of the same type as $\gamma_\lam$.
By Proposition~\ref{coro:non-empty} we know that the intersection of the set of minimal galleries
$G(\co).\gamma_\lam$ with the cell $C_\delta$ is a dense subset of $C_\delta$, so 
$$
\dim\,r_{-\infty}^{min}(\delta)=\dim\,(\{\text{minimal galleries}\}\cap C_{\delta})=\dim (C_{\delta}).
$$
The dimension of the cell can be computed by Proposition~\ref{pr:Cell} using combinatorial
properties of the gallery: given an affine root $(\alpha,n)$, $\alpha>0$, a vertex 
$V\in H_{\alpha,n}$ and an edge $E$ in $\A$, then we say that $(V,E)$ {\it crosses
the wall} (or hyperplane) $H_{\alpha,n}$ in the {\it positive (negative) direction} if 
$F\not\subset H_{\al,n}^-$ (respectively $F\not\subset H_{\al,n}^+$).
\begin{rem}\label{crossingroot}\rm
Using the terminology of section~\ref{suseCells}, an equivalent formulation is to say that
a {\it wall crossing is positive} if $(-\al,-n)\in \Phi_-^{\mathfrak a}(V,F)$. 
\end{rem}
For the gallery $\delta$ denote by $\sharp^+\delta$ the number of positive wall crossings, 
by $\sharp^-\delta$ the number of negative wall crossings and by $\sharp^\pm\delta$
the number of all wall crossings:
$$
\begin{array}{rcl}
\sharp^+\delta&=&\sum_{i=0}^r (\sharp\,\text{positive wall crossings of}\,(V_i,E_i))\\
\sharp^-\delta&=&\sum_{i=0}^r (\sharp\,\text{negative wall crossings of}\,(V_i,E_i))\\
\sharp^\pm\delta&=& \sharp^+\delta+ \sharp^-\delta.
\end{array}
$$
For the last number we have $\sharp^\pm\delta=\sharp^+\gamma_{\lam}=\langle\lam,2\rho\rangle$
because it depends only on the type of the gallery.
Together with Remark~\ref{crossingroot} and Proposition~\ref{pr:Cell} we get:
\begin{lemma}
$\sharp^+\delta=\dim (C_\delta)$.
\end{lemma}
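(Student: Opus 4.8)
The plan is to observe that the identity is an immediate consequence of Proposition~\ref{pr:Cell} together with the reformulation of positive wall crossings recorded in Remark~\ref{crossingroot}; the only real work is to match the two indexing sets by a sign change. First I would invoke Proposition~\ref{pr:Cell}, which identifies the cell $C_\delta$ with the product $\mathbb C^{\Phi_-^{\mathfrak a}(V_0,E_0)}\times\cdots\times\mathbb C^{\Phi_-^{\mathfrak a}(V_r,E_r)}$. Taking dimensions factor by factor gives
$$
\dim(C_\delta)=\sum_{i=0}^r\bigl|\Phi_-^{\mathfrak a}(V_i,E_i)\bigr|,
$$
so it suffices to show, for each fixed $i$, that the number of positive wall crossings of the pair $(V_i,E_i)$ equals $\bigl|\Phi_-^{\mathfrak a}(V_i,E_i)\bigr|$.

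Next I would exhibit the matching bijection. By definition a positive wall crossing of $(V_i,E_i)$ is a wall $\ch_{\alpha,n}$ with $\alpha\in\Phi^+$, $V_i\subset\ch_{\alpha,n}$ and $E_i\not\subset\ch^-_{\alpha,n}$, and Remark~\ref{crossingroot} states exactly that such a crossing is positive if and only if $(-\alpha,-n)\in\Phi_-^{\mathfrak a}(V_i,E_i)$. I would then check that the involution $(\alpha,n)\mapsto(-\alpha,-n)$ does the bookkeeping: it sends a positive root to a negative one, leaves the wall unchanged since $\ch_{\alpha,n}=\ch_{-\alpha,-n}$, and interchanges the two closed half-spaces, $\ch^-_{\alpha,n}=\ch^+_{-\alpha,-n}$. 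Hence the defining conditions $\alpha\in\Phi^+$, $V_i\subset\ch_{\alpha,n}$, $E_i\not\subset\ch^-_{\alpha,n}$ for a positive crossing correspond precisely to the conditions $-\alpha\in\Phi_-$, $V_i\subset\ch_{-\alpha,-n}$, $E_i\not\subset\ch^+_{-\alpha,-n}$ defining membership of $(-\alpha,-n)$ in $\Phi_-^{\mathfrak a}(V_i,E_i)$. This assignment is manifestly invertible, so the positive crossings of $(V_i,E_i)$ are in bijection with $\Phi_-^{\mathfrak a}(V_i,E_i)$, and summing over $i=0,\ldots,r$ yields $\sharp^+\delta=\dim(C_\delta)$.

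I do not anticipate any serious obstacle: the statement is essentially a bookkeeping identity, and Remark~\ref{crossingroot} already packages the delicate point, namely the consistent handling of the sign conventions and the attendant interchange of $\ch^+$ and $\ch^-$. The one thing I would verify explicitly is that positive crossings at a fixed vertex $V_i$ are counted without repetition, i.e. that distinct affine roots $(\alpha,n)$ record distinct positive crossings, so that $\sharp^+\delta$ is genuinely a sum of cardinalities of sets of affine roots rather than a sum with multiplicities; this is immediate from the definitions, since each crossing is indexed by its affine root.
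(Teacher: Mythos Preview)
Your proposal is correct and follows exactly the approach of the paper: the lemma is stated immediately after Remark~\ref{crossingroot} with the one-line justification ``Together with Remark~\ref{crossingroot} and Proposition~\ref{pr:Cell} we get'', and your argument simply unpacks that sentence by making the bijection $(\alpha,n)\mapsto(-\alpha,-n)$ explicit. There is nothing to add.
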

An upper bound for $\sharp^+\delta$ can be determined using the target of the gallery:
\begin{proposition}\label{dimensioninequality}
Let $\mu$ be the target of $\delta$, then $\sharp^+\delta\le \langle \lam+\mu,\rho\rangle$.
\end{proposition}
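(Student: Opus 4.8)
The plan is to turn the assertion into a statement about signed wall-crossings and then bound that signed count using the positive-folding hypothesis, organized along the fundamental pieces of the type $t_{\gamma_\lam}$. First I would combine the two facts already in hand: the Lemma $\sharp^+\delta=\dim C_\delta$ and the identity $\sharp^+\delta+\sharp^-\delta=\langle\lam,2\rho\rangle$ (which depends only on the type). Since $2\rho=\sum_{\alpha\in\Phi^+}\alpha$, one has $\langle\lam+\mu,\rho\rangle=\tfrac12(\sharp^+\delta+\sharp^-\delta)+\tfrac12\langle\mu,2\rho\rangle$, so the claimed inequality $\sharp^+\delta\le\langle\lam+\mu,\rho\rangle$ is equivalent to
\[
\sharp^+\delta-\sharp^-\delta\ \le\ \langle\mu,2\rho\rangle ,
\]
and this is the inequality I would actually establish.

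Next I would rewrite both sides as sums over $\Phi^+$ and over the steps of $\delta$. Writing $\nu_i:=V_{i+1}-V_i$ for the $i$-th edge vector, and recalling that $(V_i,E_i)$ meets a wall $\ch_{\alpha,n}$ (necessarily with $n=-\langle V_i,\alpha\rangle$) exactly when $\langle V_i,\alpha\rangle\in\bz$, and that the crossing is positive or negative according to the sign of $\langle\nu_i,\alpha\rangle$, one gets $\sharp^+\delta-\sharp^-\delta=\sum_{\alpha\in\Phi^+}\sum_{i=0}^{r}\sgn\langle\nu_i,\alpha\rangle\,\mathbf{1}[\langle V_i,\alpha\rangle\in\bz]$, while $\langle\mu,2\rho\rangle=\sum_{\alpha\in\Phi^+}\sum_{i=0}^{r}\langle\nu_i,\alpha\rangle$ by telescoping $\langle\mu,\alpha\rangle=\langle V_{r+1}-V_0,\alpha\rangle$. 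Hence it suffices to prove that the total excess $\mathcal E:=\sum_{\alpha,i}\bigl(\sgn\langle\nu_i,\alpha\rangle\,\mathbf{1}[\langle V_i,\alpha\rangle\in\bz]-\langle\nu_i,\alpha\rangle\bigr)$ is $\le 0$. The crucial structural input is that, because the vertices of each fundamental gallery $\gamma_\omega$ sit exactly at the wall-crossings of its defining ray (Example~\ref{exGalsimpleCoweight}), no edge of $\delta$ crosses a wall in its interior; thus $|\langle\nu_i,\alpha\rangle|\le 1$ for all $\alpha\in\Phi^+$ and all $i$. Consequently any edge joining two special vertices has $\langle\nu_i,\alpha\rangle\in\{-1,0,1\}$, so $\sgn\langle\nu_i,\alpha\rangle=\langle\nu_i,\alpha\rangle$ and that step contributes $0$ to $\mathcal E$; in particular every minuscule block contributes nothing (this is why the bound is an equality in the minuscule, e.g.\ ${\tt A}_n$, case).

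It remains to treat the blocks corresponding to non-minuscule fundamental galleries, where an intermediate non-special vertex occurs and $\langle\nu_i,\alpha\rangle$ may be a proper fraction (a half-integer in the classical types). Here I would compare such a block with its unfolded (minimal) model: for a minimal block the two fractional half-contributions cancel and the block excess is $0$, whereas a fold at the intermediate vertex $V_i$ is a reflection $s_{\beta,n}$ with $\beta\in\Phi_{V_i}$ that moves the continuation away from $C^-(V_i)$ (Definitions~\ref{posfoldefn} and \ref{posfoldefn2}). The point to prove is that a \emph{positive} fold redistributes the roots $\alpha$ with $\langle\nu_i,\alpha\rangle=+\tfrac12$ versus $=-\tfrac12$ so that at least as many are pushed toward $C^-(V_i)$ as away from it, forcing the contribution of the fold to $\mathcal E$ to be $\le 0$; summing over all blocks yields $\mathcal E\le 0$ and hence the proposition.

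The hard part will be precisely this last sign computation. The naive hope that the excess is non-positive root-by-root, or even block-by-block for a fixed root, is false — a positive fold can overcount the up-crossings of an individual root — so the cancellation must be organized over the whole of $\Phi^+$ (equivalently, over the subsystem $\Phi_{V_i}$ attached to the intermediate vertex) simultaneously, while carefully tracking which roots actually lie on a wall through the non-special $V_i$. I expect the cleanest route is to phrase the fold contribution inside the residue building $\cj^{\mathfrak a}_{V_i}$ and in terms of the reflection length in $W^v_{V_i}$, using — exactly as in the unfolding analysis of Section~\ref{localmin} — that a positive fold strictly decreases the relevant Weyl-group length, so that the associated sign count cannot be positive.
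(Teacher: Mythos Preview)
Your approach is genuinely different from the paper's, and the hard step you flag at the end is not actually carried out; as written, the argument is incomplete.

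The paper's proof is a two-line indirect argument. It does not compute $\sharp^+\delta-\sharp^-\delta$ at all. Instead it uses the representation-theoretic fact that $P_\lam\to s_\lam$ as $q\to\infty$, which forces $q^{-\langle\rho,\lam+\mu\rangle}L_{\lam,\mu}\in\bz[q^{-1}]$, so $\deg L_{\lam,\mu}\le\langle\rho,\lam+\mu\rangle$. Then Theorem~\ref{Lpolynomialformula} exhibits $L_{\lam,\mu}$ as a sum over positively folded $\delta$ of polynomials whose leading term is $q^{\dim C_\delta}$ with coefficient $+1$; since these leading terms cannot cancel, each $\dim C_\delta=\sharp^+\delta$ is bounded by the degree of $L_{\lam,\mu}$. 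The inequality thus comes for free from the known asymptotics of Hall--Littlewood polynomials rather than from a root-by-root analysis.

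Your reduction to $\sharp^+\delta-\sharp^-\delta\le\langle\mu,2\rho\rangle$ and the telescoping rewrite are correct, and your observation that edges of fundamental type satisfy $|\langle\nu_i,\alpha\rangle|\le 1$ is fine. The gap is exactly where you say it is: for a non-minuscule block you assert that a positive fold makes the block-excess nonpositive, but you only \emph{expect} this and point toward residue-building arguments without proving it. Your heuristic that ``a positive fold strictly decreases the relevant Weyl-group length, so the sign count cannot be positive'' does not suffice: positivity of the fold only gives $s_\beta\tau<\tau$ in $W/W_\omega$, with no control on how much the length drops, and the excess involves fractional contributions from roots \emph{outside} the local system $\Phi_{V_i}$. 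In fact the paper carries out precisely this delicate computation later (the Proposition comparing $\sharp^+\delta$ and $\sharp^+\delta_0$ after a single minimal fold, via Lemma~\ref{simplelemma}), and there one sees that the excess vanishes only when the fold is an LS-fold; a general positive fold gives a strict inequality coming from the roots in the set called \textit{Rest}. So your program can be completed, but it requires exactly that root-system lemma and the decomposition $B_\kappa^+=B_\tau^+\cup\{\beta\}\cup\text{Rest}$ --- it is not a consequence of length decrease alone. Until that is written out, the proposal is a strategy rather than a proof.
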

\proof
Since $P_\lam\rightarrow s_\lam$ we know that $q^{-\langle\rho,\lam+\mu\rangle} L_{\lam,\mu}\in\bz[q^{-1}]$,
so the power of the leading term in $L_{\lam,\mu}$ is less or equal to $\langle\rho,\lam+\mu\rangle$.
By the formula in Theorem~\ref{Lpolynomialformula}, the maximal power of the contribution coming from
a positively folded gallery $\delta$ occurs with coefficient $+1$. The maximal power of the term coming from $\delta$
is $\dim (C_\delta)$, which proves the claim.
\qed
\medskip
\begin{dfn}\rm
We call a positively folded combinatorial one-skeleton gallery $\delta$ of the same type as $\gamma_\lam$ a {\it LS-gallery} 
if $\sharp^+\delta=\langle\lam+\mu,\rho\rangle$, where $\mu$ is the target of $\delta$.
\end{dfn}

\begin{rem}
\label{reMinLS}
All minimal combinatorial one-skeleton galleries are LS-gal\-leries. Indeed, if $\delta$ is a minimal gallery with target $\mu$, then 
$\langle\mu,2\rho\rangle=\sharp^+\delta-\sharp^-\delta$, so
$$
\sharp^+\delta=\frac{1}{2}(\sharp^+\delta-\sharp^-\delta+\sharp^\pm\delta)=\frac{1}{2}\langle\lam+\mu,2\rho\rangle
=\langle\lam+\mu,\rho\rangle.
$$
\end{rem}

\subsection{Reduction to the case of a fundamental weight}

To describe the connection of the path model with LS-galleries in the one-skeleton,
one of the first steps is the reduction to the case of
a fundamental weight. 
\begin{lemma}\label{minusculegallery}
\begin{enumerate}
  \item If $\omega$ is a minuscule coweight, then all combinatorial galleries of the same type as $\gamma_\om$
are LS-galleries.
  \item Suppose $\delta_1,\ldots,\delta_r$ are positively folded combinatorial galleries of the same type
as $\gamma_{\lam_1},\ldots,\gamma_{\lam_r}$ respectively. Suppose the concatenation
$\delta=\delta_1*\ldots*\delta_r$ is positively folded. Then $\delta$ is an LS-gallery if and only
if each of the $\delta_j$, $j=1,\ldots,r$, is a LS-gallery.
\end{enumerate}
\end{lemma}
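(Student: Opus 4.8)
The plan is to reduce both statements to a single additivity principle for the positive wall--crossing number $\sharp^+$, combined with the dimension bound of Proposition~\ref{dimensioninequality}. For part (1), I would first recall that when $\omega$ is minuscule one has $\langle\omega,\beta^\vee\rangle\le 1$ for every positive root, so $\gamma_\om=(\Lo\subset E\supset\omega)$ consists of a single edge; hence every combinatorial gallery of the same type as $\gamma_\om$ has the form $(\Lo\subset\sigma(E)\supset\sigma(\omega))$ for some $\sigma\in W/W_\omega$. Such a gallery is automatically minimal in the sense of Definition~\ref{dfnMini}: the sector $\sfrak_0=\sigma(C^+)$ has vertex $\Lo$ and its closure contains $\sigma(E)$, and since there is a single edge there is no interior compatibility left to verify. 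By Remark~\ref{reMinLS} every minimal combinatorial one-skeleton gallery is an LS-gallery, so part (1) follows at once.

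For part (2), I would write $\mu_j$ for the target of $\delta_j$ (taken as a standalone gallery starting at $\Lo$, so a displacement) and $\mu$ for the target of $\delta$. Since the source of each piece is the target of the previous one and all junction vertices are special, $\mu=\sum_{j=1}^r\mu_j$, and likewise $\lam=\sum_{j=1}^r\lam_j$ by construction of the type. The crucial point to establish is
$$
\sharp^+\delta=\sum_{j=1}^r\sharp^+\delta_j .
$$
Here lies the real content: each pair $(V_i,E_i)$ occurring in $\delta$ belongs to exactly one piece $\delta_j$, and its positive--crossing count depends only on its local position in $\A$. Translating the piece $\delta_j$ back so that it starts at $\Lo$ amounts to a translation by the coweight $\mu_1+\cdots+\mu_{j-1}$, which preserves the affine hyperplane arrangement $\{\ch_{\alpha,n}\}$ and hence preserves every positive--crossing count. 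Thus the crossings of $\delta$ are the disjoint union of the crossings of the translated pieces, with no loss and no double counting at the (special) junction vertices.

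Granting this, I would conclude by forming the single identity
$$
\langle\lam+\mu,\rho\rangle-\sharp^+\delta=\sum_{j=1}^r\Bigl(\langle\lam_j+\mu_j,\rho\rangle-\sharp^+\delta_j\Bigr),
$$
using linearity of $\langle\,\cdot\,,\rho\rangle$ together with the two additivities above. Each summand on the right is nonnegative by Proposition~\ref{dimensioninequality} applied to the positively folded gallery $\delta_j$ (with dominant coweight $\lam_j$ and target $\mu_j$), while the left--hand side vanishes exactly when $\delta$ is an LS-gallery. Hence the left side is zero if and only if every summand vanishes, i.e.\ if and only if $\sharp^+\delta_j=\langle\lam_j+\mu_j,\rho\rangle$ for all $j$, which is precisely the assertion that each $\delta_j$ is an LS-gallery.

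The main obstacle I anticipate is the additivity of $\sharp^+$: one must check that the partition of the $(V_i,E_i)$--pairs of $\delta$ among the pieces is clean and, in particular, that the integral translation bringing a piece to the origin genuinely fixes the positive/negative labelling of each wall crossing. This rests only on the coweight--translation invariance of the arrangement and on the junction vertices being special (so that concatenation is defined and the shifts are coweights), but it is the one step that needs to be spelled out rather than quoted.
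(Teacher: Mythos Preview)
Your proposal is correct and follows essentially the same approach as the paper: for (1) you observe that a minuscule gallery has a single edge and is therefore minimal, then invoke Remark~\ref{reMinLS}; for (2) you use the additivity $\sharp^+\delta=\sum_j\sharp^+\delta_j$ and $\langle\lam+\mu,\rho\rangle=\sum_j\langle\lam_j+\mu_j,\rho\rangle$ together with the nonnegativity from Proposition~\ref{dimensioninequality} to conclude that the total vanishes iff each term does. Your write-up is in fact more careful than the paper's in justifying the additivity of $\sharp^+$ via translation invariance of the hyperplane arrangement by coweights.
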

\proof
If $\omega$ is a minuscule coweight, then all combinatorial galleries of the same type as $\gamma_\om$ have no
folds and hence are minimal, which proves the claim by Remark \ref{reMinLS}.

Let $\delta=\delta_1*\ldots*\delta_r$ be a concatenation of positively folded galleries as in $(3)$.
If $\delta$ has target $\mu$ and $\delta_i$ has target $\mu_i$, then 
$\sharp^+\delta=\sum_{j=1}^r\sharp^+\delta_j$
and  $\langle\lam+\mu,\rho\rangle=\sum_{j=1}^r \langle\lam_{j}+\mu_j,\rho\rangle$.
So by Proposition~\ref{dimensioninequality} we have equality $\sharp^+\delta=\langle\lam+\mu,\rho\rangle$ 
if and only if $\sharp^+\delta_j=\langle\lam_{j}+\mu_j,\rho\rangle$
for all $j=1,\ldots,r$.
\qed

\medskip
In the following let $\gamma_\lam$ be as in Example~\ref{exGalCoweightarbifreesequence},
we want to characterize the LS-galleries of the same type as $\gamma_\lam$.
The Lemma above reduces the consideration to the case where $\lam=\omega$ is a fundamental weight.

Let $\delta_0=(\Lo=V_0\subset E_0\supset\ldots \subset V_j\subset\ldots\supset V_r=\mu_0)$ be a positively 
folded gallery of the same type as $\gamma_\om$, and let $j$ be such that $\delta_0$ has no folds
at the vertices $V_i$ for $i\geqslant j$ (note: we do not ask $j$ to be minimal with this property).
Let $\beta$ be a positive root and suppose there exists an $m\in\bz$ such that $V_j\in H_{\beta,m}$.
Denote by $\delta$ the gallery
\begin{equation}
\label{fold}
\delta=(\Lo=V_0\subset E_0\supset\ldots \subset V_j\subset s_{\beta,m}(E_j) \supset\ldots\supset s_{\beta,m}(V_r)=\mu)
\end{equation}
Given the one-dimensional face $E_j$
let $\nu_{E_j}$ be the rational weight $V_{j+1}-V_j$. Since $E_j$ is of type $\omega$, there
exists a unique element $\tau_{E_j}\in W/W_\om$ such that the two rays $\br\tau_{E_j}(\om)$ and $\br\nu_{E_j}$
coincide.

\begin{dfn}\rm 
We say that $\delta$ is obtained from $\delta_0$ {\it by a positive fold} if $s_\beta\tau_{E_j}<\tau_{E_j}$
in the Bruhat order on $W/W_\om$. We say that $\delta$ is obtained from $\delta_0$ by an {\it LS--fold} if 
in addition $\ell(s_\beta\tau_{E_j})=\ell(\tau_{E_j})-1$ for the length function $\ell$ on $W/W_\om$
\end{dfn}

By definition, if $\delta$ is obtained from $\delta_0$ by a positive fold, then $\delta$ is also positively folded.
Obviously every positively folded gallery can be obtained from a minimal gallery by a sequence 
of such positive folds. 

To be able to characterize the LS-galleries of the same type as $\gamma_\om$, we divide
this folding algorithm into the smallest possible steps.  Since we can only fold with respect
to the roots in the local root system $\Phi_{V_j}$, we consider first the Weyl group
$W_{V_j}$ of $\Phi_{V_j}$. There exists a unique ray $\br\nu_0$ contained in the dominant Weyl
chamber with respect to $\Phi_{V_j}$ and a unique element $t\in W_{V_j}/(W_{V_j})_{\nu_0}$
such that $t(\nu_0)=\nu_{E_j}$. 

\begin{dfn}\rm
We say that the fold by $s_{\beta,m}$ is {\it minimal for the local root system} $\Phi_{V_j}$
if $\ell(s_{\beta}t)=\ell(t)-1$ for the length function $\ell$ on $W_{V_j}/(W_{V_j})_{\nu_0}$.
\end{dfn}
If the fold is not minimal, then one can find positive roots $\beta_1,\ldots,\beta_q$ in $\Phi_{V_j}$
such that $t>s_{\beta_1}t>\ldots> s_{\beta_q}\cdots s_{\beta_1} t=s_{\beta}t$ in the Bruhat
ordering on $W_{V_j}/(W_{V_j})_{\nu_0}$, and in each step the length decreases by one.
For each root $\beta_i$ let $m_i$ be such that $V_j\in H_{\beta_i,m_i}$, then the sequence
of folds by the affine reflections $s_{\beta_1,m_1},\ldots,s_{\beta_q,m_q}$ are all positive
and, by the choice, minimal. Summarizing we have:
\begin{lemma}
A positively folded gallery of the same type as $\gamma_\om$ is obtained
from a minimal gallery by a sequence of positive folds such that each fold is minimal
 for the local root system associated to the corresponding vertex.
\end{lemma}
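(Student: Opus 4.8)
The plan is to deduce the statement from the two facts recorded just above it: every positively folded gallery of the same type as $\gamma_\om$ is obtained from a minimal one by a finite sequence of positive folds, and a single positive fold which fails to be minimal for the local root system can be split into minimal ones. Thus the whole argument reduces to refining one positive fold; the result then follows by applying the refinement to each fold in the sequence and observing that the intermediate galleries stay positively folded (being obtained from positively folded galleries by positive folds). In the minuscule case there are no folds and the empty sequence already works, so I may assume the fold is nontrivial.

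First I would fix a positive fold at a vertex $V_j$, given by $s_{\beta,m}$ with $\beta\in\Phi_{V_j}$ positive and $V_j\in\ch_{\beta,m}$. Writing $\nu_{E_j}=V_{j+1}-V_j=t(\nu_0)$ with $t\in W_{V_j}/(W_{V_j})_{\nu_0}$, positivity of the fold gives $s_\beta t<t$. If $\ell(s_\beta t)=\ell(t)-1$ the fold is already minimal; otherwise I would invoke the chain property of the Bruhat order on $W_{V_j}/(W_{V_j})_{\nu_0}$ to produce a saturated chain
$$
t>s_{\beta_1}t>s_{\beta_2}s_{\beta_1}t>\cdots>s_{\beta_q}\cdots s_{\beta_1}t=s_\beta t,
$$
in which every covering relation is realized by left multiplication by a reflection $s_{\beta_i}$ with $\beta_i\in\Phi_{V_j}$ positive and the length drops by exactly one. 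Choosing $m_i$ with $V_j\in\ch_{\beta_i,m_i}$, each $s_{\beta_i,m_i}$ is then a positive fold, since $s_{\beta_i}\cdots s_{\beta_1}t<s_{\beta_{i-1}}\cdots s_{\beta_1}t$, and it is minimal for $\Phi_{V_j}$ by the length condition.

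Next I would check, at the level of a plan rather than a computation, that performing $s_{\beta_1,m_1},\ldots,s_{\beta_q,m_q}$ in succession reproduces exactly the gallery $\delta$ obtained from the single fold $s_{\beta,m}$. Each $s_{\beta_i,m_i}$ fixes $V_j$, so the composite fixes $V_j$ and has linear part $s_{\beta_q}\cdots s_{\beta_1}$; because $s_{\beta_q}\cdots s_{\beta_1}t=s_\beta t$ in $W_{V_j}/(W_{V_j})_{\nu_0}$ and $\nu_0$ is fixed by $(W_{V_j})_{\nu_0}$, the new edge points in the same direction $s_\beta(\nu_{E_j})$ as after $s_{\beta,m}$. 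Since $\delta_0$ has no fold beyond $V_j$, the remaining part is the straight continuation determined by this direction, so the two galleries agree.

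The hard part will be the compatibility between the two Bruhat orders in play: the positive-fold condition is phrased in $W/W_\om$ through $\tau_{E_j}$, whereas minimality and the chain decomposition live in the local quotient $W_{V_j}/(W_{V_j})_{\nu_0}$. One must argue that, for $\beta\in\Phi_{V_j}$, the inequality $s_\beta\tau_{E_j}<\tau_{E_j}$ in $W/W_\om$ is equivalent to $s_\beta t<t$ locally; this is precisely the point where the purely local nature of a fold at $V_j$ must be exploited, and it is what guarantees that the locally minimal folds constructed above are the positive folds demanded by the statement.
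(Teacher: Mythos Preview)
Your approach is exactly the paper's: the lemma is stated as a summary of the discussion immediately preceding it, which refines a single positive fold at $V_j$ via a saturated chain in $W_{V_j}/(W_{V_j})_{\nu_0}$, just as you do. The point you call ``the hard part'' is in fact easy. For a positive root $\beta$ and a coset $\sigma$ in any parabolic quotient $W'/W'_\mu$ (with $\mu$ dominant for $W'$), one has $s_\beta\sigma<\sigma$ if and only if $\langle\beta,\sigma(\mu)\rangle<0$. Since $\nu_{E_j}$ is a positive multiple of both $\tau_{E_j}(\om)$ and $t(\nu_0)$, the two Bruhat conditions reduce to the common inequality $\langle\beta,\nu_{E_j}\rangle<0$ and are therefore equivalent; the same criterion applied at each intermediate direction $s_{\beta_{i-1}}\cdots s_{\beta_1}(\nu_{E_j})$ confirms that every locally minimal fold in your chain is a positive fold.
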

We want to compare $\sharp^+\delta$ and $\sharp^+\delta_0$, where $\delta$ is obtained
from $\delta_0$ by a fold as in (\ref{fold}), but now assume that the positive fold is minimal.
\begin{proposition} $\sharp^+\delta \le \sharp^+\delta_0+\langle \mu-\mu_0,\rho\rangle$.
Further, $\delta$ is an LS-gallery if and only if $\delta_0$ is an LS-gallery and
the new fold is an LS-fold.
\end{proposition}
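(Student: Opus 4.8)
The plan is to prove both assertions by a \emph{local} analysis of the single fold at $V_j$, comparing $\delta$ and $\delta_0$ crossing by crossing. Since $\delta$ and $\delta_0$ coincide on the initial segment $(V_0\subset\cdots\supset V_j)$ and differ only in the reflected tail, and since $\sharp^+$ is a sum of local contributions at the successive vertices (the lemma $\sharp^+\delta=\dim C_\delta$ together with Proposition~\ref{pr:Cell} and Remark~\ref{crossingroot}, which identify the positive crossings at $V_i$ with $\Phi_-^{\mathfrak a}(V_i,E_i)$), the initial segment cancels and I only have to control $\sharp^+_{\geq j}(\delta)-\sharp^+_{\geq j}(\delta_0)$. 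First I would record, using $V_j\in \ch_{\beta,m}$, that $\mu=s_{\beta,m}(\mu_0)=\mu_0-\langle\beta,\mu_0-V_j\rangle\,\beta^\vee$, so that $\langle\mu_0-\mu,\rho\rangle=\langle\beta,\mu_0-V_j\rangle\,\langle\rho,\beta^\vee\rangle$.

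The key structural input is that $s_{\beta,m}$ is an isometry of $\A$ with linear part $s_\beta$: it sends a wall $\ch_{\alpha,n}$ through a tail vertex to a wall whose root is $s_\beta(\alpha)$, preserving the geometric side but possibly swapping the algebraic sign of the crossing. A crossing recorded along the tail survives as a positive crossing of $\delta$ exactly when $s_\beta(\alpha)>0$, and flips (positive $\leftrightarrow$ negative) exactly for $\alpha$ in the inversion set $\{\alpha>0: s_\beta\alpha<0\}$. Thus $\sharp^+\delta_0-\sharp^+\delta$ equals the signed count, over the tail, of crossings by roots in this inversion set. On the other hand, the identity $\rho-s_\beta\rho=\langle\rho,\beta^\vee\rangle\,\beta$ gives $\sum_{\alpha>0,\,s_\beta\alpha<0}\alpha=\langle\rho,\beta^\vee\rangle\,\beta$, whence $\langle\mu_0-\mu,\rho\rangle=\sum_{\alpha>0,\,s_\beta\alpha<0}\langle\alpha,\mu_0-V_j\rangle$, a sum over the \emph{same} inversion set. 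So the inequality $\sharp^+\delta\le\sharp^+\delta_0+\langle\mu-\mu_0,\rho\rangle$ reduces to showing that, for the roots in this inversion set, the signed number of crossings registered along the (fold-free) tail dominates the net displacement $\langle\alpha,\mu_0-V_j\rangle$.

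I expect this last comparison to be the main obstacle, because the crossing count is recorded locally (one wall through each vertex, via the local root system $\Phi_{V_i}$), whereas $\langle\alpha,\mu_0-V_j\rangle$ is the globally telescoped displacement; the gap between the two is precisely what the hypothesis that the fold is minimal for $\Phi_{V_j}$, namely $\ell(s_\beta t)=\ell(t)-1$, controls. I would handle it by reading $\sharp^+$ at each vertex as the length $\ell\big(w(C^-_{V_i},D_i)\big)$ in the residue Weyl group (the description of $\Phi_-^{\mathfrak a}(V,F)$ as the walls crossed by a minimal residue gallery between $C^-_V$ and $D$), so that reflecting the tail shifts the base antidominant chamber $C^-_{V_i}$ and the change of length is governed by a Bruhat-order subadditivity estimate. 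The local covering relation furnishes the sharp bound, and the surplus producing a strict inequality appears exactly when the local step $s_\beta t\lessdot t$ fails to be a covering relation for the \emph{global} quotient $W/W_\om$, i.e. when $\ell(s_\beta\tau_{E_j})<\ell(\tau_{E_j})-1$.

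Finally I would deduce the LS-statement by rewriting the inequality in terms of the defect $d(\eta):=\langle\rho,\lam+\mathrm{target}(\eta)\rangle-\sharp^+\eta$, which is $\ge 0$ and vanishes iff $\eta$ is an LS-gallery (Proposition~\ref{dimensioninequality}). Since $\langle\mu-\mu_0,\rho\rangle=\langle\rho,\lam+\mu\rangle-\langle\rho,\lam+\mu_0\rangle$, the inequality is equivalent to $d(\delta)\ge d(\delta_0)$, with equality precisely when the crossing comparison above is an equality, that is, when the fold is an LS-fold. Hence $d(\delta)=0$ forces $d(\delta_0)=0$, giving $\delta$ LS $\Rightarrow$ $\delta_0$ LS; conversely, when $\delta_0$ is LS, $\delta$ is LS iff the defect increase $d(\delta)-d(\delta_0)$ vanishes, which by the equality analysis happens iff the new fold is an LS-fold. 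This yields the asserted equivalence.
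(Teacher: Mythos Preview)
Your reduction to the inversion set $B=\{\alpha>0:s_\beta\alpha<0\}$ and the identity $\sum_{\alpha\in B}\alpha=\langle\rho,\beta^\vee\rangle\beta$ are correct and match the paper's decomposition $\Phi^+=A\cup B$; likewise the defect reformulation $d(\eta)=\langle\rho,\lam+\text{target}(\eta)\rangle-\sharp^+\eta$ at the end is exactly how the LS-equivalence follows once the inequality (with its equality case) is established. So the architecture is right.

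The gap is precisely where you flag the ``main obstacle'': you never actually prove the crossing inequality, and the mechanism you propose --- reading $\sharp^+$ at each tail vertex as a residue-Weyl-group length and invoking a ``Bruhat-order subadditivity estimate'' --- is not how the argument goes and is unlikely to close. The minimality hypothesis $\ell(s_\beta t)=\ell(t)-1$ lives only at the single vertex $V_j$, whereas your plan would need length control at every tail vertex $V_k$, $k\ge j$; there is no evident way to propagate the local covering relation along the tail via subadditivity alone.

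What the paper does instead is a direct count. Since the tail of $\delta_0$ is a straight segment from $V_j$ to $\mu_0=V_j+r\tau(\omega)$ with $\mu_0$ special, the number of positive $\gamma$-wall crossings along it equals $r\langle\tau(\omega),\gamma\rangle$ when $\gamma\in\Phi_{V_j}$ and $\lfloor r\langle\tau(\omega),\gamma\rangle\rfloor$ when $\gamma\notin\Phi_{V_j}$; the same formulas hold for $\delta$ with $\tau$ replaced by $\kappa=s_\beta\tau$. One then invokes a purely root-theoretic lemma: writing $B_\sigma^+=\{\gamma\in B:\langle\sigma(\omega),\gamma\rangle\ge0\}$, one has $B_\tau^+\cup\{\beta\}\subset B_\kappa^+$, with equality precisely when $\ell(s_\beta\tau)=\ell(\tau)-1$ in $W/W_\omega$. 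After cancellation over $A$ and over $B_\tau^+\cup\{\beta\}$ (using that $r\langle\kappa(\omega),\beta\rangle\in\bz$), the difference $(\langle\mu-\mu_0,\rho\rangle)-(\sharp^+\delta-\sharp^+\delta_0)$ becomes $\sum_{\gamma\in\text{Rest}}\bigl(r\langle\kappa(\omega),\gamma\rangle-\lfloor r\langle\kappa(\omega),\gamma\rangle\rfloor\bigr)\ge0$, where $\text{Rest}=B_\kappa^+\setminus(B_\tau^+\cup\{\beta\})$. The local minimality guarantees $\text{Rest}\cap\Phi_{V_j}=\emptyset$, so every summand is a genuine fractional part and vanishes iff $\text{Rest}=\emptyset$, which is exactly the LS-fold condition. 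This floor-function mechanism --- not a length estimate at each residue --- is the missing idea in your sketch.
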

Since the condition of being folded by a sequence of LS-folds is equivalent to the condition for LS-paths,
we get as an immediate consequence:
\begin{corollary}\label{pathgalleryfundamental}
For a fundamental coweight $\om$ let $\pi_\om:[0,1]\rightarrow X^\vee_\br$ be the path $t\mapsto t\om$
and let $\pi$ be an LS-path of shape $\om$ as in \cite{L1}. As associated gallery $\gamma_\pi$ in the one-skeleton of $\A$
take the sequence of edges and vertices lying on the path. This map $\pi\mapsto \gamma_\pi$ describes a bijection 
between the LS-paths of shape $\om$ and the LS-galleries of the same type as $\gamma_\om$.
\end{corollary}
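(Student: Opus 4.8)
The plan is to use that both sides are produced from a single ``straight'' object by iterated LS-folds, and that the assignment $\pi\mapsto\gamma_\pi$ transports one folding structure to the other. On the gallery side, combining the folding algorithm (the Lemma preceding the corollary) with the preceding Proposition gives, by induction on the number of folds, the characterization: a combinatorial gallery of the same type as $\gamma_\om$ is an LS-gallery if and only if it is obtained from the minimal gallery $\gamma_\om$ by a sequence of LS-folds, i.e.\ folds with $\ell(s_\beta\tau_{E_j})=\ell(\tau_{E_j})-1$ in $W/W_\om$. On the path side, I would recall from \cite{L1} the analogous fact: an LS-path of shape $\om$ is exactly a path obtained from the straight path $\pi_\om$ by a sequence of tail-reflections, each performed at a breakpoint $\pi(a)$ along an affine reflection $s_{\beta,m}$ satisfying the chain condition, namely a covering relation in $W/W_\om$ together with the integrality $a\langle\tau(\om),\beta^\vee\rangle\in\bz$.

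First I would verify that $\pi\mapsto\gamma_\pi$ is well defined with image among positively folded galleries of type $\gamma_\om$. An LS-path is piecewise linear, its $j$-th segment points in a direction $\tau_j(\om)$ with $\tau_1>\ldots>\tau_s$ in $W/W_\om$, and its breakpoints lie at vertices of the one-skeleton of $\A$ by the integrality built into the LS-path definition; hence the edges and vertices lying on $\pi$ form a one-skeleton gallery, each maximal collinear run of edges being of weight type $\om$ and weight class $\tau_j$ in the terminology of the proof of Proposition~\ref{prLocMiniEquivcomb}. In particular the straight path $\pi_\om$ is sent to the minimal gallery $\gamma_\om$.

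The heart of the argument is the folding dictionary. Reflecting the tail $\pi|_{[a,1]}$ of the path at $t=a$ along $s_{\beta,m}$ corresponds exactly to folding $\gamma_\pi$ at the vertex $V=\pi(a)\in\ch_{\beta,m}$ along the same $s_{\beta,m}$, in the sense of (\ref{fold}). Under this correspondence the covering/chain condition for the path becomes precisely the LS-fold condition $\ell(s_\beta\tau_E)=\ell(\tau_E)-1$ for the gallery, while the integrality condition of the path model holds automatically on the gallery side because each fold is applied at a genuine vertex. Consequently a path arises from $\pi_\om$ by LS-folds if and only if its image arises from $\gamma_\om$ by LS-folds; by the two characterizations above this says exactly that $\pi$ is an LS-path if and only if $\gamma_\pi$ is an LS-gallery.

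It then remains to establish bijectivity. For injectivity one recovers from $\gamma_\pi$ the directions $\tau_j(\om)$ of its maximal collinear runs of edges and the vertices at which the direction changes, hence the data $(\underline\tau,\underline a)$ determining $\pi$. For surjectivity onto LS-galleries, given an LS-gallery $\delta$ the gallery characterization exhibits a sequence of LS-folds carrying $\gamma_\om$ to $\delta$; transporting this sequence through the dictionary back to $\pi_\om$ produces an LS-path $\pi$ with $\gamma_\pi=\delta$. I expect the genuine obstacle to be the folding dictionary itself: one must reconcile the fact that gallery folds are controlled by the \emph{local} root system $\Phi_V$ at the turning vertex (the ``minimal for the local root system'' condition) with the \emph{global} covering condition in $W/W_\om$ demanded by the path model, and check that the tail-reflection of the path agrees with the gallery fold edge by edge, including on the edges interior to a single displaced fundamental gallery $\gamma_{\tau_j(\om)}$. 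Once this dictionary is in place, the equivalence quoted just before the corollary makes the bijection immediate.
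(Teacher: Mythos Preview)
Your proposal is correct and follows essentially the same approach as the paper. The paper's justification is a single sentence---``Since the condition of being folded by a sequence of LS-folds is equivalent to the condition for LS-paths, we get as an immediate consequence''---which is precisely your folding dictionary; you have simply unpacked in detail what the paper asserts without further argument, including the well-definedness of $\pi\mapsto\gamma_\pi$, the transport of LS-folds, and the recovery of the LS-path data from the gallery.
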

\vskip 1pt
\noindent
{\it Proof of the proposition.} For  $\delta_0=(\Lo=V_0\subset E_0\supset\ldots\supset V_r=\mu_0)$ 
let $j$ be such that $\delta=(\Lo=V_0\subset E_0\supset\ldots \subset V_j\subset s_{\beta,m}(E_j) 
\supset\ldots\supset s_{\beta,m}(V_r)=\mu)$. Denote by $\sharp_j^+\delta_0$ the number of positive 
crossings associated to the vertices $V_k$ for $k\geqslant j$.
Since the two galleries coincide till $V_j$, we have $\sharp^+\delta-\sharp^+\delta_0=\sharp_j^+\delta-\sharp_j^+\delta_0$.

Let $\nu_0$ be the rational weight $\mu_0-V_j$ and set $\nu= \mu-V_j$. There exists a rational number
$0<r\le 1$ and elements $\kappa,\tau\in W/W_\om$ such that $\nu_0=r\tau(\om)$, $\nu=r\kappa(\om)$, 
$r\langle\kappa(\om),\beta\rangle\in \bz$, and $s_\beta\tau=\kappa$. 
Note that
$$
\begin{array}{rcl}
\langle \om + \mu,\rho\rangle -\langle \om + \mu_0,\rho\rangle&=&\langle \mu-\mu_0,\rho\rangle    \\
      &=&\langle r(\kappa(\om)-\tau(\om)),\rho\rangle \\
      &=&\frac{r}{2}(\sum_{\gamma>0}\langle\kappa(\om),\gamma\rangle-\sum_{\gamma>0}\langle\tau(\om),\gamma\rangle).
\end{array}
$$
We need the following simple lemma, which we state without proof.
\begin{lemma}\label{simplelemma}
Let $\psi$ be a root system with Weyl group $W(\Psi)$
and let $\nu$ be a dominant weight. Fix $s_\beta\tau\in W(\Psi)/W(\Psi)_\nu$
and let $\beta$ be a positive root such that $s_\beta\tau<\tau$. 
We divide the set of positive roots into $\Psi^+=A\cup B$, where
$A=\{\gamma>0\mid s_\beta(\gamma)>0\}$ and $B=\{\gamma>0\mid s_\beta(\gamma)<0\}$.
Consider the following sets:
$$
A_{\tau}^+=\{\gamma\in A\mid \langle \tau(\nu), \gamma\rangle\ge 0\}\quad
A_{\tau}^0=\{\gamma\in A\mid ,\langle \tau(\nu), \gamma\rangle =0\}
$$
$$
B_{\tau}^+=\{\gamma\in B\mid\langle \tau(\nu), \gamma\rangle\ge 0\} \quad
B_{\tau}^0=\{\gamma\in B\mid \langle \tau(\nu), \gamma\rangle =0\},
$$
and similarly we define the sets $A_{\tau}^-$ and $B_{\tau}^-$.

Then $s_\beta(A_{\tau}^\pm)=A_{s_\beta\tau}^\pm$, $s_\beta(A_{\tau}^0)=A_{s_\beta\tau}^0$,
$- s_\beta(B_{\tau}^+)=B_{s_\beta\tau}^-$, $- s_\beta(B_{\tau}^-)=B_{s_\beta\tau}^+$ and
 $- s_\beta(B_{\tau}^0)=B_{s_\beta\tau}^0$.
Further, $B_{\tau}^+\cup\{\beta\}\subset B_{s_\beta \tau}^+$, and one has equality if and only if
$\ell(\tau)=\ell(s_\beta \tau)+1$ for the length function $\ell$ on $W(\Psi)/W(\Psi)_\nu$.
\end{lemma}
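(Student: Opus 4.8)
The plan is to reduce the whole statement to two elementary facts about the reflection $s_\beta$, together with one standard length formula for the quotient $W(\Psi)/W(\Psi)_\nu$. The first fact is that $s_\beta$ is a $W(\Psi)$-invariant isometry with $s_\beta^2=\mathrm{id}$, so that $\langle(s_\beta\tau)(\nu),\gamma\rangle=\langle\tau(\nu),s_\beta(\gamma)\rangle$ for every root $\gamma$. The second is that $s_\beta$ is compatible with the partition $\Psi^+=A\cup B$: it restricts to an involutive bijection of $A$ onto $A$ (if $\gamma$ and $s_\beta\gamma$ are both positive, applying $s_\beta$ again preserves this), while $\gamma\mapsto-s_\beta(\gamma)$ restricts to an involutive bijection of $B$ onto $B$ (if $\gamma>0$ and $s_\beta\gamma<0$, then $-s_\beta\gamma>0$ and $s_\beta(-s_\beta\gamma)=-\gamma<0$). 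Note also that $\beta\in B$, since $s_\beta\beta=-\beta$.

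With this in hand the identities for $A$ and $B$ become bookkeeping. For $\gamma\in A$ the isometry relation gives $\langle(s_\beta\tau)(\nu),\gamma\rangle=\langle\tau(\nu),s_\beta\gamma\rangle$ with $s_\beta\gamma\in A$, so $\gamma\in A^{\pm}_{s_\beta\tau}$ (resp.\ $A^0_{s_\beta\tau}$) precisely when $s_\beta\gamma\in A^{\pm}_\tau$ (resp.\ $A^0_\tau$); applying $s_\beta$ once more yields $s_\beta(A^{\pm}_\tau)=A^{\pm}_{s_\beta\tau}$ and $s_\beta(A^0_\tau)=A^0_{s_\beta\tau}$. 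For $\gamma\in B$ the sign flip enters: writing $\delta=-s_\beta\gamma\in B$, the same computation gives $\langle(s_\beta\tau)(\nu),\delta\rangle=-\langle\tau(\nu),\gamma\rangle$, which directly produces $-s_\beta(B^+_\tau)=B^-_{s_\beta\tau}$, $-s_\beta(B^-_\tau)=B^+_{s_\beta\tau}$ and $-s_\beta(B^0_\tau)=B^0_{s_\beta\tau}$.

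The substance is the last assertion. First I would record the length formula $\ell(\tau)=\#\{\gamma>0\mid\langle\tau(\nu),\gamma\rangle<0\}$ for the minimal-length representative, proved via $\langle\tau(\nu),\gamma\rangle=\langle\nu,\tau^{-1}\gamma\rangle$: a minimal representative sends no positive root orthogonal to $\nu$ to a negative one, so the positive roots made negative by $\tau^{-1}$ are exactly those with $\langle\tau(\nu),\gamma\rangle<0$. Next, the hypothesis $s_\beta\tau<\tau$ forces $\langle\tau(\nu),\beta\rangle<0$: comparing the weights $\tau(\nu)$ and $(s_\beta\tau)(\nu)=\tau(\nu)-\langle\tau(\nu),\beta^\vee\rangle\beta$, the Bruhat drop means $(s_\beta\tau)(\nu)$ is strictly higher, i.e.\ $\langle\tau(\nu),\beta^\vee\rangle<0$, and strictness shows $\beta\notin B^0_\tau$. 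The inclusion $B^+_\tau\cup\{\beta\}\subseteq B^+_{s_\beta\tau}$ then follows from $s_\beta\gamma=\gamma-\langle\gamma,\beta^\vee\rangle\beta$: for $\gamma\in B$ one has $\langle\gamma,\beta^\vee\rangle>0$, so $\langle(s_\beta\tau)(\nu),\gamma\rangle=\langle\tau(\nu),\gamma\rangle-\langle\gamma,\beta^\vee\rangle\langle\tau(\nu),\beta\rangle\geq 0$ whenever $\gamma\in B^+_\tau$, and $\beta$ itself lands in $B^+_{s_\beta\tau}$ because $\langle(s_\beta\tau)(\nu),\beta\rangle=-\langle\tau(\nu),\beta\rangle>0$.

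For the equivalence with $\ell(\tau)=\ell(s_\beta\tau)+1$ I would compare cardinalities. Splitting the length formula over $A$ and $B$, the bijection $s_\beta\colon A\to A$ shows the $A$-contributions to $\ell(\tau)$ and $\ell(s_\beta\tau)$ coincide, so the difference is carried by $B$. Writing $B$ as the disjoint union of the roots where $\langle\tau(\nu),\cdot\rangle$ is negative, zero, positive, with counts $p,z,m$, the $(-s_\beta)$-bijection gives that the $B$-part of $\ell(s_\beta\tau)$ equals $m$, whence $\ell(\tau)-\ell(s_\beta\tau)=p-m$. On the other hand $|B^+_\tau\cup\{\beta\}|=(z+m)+1$ while $|B^+_{s_\beta\tau}|=p+z$ (again via the bijection), so, the inclusion being already established, the two sets are equal iff $p-m=1$, i.e.\ iff $\ell(\tau)=\ell(s_\beta\tau)+1$. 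The \emph{main obstacle} is entirely the quotient length formula: one must check that the roots orthogonal to $\nu$ (the root system of $W(\Psi)_\nu$) do not interfere, which is exactly what the minimal-representative property secures; once that is pinned down, everything else is sign-tracking through the two bijections.
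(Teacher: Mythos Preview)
Your proof is correct. The paper actually states this lemma \emph{without proof} (``We need the following simple lemma, which we state without proof''), so there is no argument to compare against; you have supplied exactly what the authors omitted.

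The structure you chose is the natural one: the set-theoretic identities follow immediately from the two involutions $s_\beta\colon A\to A$ and $-s_\beta\colon B\to B$ together with $W$-invariance of the pairing, and the real content is the last clause. Your route there via the parabolic length formula $\ell(\tau)=\#\{\gamma>0\mid\langle\tau(\nu),\gamma\rangle<0\}$ and the cardinality count $\ell(\tau)-\ell(s_\beta\tau)=p-m$, $|B^+_{s_\beta\tau}|-|B^+_\tau\cup\{\beta\}|=(p+z)-(z+m+1)$ is clean and makes the equivalence transparent. Two minor remarks: first, your deduction of $\langle\tau(\nu),\beta\rangle<0$ from $s_\beta\tau<\tau$ appeals to the order-reversing bijection between Bruhat order on $W/W_\nu$ and dominance order on $W\nu$; this is standard, but if you want to stay entirely self-contained you can instead argue directly with the minimal representative: $\langle\tau(\nu),\beta\rangle=\langle\nu,\tau^{-1}\beta\rangle$, and $\tau^{-1}\beta<0$ (else $s_\beta\tau>\tau$ in $W$ already), while equality with zero would force $-\tau^{-1}\beta\in\Psi_\nu^+$ and hence $-\beta=\tau(-\tau^{-1}\beta)>0$ by minimality of $\tau$, a contradiction. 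Second, the fact $\langle\gamma,\beta^\vee\rangle>0$ for $\gamma\in B$ that you use in the inclusion step is worth one line of justification (if $\langle\gamma,\beta^\vee\rangle\le 0$ then $s_\beta\gamma=\gamma-\langle\gamma,\beta^\vee\rangle\beta$ is a nonnegative combination of positive roots, hence positive).
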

\vskip 5pt\noindent
Using the notation and the results of Lemma~\ref{simplelemma}, this sums reduces to 
$$
\langle \om + \mu,\rho\rangle -\langle \om + \mu_0,\rho\rangle=
r(\sum_{\gamma\in B_\kappa^+}\langle\kappa(\om),\gamma\rangle-\sum_{\gamma\in B_\tau^+}\langle\tau(\om),\gamma\rangle),
$$
since $\langle \kappa(\om),\gamma\rangle=\langle s_{\beta}\kappa(\om),s_{\beta}(\gamma)\rangle=\langle \tau(\om),s_{\beta}(\gamma)\rangle$.
Again by Lemma~\ref{simplelemma}, we can divide $B_\kappa^+$ into $B_\tau^+\cup\{\beta\}\cup \hbox{\it Rest}$ and get:
$$
\begin{array}{rcl}
\langle \om + \mu,\rho\rangle -\langle \om + \mu_0,\rho\rangle &=&
r(\sum_{\gamma\in B_\tau^+}\langle\kappa(\om)-\tau(\om),\gamma\rangle
+\langle\kappa(\om),\beta\rangle \\
&& \hskip 20pt \hfill +\sum_{\gamma\in \hbox{\it Rest}} \langle\kappa(\om),\gamma\rangle)\\
&=&\sum_{\gamma\in B_\tau^+}r\langle\kappa(\om),\beta\rangle\langle\beta,\gamma\rangle +
r\langle\kappa(\om),\beta\rangle\\
&& \hskip 20pt \hfill +\sum_{\gamma\in \hbox{\it Rest}} r\langle\kappa(\om),\gamma\rangle.
\end{array}
$$
We want to compare this sum to $\sharp_j^+\delta-\sharp_j^+\delta_0$.
If $\gamma$ is a positive root, then $(V_k,E_k)$ crosses some wall 
$H_{\gamma,p}$ positively for some $k\geqslant j$ only if $\langle\tau(\om),\gamma\rangle>0$, and if
$\gamma\in\Phi_{V_j}$, then the number of such crossings is $r\langle\tau(\om),\gamma\rangle$.
If $\gamma\not\in\Phi_{V_j}$, then the number of such crossings is $\lfloor r\langle\tau(\om),\gamma\rangle\rfloor$,
the largest integer smaller or equal to $r\langle\tau(\om),\gamma\rangle$. So again with the notation as in Lemma~\ref{simplelemma}
and the decomposition $B_\kappa^+=B_\tau^+\cup\{\beta\}\cup \hbox{\it Rest}$:
$$
\begin{array}{rcl}
\sharp_j^+\delta-\sharp_j^+\delta_0&=&\sum_{\gamma\in A_\kappa^+\cup B_\kappa^+} \lfloor r\langle\kappa(\om),\gamma\rangle\rfloor
- \sum_{\gamma\in A_\tau^+\cup B_ \tau ^+} \lfloor r\langle \tau(\om),\gamma\rangle\rfloor\\
&=&\sum_{\gamma\in B_\kappa^+} \lfloor r\langle\kappa(\om),\gamma\rangle\rfloor
- \sum_{\gamma\in B_ \tau ^+} \lfloor r\langle \tau(\om),\gamma\rangle\rfloor\\
&=&\sum_{\gamma\in B_\tau^+} (\lfloor r\langle\kappa(\om),\gamma\rangle\rfloor -\lfloor r\langle \tau(\om),\gamma\rangle\rfloor)
+ \lfloor r\langle \kappa(\om),\beta\rangle\rfloor\\
&&\hskip 20pt \hfill +\sum_{\gamma\in \hbox{\it Rest}} \lfloor r\langle \kappa(\om),\gamma\rangle\rfloor\\
&=&\sum_{\gamma\in B_\tau^+} (\lfloor r\langle\kappa(\om),\gamma\rangle\rfloor-
\lfloor r\langle \kappa(\om),\gamma\rangle-r\langle\kappa(\om),\beta\rangle\langle\beta,\gamma\rangle \rfloor)\\
&&\hskip 20pt \hfill + \lfloor r\langle \kappa(\om),\beta\rangle\rfloor+
\sum_{\gamma\in \hbox{\it Rest}} \lfloor r\langle \kappa(\om),\gamma\rangle\rfloor\\
\end{array}
$$
Since $r\langle \kappa(\om),\gamma\rangle$ is an integer by assumption, we obtain:
$$
\sharp_j^+\delta-\sharp_j^+\delta_0=\sum_{\gamma\in B_\tau^+} r\langle\kappa(\om),\beta\rangle\langle\beta,\gamma\rangle
+ r\langle \kappa(\om),\beta\rangle+\sum_{\gamma\in \hbox{\it Rest}} \lfloor r\langle \kappa(\om),\gamma\rangle\rfloor
$$
As a consequence we see:
$$
\begin{array}{rcl}
(\langle \mu-\mu_0,\rho\rangle)-(\sharp^+\delta-\sharp^+\delta_0)&=&(\langle \om+\mu,\rho\rangle-\langle \om+\mu_0,\rho\rangle)
-(\sharp^+_j\delta-\sharp_j^+\delta_0)\\
&=&\sum_{\gamma\in \hbox{\it Rest}} (r\langle \kappa(\om),\gamma\rangle-\lfloor r\langle \kappa(\om),\gamma\rangle\rfloor),
\end{array}
$$
which proves the inequality in the proposition. We have equality if and only if the right hand term above is zero.
The target $\mu$ is a special point, so $r\langle \kappa(\om),\gamma\rangle$ is an integer if an only if
$\gamma\in \Phi_{V_j}$. Since the folding is minimal by assumption, the intersection $\hbox{\it Rest}\cap\Phi_{V_j}=\emptyset$.
But this implies that we have equality if and only if $\hbox{\it Rest}=\emptyset$, i.e., the fold is an LS-fold by 
Lemma~\ref{simplelemma}.
In particular, $\delta$ is an LS-gallery if and only if $\delta_0$ is an LS-gallery and the new fold is an LS-fold.
\qed
 
\subsection{Connection with the path model}
 
Summarizing the results above, we have the following connection between the path model of a representation 
and the one-skeleton galleries:

\begin{corollary}\label{gallerpathcoro}
Write a dominant coweight $\lam=\om_{i_1}+\ldots+\om_{i_r}$ as a sum of fundamental coweights,
write $\underline{\lam}$ for this ordered decomposition. Let ${\mathcal P}_{\underline \lam}$ be the associated
path model of LS-paths of shape $\underline{\lam}$ defined in \cite{L1}. The associated one-skeleton galleries 
(same procedure as in Corollary~\ref{pathgalleryfundamental}) are precisely the LS-galleries of the same type as 
$\gamma_{\om_{i_1}}*\ldots*\gamma_{\om_{i_r}}$. 
\end{corollary}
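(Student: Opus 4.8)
The plan is to reduce the statement to the fundamental case already settled in Corollary~\ref{pathgalleryfundamental}, exploiting the fact that both sides of the claimed correspondence are built by concatenation and that the combinatorial datum governing the concatenation --- the defining chain --- is literally the same object on the path side and on the gallery side. So the proof will have three ingredients: the compatibility of $\pi\mapsto\gamma_\pi$ with concatenation, the fundamental bijection, and the glueing criterion provided by Lemma~\ref{minusculegallery}(2).

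First I would record that $\pi\mapsto\gamma_\pi$ is compatible with concatenation: if $\pi=\pi_1*\cdots*\pi_r$ with each $\pi_j$ an LS-path of shape $\om_{i_j}$, then $\gamma_\pi=\gamma_{\pi_1}*\cdots*\gamma_{\pi_r}$, since the edges and vertices lying on $\pi$ are exactly those lying on the successive pieces $\pi_j$. Here one uses that the junctions $\pi_j(1)=\pi_{j+1}(0)$ are coweights, hence special vertices for $G$ of adjoint type, which is precisely what makes the concatenation of galleries legitimate at those points. By the construction of the path model ${\mathcal P}_{\underline\lam}$ in \cite{L1}, every $\pi\in{\mathcal P}_{\underline\lam}$ is obtained from $\pi_{\om_{i_1}}*\cdots*\pi_{\om_{i_r}}$ by root operators and so decomposes as such a concatenation of LS-paths $\pi_j$ of fundamental shape $\om_{i_j}$; applying Corollary~\ref{pathgalleryfundamental} to each factor shows that every $\gamma_{\pi_j}$ is an LS-gallery of the same type as $\gamma_{\om_{i_j}}$.

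The heart of the argument is to identify the two notions of defining chain. On the path side, membership of $\pi=\pi_1*\cdots*\pi_r$ in ${\mathcal P}_{\underline\lam}$ is characterised in \cite{L1} by the existence of a defining chain, i.e.\ a weakly decreasing sequence of Weyl-group cosets matching, at each junction $V_j$, the terminal direction of $\pi_j$ with the initial direction of $\pi_{j+1}$ (the last Weyl element of the chain of $\pi_j$ being $\ge$ the first Weyl element of the chain of $\pi_{j+1}$ in the Bruhat order). On the gallery side this is verbatim condition (ii) of Definition~\ref{dfnPosFold1}, namely the existence of a chain of sectors $\underline\sfrak(\gamma)=(\sfrak_0,\ldots,\sfrak_r)$ with $\underline\sfrak_0\ge\cdots\ge\underline\sfrak_r$, that is, a defining chain $\df(\gamma)=(\tau_0,\ldots,\tau_r)$. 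I would check that under $\pi\mapsto\gamma_\pi$ the direction of the path along each edge translates into the weight class $\kappa_i\in W/W_{\om_{E_i}}$ attached to that edge, so that a defining chain for $\pi$ is precisely a defining chain for $\gamma_\pi$ and conversely. Granting this, $\gamma_\pi$ is globally positively folded, and Lemma~\ref{minusculegallery}(2) yields that $\gamma_\pi$ is an LS-gallery if and only if each $\gamma_{\pi_j}$ is, which we have already established; hence $\gamma_\pi$ is an LS-gallery of the prescribed type.

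For the converse I would start from an arbitrary LS-gallery $\gamma$ of the same type as $\gamma_{\om_{i_1}}*\cdots*\gamma_{\om_{i_r}}$, decompose it uniquely as $\gamma=\gamma_1*\cdots*\gamma_r$ with $\gamma_j$ of type $\gamma_{\om_{i_j}}$, and apply Lemma~\ref{minusculegallery}(2) in the other direction to see that each $\gamma_j$ is an LS-gallery; Corollary~\ref{pathgalleryfundamental} then produces LS-paths $\pi_j$ with $\gamma_{\pi_j}=\gamma_j$, and the defining chain of $\gamma$ supplies exactly the junction compatibilities needed to conclude that $\pi=\pi_1*\cdots*\pi_r$ lies in ${\mathcal P}_{\underline\lam}$. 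Injectivity is inherited from the fundamental bijection together with the uniqueness of this decomposition. I expect the main obstacle to be precisely the defining-chain identification of the previous paragraph: one must be careful that the Bruhat-order conditions of the Littelmann model (phrased for cosets in $W/W_\om$, with the subtlety that the defining chain need not be unique) match the sector-chain order of Definition~\ref{dfnPosFold1}, and that the decomposition of a fold into minimal folds for the local root systems $\Phi_{V_j}$ used in the LS-fold analysis is compatible with the junction data read off from the path.
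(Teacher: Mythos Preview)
Your proposal is correct and follows essentially the same route as the paper: the corollary is stated there as a summary of the preceding results, relying on Corollary~\ref{pathgalleryfundamental} for each fundamental factor, Lemma~\ref{minusculegallery}(2) for the concatenation, and the observation (made explicit right after the statement) that the Lakshmibai--Musili--Seshadri defining chain for LS-paths coincides with the defining chain of Definition~\ref{dfnPosFold1} for the associated gallery. Your write-up simply spells out these three ingredients and the two directions of the bijection in more detail than the paper does.
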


In fact, the notion of a {\it defining chain for LS-paths}
introduced by Lakshmibai, Musili and Seshadri coincides in this case
with the notion of a defining chain for the associated gallery.
As an immediate consequence of Theorem~\ref{Lpolynomialformula} and Proposition~\ref{dimensioninequality}
we get the following character formula. In combination with Corollary~\ref{gallerpathcoro}, this provides a 
geometric proof of the 
path character formula, first conjectured by Lakshmibai 
(see for example \cite{LS2}) and proved in \cite{L1}:

\begin{corollary}\label{characterformula}
$\chara V(\lam) = \sum_{\delta} e^{target(\delta)}$, where the sum runs over all
LS-galleries of the same type as $\gamma_\lam$. 
\end{corollary}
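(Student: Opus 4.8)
The plan is to read the character formula off the $q\to\infty$ limit of the coefficients of the Hall--Littlewood polynomial. Recall that $P_\lam\to s_\lam=\chara V(\lam)$ as $q\to\infty$ and that, by definition,
$$
P_\lam=\sum_{\mu\in\Xveep}q^{-\langle\rho,\lam+\mu\rangle}L_{\lam,\mu}\,m_\mu .
$$
Since the $m_\mu$ with $\mu\in\Xveep$ are linearly independent, the coefficient of $m_\mu$ in $s_\lam$ equals $\lim_{q\to\infty}q^{-\langle\rho,\lam+\mu\rangle}L_{\lam,\mu}(q)$, so everything comes down to extracting the leading term of $L_{\lam,\mu}(q)$.

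First I would invoke Theorem~\ref{Lpolynomialformula} together with the analysis in the proof of Proposition~\ref{dimensioninequality}: the polynomial $L_{\lam,\mu}(q)$ is a sum over the positively folded galleries $\delta\in\Gamma^+(\gamma_\lam,\mu)$, and the contribution of each such $\delta$ has top degree $\dim C_\delta=\sharp^+\delta$ with leading coefficient $+1$ (geometrically, the minimal galleries form an open dense subset of the affine cell $C_\delta$, so the point count over $\bF_q$ is $q^{\dim C_\delta}$ minus lower order terms). By Proposition~\ref{dimensioninequality} one has $\sharp^+\delta\le\langle\lam+\mu,\rho\rangle$, with equality precisely for the LS-galleries. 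Hence in $q^{-\langle\rho,\lam+\mu\rangle}L_{\lam,\mu}(q)$ every non-LS gallery contributes a term that vanishes as $q\to\infty$, while each LS-gallery ending in $\mu$ contributes exactly $1$; because all these leading coefficients are positive there is no cancellation, and
$$
\lim_{q\to\infty}q^{-\langle\rho,\lam+\mu\rangle}L_{\lam,\mu}(q)=\#\{\,\text{LS-galleries of type }t_{\gamma_\lam}\text{ with target }\mu\,\}.
$$
This identifies, for every dominant $\mu$, the coefficient of $m_\mu$ in $\chara V(\lam)$ with the number of LS-galleries ending in $\mu$.

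Finally I would pass from the monomial basis and dominant targets to the asserted sum over all LS-galleries. Writing $m_\mu=\sum_{\nu\in W\mu}e^\nu$ and using the $W$-invariance of $\chara V(\lam)$, it suffices to know that the number of LS-galleries with target $\nu$ depends only on the $W$-orbit of $\nu$. This $W$-symmetry is the only input not already contained in the two cited results; I would obtain it from the identification of LS-galleries with LS-paths in Corollary~\ref{gallerpathcoro}, where the action of the Weyl group (via the root operators of the path model) permutes endpoints within a $W$-orbit with equal multiplicities. Combining this symmetry with the dominant-weight count then gives $\chara V(\lam)=\sum_\delta e^{target(\delta)}$, the sum ranging over all LS-galleries of type $t_{\gamma_\lam}$. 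The main obstacle is exactly this last step: the definitions of ``positively folded'' and of $\sharp^+$ are tied to the antidominant chamber and are not manifestly $W$-equivariant, so the equality of LS-gallery counts across a $W$-orbit has to be imported from the crystal/path structure rather than deduced naively from a Weyl group action on galleries.
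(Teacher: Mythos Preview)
Your approach is essentially identical to the paper's: extract the leading coefficient of $L_{\lam,\mu}(q)$ via Theorem~\ref{Lpolynomialformula} and Proposition~\ref{dimensioninequality}, then pass to the limit $q\to\infty$ in $P_\lam$ to identify that coefficient with the multiplicity of $m_\mu$ in $s_\lam$. The paper's proof is just these two sentences.

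Where you differ is in worrying about the last step, and you are right to do so. The argument via $P_\lam\to s_\lam$ only matches the LS-gallery count with $\dim V(\lam)_\mu$ for \emph{dominant} $\mu$, since the $L_{\lam,\mu}$ are defined through the expansion of $P_\lam$ in the monomial basis. Passing to the stated identity $\chara V(\lam)=\sum_\delta e^{target(\delta)}$ over \emph{all} LS-galleries does require knowing that the number of LS-galleries with target $\nu$ is constant along $W$-orbits. The paper's short proof glosses over this; your proposed fix through Corollary~\ref{gallerpathcoro} and the crystal structure of the path model is exactly the right patch, and it is consistent with the sentence preceding the corollary, where the authors say that \emph{in combination with} Corollary~\ref{gallerpathcoro} one obtains the path character formula. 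So your proof is the paper's proof made honest about the final step.
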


\proof
The formula in Theorem~\ref{Lpolynomialformula} and the results above 
show that the highest power of $q$ in the Laurent polynomial $L_{\lam,\mu}$ 
is $\langle\lam+\mu,\rho\rangle$, and the coefficient of the highest power is the number
of LS-galleries having $\mu$ as a target. Since $P_\lam\rightarrow s_\lam$ for $q\rightarrow\infty$,
this proves the character formula.
\qed
\medskip

Let us now consider some of the special cases discussed in section~\ref{sec:increaseconcat},
these cases occur already in \cite{LMS}. The question why for some enumeration of the fundamental
weights the combinatorics for tableaux becomes suddenly much easier than for other enumerations
seems to have a geometric answer: because for special orderings locally minimal and globally
minimal are equivalent conditions for one-skeleton galleries.

\subsection{LS-tableaux and LS-galleries}

It remains to describe the semi-standard tableaux corresponding to LS-galleries, we call these LS-tableaux. Since the condition of being folded by a sequence of LS-folds is equivalent to the condition for LS-paths,
these tableaux can be found in \cite{L2}, we refer here to a slightly different but equivalent description by Lakshmibai.
\begin{proposition}
\begin{itemize}
\item[{\it i)}] In type ${\tt A}_n$ all semistandard tableaux are LS-tableaux.
\item[{\it ii)}] In type ${\tt B}_n$ a  semistandard tableau $\mathcal T$ is an LS-tableau if and only if 
the following holds: each pair of columns $(C_1,C_2)$ corresponding to a gallery for a non-minuscule weight satisfies the conditions for an admissible pair in Proposition B1 of \cite{Lakshmi86}.

\item[{\it iii)}] In type ${\tt C}_n$ a semistandard tableau $\mathcal T$ is an LS-tableau if and only if 
the following holds: each pair of columns $(C_1,C_2)$ corresponding to a gallery for a non-minuscule weight satisfies the conditions for an admissible pair in Proposition C1 of \cite{Lakshmi86}.

\end{itemize}
\end{proposition}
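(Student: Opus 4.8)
The plan is to reduce the LS-property of a whole tableau to a purely local condition on its fundamental-weight blocks, and then to identify that local condition with Lakshmibai's admissible-pair condition. First I would use Proposition~\ref{gallerytableau} together with Proposition~\ref{semisyoungtableau} to realize a semistandard tableau $\mathcal T$ as the concatenation $\delta=\delta_1*\cdots*\delta_N$ of positively folded galleries, each $\delta_j$ of the same type as some $\gamma_{\om}$ with $\om$ a fundamental coweight in $\supp\lam$. By Lemma~\ref{minusculegallery}(2) the concatenation is an LS-gallery if and only if each factor $\delta_j$ is an LS-gallery. Hence $\mathcal T$ is an LS-tableau precisely when every one-column block (minuscule $\om$) and every two-column block (non-minuscule $\om$) corresponds to an LS-gallery; the LS-property is genuinely local on the fundamental pieces.

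Next I would dispose of the minuscule blocks and of type ${\tt A}_n$. By Lemma~\ref{minusculegallery}(1) every gallery of the same type as a minuscule $\gamma_\om$ is automatically an LS-gallery, so a minuscule column never obstructs the LS-property. In type ${\tt A}_n$ all fundamental coweights are minuscule, so every positively folded gallery of type $\gamma_{\underline\lam}$ is an LS-gallery; combined with Proposition~\ref{semisyoungtableau} this shows that every semistandard tableau is an LS-tableau, which is exactly $(i)$. The same observation removes the minuscule columns ($\om_n$ in type ${\tt B}_n$, $\om_1$ in type ${\tt C}_n$) from any further consideration in $(ii)$ and $(iii)$.

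It then remains to analyse a single non-minuscule block. Here $\gamma_\om$ has the two-edge shape of Equation~(\ref{nonmini}), with interior vertex $V=\frac12\om$, and the associated two-column tableau records the pair of weights $(\tau(\om),\tau\sigma(\om))$, i.e. the two columns $(C_1,C_2)$. By Corollary~\ref{pathgalleryfundamental} the LS-galleries of the same type as $\gamma_\om$ are exactly the galleries coming from LS-paths of shape $\om$; equivalently, by the LS-fold criterion established just before that corollary, they are the galleries whose single fold at $V$ is reached by a chain of LS-folds, that is, reflections each of which lowers the Bruhat length in $W^v_V/(W^v_V)_{\nu_0}$ by exactly one. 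This is precisely the defining property of an admissible pair $(\sigma,\tau)$ in the sense of Lakshmibai and Seshadri. So $(ii)$ and $(iii)$ reduce to translating this chain condition, under the encoding of Section~\ref{Youngcombi}, into the column conditions of Propositions~B1 and~C1 of \cite{Lakshmi86}.

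The main obstacle is exactly this last translation, matching two a priori different combinatorial descriptions of the same admissible pairs. Concretely I would take an LS-fold chain for the local Weyl group $W^v_V$, which in the non-minuscule ${\tt B}_n$-case is of type ${\tt B}_i\times{\tt B}_{n-i}$ and in the ${\tt C}_n$-case of type ${\tt D}_i\times{\tt C}_{n-i}$ with the second factor acting trivially, and read off how the admissible sign changes relating $C_1$ and $C_2$ (a single change of signs in type ${\tt B}_n$, an even number of sign changes in type ${\tt C}_n$, exactly as in the tableau definitions) are constrained by the requirement that each reflection drop the length by one. Comparing this constraint step by step with the explicit inequalities defining admissible pairs in Propositions~B1 and~C1 of \cite{Lakshmi86} yields the equivalence; since the two formalisms differ only superficially, the verification is combinatorial bookkeeping rather than a new idea, and it is this bookkeeping that I expect to be the only delicate point.
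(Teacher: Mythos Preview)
Your proposal is correct and follows exactly the approach the paper itself takes: the paper does not give a detailed proof but simply notes (in the sentence preceding the proposition) that the LS-fold condition is equivalent to the LS-path condition and then refers to \cite{L2} and to Lakshmibai's description in \cite{Lakshmi86}. Your reduction via Lemma~\ref{minusculegallery} and Corollary~\ref{pathgalleryfundamental} makes explicit what the paper leaves implicit, and your identification of the remaining step as the combinatorial matching with Propositions~B1 and~C1 of \cite{Lakshmi86} is precisely what the paper outsources to the literature.
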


\begin{exam}
The tableau of type $\mathtt B_3$ in Example \ref{exTableaux} is semistandard but not LS.
\end{exam}


\bigskip
\emph{Acknowlegments.} We would like to thank Venkatraman Lakshmibai for helping us improving the manuscript. The first author would also like to thank Guy Rousseau for very useful comments and acknowledge financial support by the ANR as member of the project ANR-09-JCJC-0102-01. The second author acknowledge also financial support by the priority program SPP 1388 of the DFG. Both authors thank the Hausdorff research Institute for Mathematics for the hospitality during the Trimester Program ``On the interaction of representation theory with geometry and combinatorics''.

\end{document}